\documentclass[12pt]{amsart}

\usepackage{amsfonts}
\usepackage{amscd}
\usepackage{amssymb}
\usepackage{enumerate}
\allowdisplaybreaks

\usepackage{hyperref}


\vfuzz2pt 
\hfuzz2pt 
\newtheorem{thm}{Theorem}[section]
\newtheorem{cor}[thm]{Corollary}
\newtheorem{lem}[thm]{Lemma}

\newtheorem{prop}[thm]{Proposition}
\theoremstyle{definition}

\theoremstyle{remark}
\newtheorem{rem}[thm]{Remark}
\numberwithin{equation}{section}

\setlength{\textwidth 6.5in} \setlength{\textheight 9.0in}
\voffset -0.7in \hoffset -0.6in


\newcommand{\R}{\mathbb R}

\newcommand{\Na}{\mathbb N}

\newcommand{\C}{{\mathbb C}}

\newcommand{\pa}{\partial }

\renewcommand{\H}{\mathbb H}

\newcommand{\N}{\nabla }

\newcommand{\be}{\begin{equation}}
\newcommand{\ee}{\end{equation}}
\renewcommand{\Re}{\operatorname{Re}}
\renewcommand{\Im}{\operatorname{Im}}


\title[ Extension problem and trace Hardy inequality]
{An extension problem and trace Hardy inequality\\ for the sublaplacian on $H$-type groups}

\author[L. Roncal and S. Thangavelu]{Luz Roncal and Sundaram Thangavelu}

\address[L. Roncal]{BCAM - Basque Center for Applied Mathematics \\
Alameda de Mazarredo 14,
48009 Bilbao, Spain}
\email{lroncal@bcamath.org}

\address[S. Thangavelu]{Department of Mathematics\\
 Indian Institute of Science\\
560 012 Bangalore, India}
\email{veluma@math.iisc.ernet.in}

\keywords{$H$-type groups, extension problem, sublaplacian, Hardy's Inequality, Trace Hardy Inequality.}
\subjclass[2010]{Primary: 35R03. Secondary: 22E25, 22E46, 35C15, 35J25.}


\thanks{L. R. was supported by grant
MTM2015-65888-C4-4-P
from Spanish Government, by the Basque Government through the
BERC 2014--2017 program, by Spanish Ministry of Economy and Competitiveness MINECO:
BCAM Severo Ochoa excellence accreditation SEV-2013-0323 and by Leonardo Foundation grant for Investigadores y Creadores Culturales 2017 from BBVA. S. T. was supported by J. C. Bose Fellowship from D. S. T., Government of India}

\begin{document}

\maketitle
\begin{abstract}
In this paper we study the extension problem for the sublaplacian on a $H$-type group and use the solutions to prove trace Hardy and Hardy inequalities for fractional powers of the sublaplacian.
\end{abstract}

\section{Introduction and main results}

Ever since Caffarelli and Silvestre \cite{Caffarelli-Silvestre} studied the extension problem associated to the Laplacian on $\R^n$ and realised the fractional power $(-\Delta)^{s/2}$ as the map taking Dirichlet data to the Neumann data, there has been a flurry of activities related to the extension problem. Fractional powers of Laplacians also occur naturally in conformal geometry and scattering theory. Indeed, as shown in the work of Chang-Gonz\'alez \cite{CG}, the fractional order Paneitz operators $P_{\gamma}$ arising in the work of Graham and Zworski \cite{GZ} in conformal geometry coincide with $(-\Delta)^{\gamma}$ when the conformally compact Einstein manifold is taken to be the hyperbolic space. In \cite{CG}, Chang-Gonz\'alez have also extended the definition of $(-\Delta)^{\gamma}$ for $\gamma\in (0,n/2)$. The main idea used in \cite{CG} is to relate the extension problem for the Laplacian on $\R^n$ to the scattering theory for the Laplace--Beltrami operator on the hyperbolic space $X=\R^{n+1}_+$ endowed with the hyperbolic metric $g_X=\frac{dy^2+d|x|^2}{y^2}$, $y>0$, $x\in \R^n$.

Not very long after the appearance of the work of Chang and Gonz\'alez, Frank et al \cite{FGMT} have studied the extension problem associated to the sublaplacian $\mathcal{L}$ on the Heisenberg group $\H^n$. Due to the fact that CR manifolds serve well as abstract models of real submanifolds of complex manifolds, there is a vast literature on CR geometry and analysis on CR manifolds. The role played by $\R^n$ in the case of conformal geometry is now played by the Heisenberg group. In this connection it has been observed that conformally invariant fractional powers of the sublaplacian, denoted by $\mathcal{L}_s$, are more relevant than the pure fractional powers $\mathcal{L}^s$, see \cite{BFM, FL}. In their work, Frank et al \cite{FGMT} have studied construction of CR covariant operators of fractional order on the Heisenberg group $\H^n$ and investigated how they may be constructed as the Dirichlet-to-Neumann operator associated to a degenerate elliptic equation in the spirit of Caffarelli-Silvestre.

The extension problem for the sublaplacian on $\H^n$ takes the form
\begin{equation}
\label{eq:epH}
\big(\partial_{\rho}^2+\frac{1-2s}{\rho}\partial_{\rho}+\frac14\rho^2\partial_t^2-\mathcal{L}\big)U=0 \qquad \text{ in } \H^n\times \R^+,
\end{equation}
with boundary condition $U(z,t,0)=f(z,t)$, $(z,t)\in \H^n$. Note that the extension problem is different from the usual problem due to the appearance of the extra term $\frac14\rho^2\partial_t^2$. When this extra term is absent, one can study the extension problem using the semigroup approach developed by Stinga-Torrea \cite{ST}, see also \cite{GMS}. However, if we consider $\H^n$ as the boundary of the Siegel's upper half space $\Omega_{n+1}$, then the above extension problem occurs naturally. Using this connection and making use of Fourier analysis on the Heisenberg group $\H^n$, Frank et al have shown that for $f\in C_0^{\infty}(\H^n)$ there is a unique solution of the above equation which satisfies 
$$
\mathcal{L}_sf=c_s\lim_{\rho\to0} \rho^{1-2s}\partial_{\rho}U.
$$
They have also proved an interesting trace inequality for the restriction map $T$ which takes functions $U$ on $\Omega_{n+1}$ into their boundary values on $\H^n$. In establishing their results, they have made use of results from scattering theory. 

In a recent article, M\"ollers et al \cite{MOZ} have looked at the extension problem associated to $\Delta$ on $\R^n$ and $\mathcal{L}$ on $\H^n$ in the light of representation theory. Realising that there are Lie groups of symmetries acting on the space of solutions of these boundary problems, they have related the solution operators (taking the boundary value into the solution) with symmetry-breaking operators constructed in the work of Kobayashi-Speh \cite{KS} and M\"ollers, \O rsted and Oshima \cite{MOO}. The Lie group relevant to the case of the extension problem for the sublaplacian $\mathcal{L}$ is the rank one real reductive group $U(1,n+2)$ and the solution operators which they call the Poisson transforms and denote by $P_s$, are related to the complementary series representations of this group.

M\"ollers et al \cite{MOZ} have taken the point of view that solutions $U(z,t,\rho)$ of the extension problem \eqref{eq:epH} can be considered as functions on the higher dimensional group $\H^{n+1}$ which are radial in the extra variable. Using coordinates $(z,\zeta,t)\in \C^n\times \C\times \R$ on $\H^{n+1}$ they have considered the operator
\begin{equation}
\label{eq:Ls}
L_s:=-|\zeta|^2\mathcal{L}+(1-2s)\big(\xi\frac{\partial}{\partial \xi}+\eta\frac{\partial}{\partial \eta}\big)
\end{equation}
where
$\zeta=\xi+i\eta$ and $\mathcal{L}$ is the sublaplacian on $\H^{n+1}$. The action of this operator on functions which are radial in the $\zeta$ variable leads to the operator
$$
-\mathcal{L}+\partial_{\rho}^2+\frac{1-2s}{\rho}\partial_{\rho}+\frac14\rho^2\partial_t^2
$$
which is related to the operator studied by Frank et al in \cite{FGMT}. The boundary value problem associated to the above operator can be solved explicitly giving the solution as a convolution of the boundary condition with a kernel known as the Poisson kernel. The complementary series representations $\pi_{-s}$ are realised on certain Hilbert spaces $H^s(\H^n)$ defined in terms of the fractional powers $\mathcal{L}_s$. Using representation theoretic arguments, M\"ollers et al \cite{MOZ} show that the solution operator $P_s$ is an isometry between $H^s(\H^n)$ and $H^{s+1}(\H^{n+1})$. 

In this article we revisit the extension problem in the more general context of $H$-type groups. Using techniques different from those used by Frank et al in \cite{FGMT} and M\"ollers et al in \cite{MOZ} we calculate explicitly the kernel associated to the solution operator $P_s$. This allows us to prove that the Dirichlet-to-Neumann map can be defined not only for $0<s<n+1$, as studied by Frank et al, but also for all $s>0$ save for a discrete set of forbidden values. By making use of the connection between the extension problem and the eigenfunction equation for the Laplace--Beltrami operator associated to the solvable extension $S$ of the given $H$-type group $N$, we characterise all solutions of the extension problem satisfiying uniform $L^p$ integrability, see Theorem \ref{thm:cha}. Moreover, by making use of the extension problem, we prove a trace Hardy inequality for the sublaplacian, see Theorem \ref{thm:GtHi}. Such a technique was already used in the Euclidean context in \cite{FMT, FMT2, Nguyen,Tz}. The trace Hardy inequality leads to a Hardy inequality with homogeneous weight function, see Corollary \ref{thm:Hardyhom}, which turns out to be sharp.

Let $ N $ be a $H$-type group whose  Lie algebra $ \mathfrak{n} $  is the direct sum of $ \mathfrak{v}$ and $ \mathfrak{z}.$ Here $ \mathfrak{z} $ is at the centre of the Lie algebra $\mathfrak{n}$ and $ \mathfrak{v} $ is even dimensional. Let $2n$ and $m$ denote the dimensions of $\mathfrak{v}$ and $\mathfrak{z}$ respectively. Let $ X_j$, $j = 1,2,...,2n $  and $ Z_j$, $j =1,2,...,m $ be bases for $ \mathfrak{v} $ and $ \mathfrak{z} $ respectively. The sublaplacian $ \mathcal{L} $ on $ N $ is defined by $ \mathcal{L} = -\sum_{j=1}^{2n} X_j^2 $, which is known to be a subelliptic operator. We are concerned with the extension problem
\begin{equation}
\label{eq:ep}
 \big( -\mathcal{L}+\partial_\rho^2 +\frac{1-2s}{\rho}\partial_\rho +\frac{1}{4}\rho^2 \Delta_z \big)u(v,z,\rho) = 0,  \qquad \lim_{\rho \rightarrow 0} u(v,z,\rho) = f(v,z)
\end{equation}
where $ \Delta_z = \sum_{k=1}^m Z_k^2 $ stands for the Laplacian on $ \R^m$.

We study solutions of this equation by relating them to eigenfunctions of the Laplace--Beltrami operator $\Delta_S$ on the solvable extension $S$ of the $H$-type group $N$. Recall that the $H$-type group $N$ admits nonisotropic dilations. Thus there is an action of $A=\R^+$ on $N$. We can therefore form the semidirect product of $N$ and $A$ which is usually denoted by $S=AN$. This group $S$ is solvable and when $N$ is an Iwasawa group coming out of a semisimple Lie group, $S$ can be identified with a noncompact Riemannian symmetric space of rank one. 

A basis for the Lie algebra $ \mathfrak{s} $ of $S$ is given by $ E_j = \sqrt{\rho}X_j$, $j=1,2,...,2n$, $T_k = \rho Z_k$, $k =1,2,...,m $ and $ H = \rho \partial_\rho$. The Laplace--Beltrami operator $\Delta_S$ on $S$ is defined by  
\begin{equation}  
\label{eq:LBel}
\Delta_S =  \sum_{j=1}^{2n} E_j^2 +\sum_{k=1}^m T_k^2 +H^2 -\frac{1}{2}Q H
\end{equation} 
where $ Q = 2(n+m) $ is the homogeneous dimension of $ N$. It can be shown that when $u$ satisfies the extension problem \eqref{eq:ep}, the function 
$$
\widetilde{w}(v,z,\rho)=\rho^{\frac{n+m-s}{2}}w(v,z,\rho)=\rho^{\frac{n+m-s}{2}}u(2^{-1/2}v,2^{-1}z,\sqrt{2z})
$$
satisfies the eigenfunction equation
$$
-\Delta_{S}\widetilde{w}(v,z,\rho)=\frac{(n+m)^2-s^2}{4}\widetilde{w}(v,z,\rho).
$$
Using this connection and known results on characterisations of eigenfunctions of the Laplace--Beltrami operator on $S$ we can prove the following theorem. 

Let 
\begin{equation} 
\label{eq:varfisr}
\varphi_{s,\rho}(v,z)  = \big((\rho^2+|v|^2)^2 +16 |z|^2\big)^{-\frac{n+m+s}{2}},
\end{equation} 
which is integrable on $N$ for all $s>0$. Let us also denote by $\ast$ the convolution on the group $N$. Thus for two functions $ f $ and $ g $ on $ N $ the convolution $ f \ast g $ is defined by
$$ f \ast g(x) = \int_{N} f(xy^{-1}) g(y) dy $$
where $ dy $ stands for the Haar measure on $ N$. 

\begin{thm}
\label{thm:cha}
Let $s>0$. Let $u$ be any solution of the equation
$$
 \big( -\mathcal{L}+\partial_\rho^2 +\frac{1-2s}{\rho}\partial_\rho +\frac{1}{4}\rho^2 \Delta_z \big)u(v,z,\rho) = 0
$$
and let $1\le p\le \infty$. Then $u$ satisfies the uniform estimates
\begin{equation*}
\int_N |u(v,z,\rho)|^p \,dv\, dz \leq  C, \qquad \rho > 0
\end{equation*}
if and only if $u=c\rho^{2s}f\ast \varphi_{s,\rho}$ for some $f\in L^p(N)$, $1<p\le \infty$. When $p=1$, $u=c\rho^{2s}\mu\ast \varphi_{s,\rho}$, for a complex Borel measure $\mu$. Moreover, when $1<p<\infty$, $u\to c' f$ in $L^p(N)$ as $\rho\to0$ (for $p=1$, $u\to c' \mu$ weakly as $\rho\to0$). 
\end{thm}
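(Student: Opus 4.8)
The plan is to use the correspondence recalled above between solutions of \eqref{eq:ep} and eigenfunctions of the Laplace--Beltrami operator $\Delta_S$ on the solvable extension $S=AN$. Given a solution $u$, the associated function $\widetilde w$ introduced above satisfies $-\Delta_S\widetilde w=\frac14((n+m)^2-s^2)\widetilde w$, so it is an eigenfunction whose spectral parameter is tied to $s$. The first step is to translate the hypothesis $\int_N|u(v,z,\rho)|^p\,dv\,dz\le C$: unwinding the nonisotropic dilation $(v,z)\mapsto(2^{-1/2}v,2^{-1}z)$ and the factor $\rho^{(n+m-s)/2}$ built into $\widetilde w$ turns it into a uniform $L^p$ bound for $\widetilde w$ on the level sets of $\rho$, weighted by a fixed power of $\rho$. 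This places $\widetilde w$ in precisely the class of eigenfunctions to which the known Poisson-integral characterizations apply.

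For the implication that every such $u$ has the stated form, I would invoke the characterization of $L^p$-type eigenfunctions of $\Delta_S$ as Poisson transforms: on the rank-one space $S=AN$ an eigenfunction for the eigenvalue $\frac14((n+m)^2-s^2)$ obeying a uniform $L^p$ growth condition is the Poisson transform of a boundary datum on $N$, which lies in $L^p(N)$ when $1<p\le\infty$ and is a complex Borel measure when $p=1$. The endpoint distinction is forced: a family bounded in $L^1(N)$ has only weak-$*$ limits among measures, whereas reflexivity of $L^p(N)$ for $1<p<\infty$ yields a weak limit in $L^p(N)$. Transporting this representation back through the change of variables, and using that the Poisson kernel for $\Delta_S$ at this eigenvalue equals, up to a constant and the factor $\rho^{2s}$, the explicit kernel $\varphi_{s,\rho}$ of \eqref{eq:varfisr}, produces $u=c\rho^{2s}f\ast\varphi_{s,\rho}$ (respectively $c\rho^{2s}\mu\ast\varphi_{s,\rho}$).

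The reverse inclusion and the boundary convergence both reduce to showing that $\rho^{2s}\varphi_{s,\rho}$ is an approximate identity on $N$. With the dilations $\delta_r(v,z)=(rv,r^2z)$ one has the scaling identity $\varphi_{s,\rho}(v,z)=\rho^{-2(n+m+s)}\varphi_{s,1}(\delta_{1/\rho}(v,z))$, and since the Haar measure transforms with Jacobian $\rho^{Q}$, $Q=2(n+m)$, it follows that $\int_N\rho^{2s}\varphi_{s,\rho}\,dv\,dz=\int_N\varphi_{s,1}\,dv\,dz$ is a finite constant independent of $\rho$, with mass concentrating at the identity as $\rho\to0$. Young's inequality for convolution on $N$ then gives the uniform bound $\|u(\cdot,\rho)\|_{L^p(N)}\le C\|f\|_{L^p(N)}$, and standard approximate-identity theory gives $u\to c'f$ in $L^p(N)$ for $1<p<\infty$, together with the weak-$*$ statements in the cases $p=1$ and $p=\infty$.

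The main obstacle is the converse (representation) direction: it is not elementary and rests entirely on the correct Fatou/Poisson-integral characterization for eigenfunctions of $\Delta_S$ under uniform $L^p$ control, combined with the explicit evaluation of the associated Poisson kernel. Care is needed to match the eigenvalue $\frac14((n+m)^2-s^2)$ to the index $s$, to keep track of the constants $c$ and $c'$ generated by the change of variables and the kernel normalization, and to handle the endpoint $p=1$ through measures rather than $L^1$ functions.
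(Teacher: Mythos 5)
Your proposal is correct and follows essentially the same route as the paper: the authors also pass from $u$ to $\widetilde{w}(v,z,\rho)=\rho^{\frac{n+m-s}{2}}u(2^{-1/2}v,2^{-1}z,\sqrt{2\rho})$, note that the uniform $L^p$ bound becomes exactly the growth condition $\big(\int_N|\widetilde{w}|^p\big)^{1/p}\le C\rho^{\frac{n+m-s}{2}}$, invoke the Poisson-transform characterisation of eigenfunctions of $\Delta_S$ (Ben Sa\"id--Oshima--Shimeno for Iwasawa $N$, Damek--Kumar in general) with eigenvalue $-\frac14\big((n+m)^2-s^2\big)$, and handle the easy direction and the boundary convergence via the scaling identity $\rho^{2s}\varphi_{s,\rho}(v,z)=\rho^{-2(n+m)}\varphi_{s,1}(\rho^{-1}v,\rho^{-2}z)$, Young's inequality, and the approximate-identity property. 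Your endpoint discussion ($p=1$ via weak-$*$ limits of measures) matches the intended treatment, and your scaling computation is accurate.
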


Later we will show that $u=C_1(n,m,s)\rho^{2s}f\ast \varphi_{s,\rho}$ converges to $f$ in $L^p$, $1\le p<\infty$ as $\rho\to0$, where
\begin{equation}
\label{eq:C1}
C_1(n,m,s)=\frac{4^m}{\pi^{n+m/2}}\frac{\Gamma(n+s)\Gamma\big(\frac{n+m+s}{2}\big)}{\Gamma(s)\Gamma(\frac{n+s}{2})}.
\end{equation}
 In the case of the Laplacian $\Delta$ on $\R^n$, the function 
$$
u(x,\rho)=C\rho^s\int_{\R^n}f(x-y)(\rho^2+|y|^2)^{-\frac{n+s}{2}}\,dy
$$
solves the extension problem associated to the Laplacian with initial condition $f$. Moreover, it has been proved (see for instance \cite{Caffarelli-Silvestre}) that 
$$
-\lim_{\rho\to0}\rho^{1-s}\partial_{\rho}u(x,\rho)=C_s(-\Delta)^{s/2}f(x).
$$
In a similar way we can obtain conformally invariant fractional powers $\mathcal{L}_s$ of the sublaplacian as the Dirichlet-to-Neumann map associated to the extension problem \eqref{eq:ep}. 

Before stating our result, let us recall the definition of $\mathcal{L}_s$ in the context of $H$-type groups (see \cite{RT} for the case of Heisenberg groups). The operator $\mathcal{L}$ is self adjoint and admits an explicit spectral resolution. In fact, $\mathcal{L}$ commutes with $\Delta_z$ and hence there is a joint spectral theory of $\mathcal{L}$ and $\Delta_z$. Using this, the operator $\mathcal{L}_s$ is simply defined by 
$$
\mathcal{L}_s=2^s(-\Delta_z)^{s/2}\frac{\Gamma\big(\frac{\mathcal{L}(-\Delta_z)^{-1/2}+1+s}{2}\big)}{\Gamma\big(\frac{\mathcal{L}(-\Delta_z)^{-1/2}+1-s}{2}\big)}.
$$
In view of Stirling's formula for the Gamma function we see that $\mathcal{L}_s$ is essentially the pure power  $\mathcal{L}^s$. However, as explained in \cite{RT}, it is more convenient to work with $\mathcal{L}_s$ rather than $\mathcal{L}_s$. Moreover, it has the ``conformal invariance'', see e.g. (1.9) in \cite{FGMT}.

We now have the following theorem which obtains $\mathcal{L}_sf$ as the Dirichlet-to-Neumann map associated to the extension problem.

\begin{thm}
\label{thm:sol}
Let $f\in L^p(N)$, $1\le p<\infty$. Then, as $\rho\to0^+$,
\begin{equation}
\label{eq:convergfT}
u=C_1(n,m,s)\rho^{2s}f\ast \varphi_{s,\rho} \rightarrow  f \qquad \text{ in } L^p(N),
\end{equation}
where $\varphi_{s,\rho}$ is defined in \eqref{eq:varfisr} and $C_1(n,m,s)$ is the one in \eqref{eq:C1}.
 If we further assume that $\mathcal{L}_sf\in L^p(N)$ then
\begin{equation} 
\label{eq:limits}
-\lim_{\rho\to 0^+}\rho^{1-2s} \partial_\rho (u(v,z,\rho)) = 2^{1-2s}\frac{\Gamma(1-s)}{\Gamma(s)} \mathcal{L}_s f(v,z).\end{equation}
Moreover, when $0<s<1/2$, we also have the pointwise representation
\begin{equation}
\label{eq:ir}
\mathcal{L}_sf(x)=\frac{4^{m+s} }{\pi^{n+m/2}}\frac{\Gamma(n+s)\Gamma\big(\frac{n+m+s}{2}\big)}{\Gamma\big(\frac{n+s}{2}\big)|\Gamma(-s)|}\int_{N}\frac{f(x)-f(y)}{|xy^{-1}|^{Q+2s}}\,dy
\end{equation}
for all $f\in C^1(N)$ such that $X_jf, Z_kf\in L^{\infty}(N)$, $j=1,\ldots, 2n$, $k=1,\ldots, m$.
Finally, for $0<s<1/2$, the following limit also exists in the $L^p(N)$ sense:
\begin{equation}
\label{eq:limit2}
-\lim_{\rho\to0^+}\frac{u(v,z,\rho)-f(v,z)}{\rho^{2s}} =\frac{|\Gamma(-s)|}{4^s\Gamma(s)}\mathcal{L}_s f(v,z).
\end{equation}
\end{thm}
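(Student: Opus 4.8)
The plan is to take as given, from Theorem~\ref{thm:cha}, that every uniformly $L^p$-bounded solution is $u=C_1(n,m,s)\rho^{2s}f\ast\varphi_{s,\rho}$, and to extract all four conclusions from this representation together with the homogeneity of $\varphi_{s,\rho}$. For the convergence \eqref{eq:convergfT} I would first record the scaling identity $\varphi_{s,\rho}(\delta_\rho(v,z))=\rho^{-(Q+2s)}\varphi_{s,1}(v,z)$ under the nonisotropic dilations $\delta_\rho(v,z)=(\rho v,\rho^2 z)$, which gives $\int_N C_1\rho^{2s}\varphi_{s,\rho}=C_1\int_N\varphi_{s,1}$ independently of $\rho$. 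I would then verify that the constant $C_1$ in \eqref{eq:C1} is exactly $\bigl(\int_N\varphi_{s,1}\bigr)^{-1}$ (a Beta-type integral), so that $\{C_1\rho^{2s}\varphi_{s,\rho}\}_{\rho>0}$ is an $L^1$-normalized dilation family, i.e.\ an approximate identity on $N$. Convergence $u\to f$ in $L^p(N)$, $1\le p<\infty$, then follows from the standard approximate-identity argument (continuity of translations and density of $C_c(N)$).

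The crux is the Dirichlet-to-Neumann formula \eqref{eq:limits}. I would pass to the group Fourier transform, which simultaneously diagonalizes $\mathcal L$ and $\Delta_z$: on the spectral component indexed by $\lambda\in\R^m\setminus\{0\}$ and $k\in\Na$, $\mathcal L$ acts as $(2k+n)|\lambda|$ and $-\Delta_z$ as $|\lambda|^2$. Equation \eqref{eq:ep} then becomes the ordinary differential equation $\hat u''+\frac{1-2s}{\rho}\hat u'-\bigl((2k+n)|\lambda|+\tfrac14|\lambda|^2\rho^2\bigr)\hat u=0$, a confluent-hypergeometric (Whittaker) equation whose indicial exponents at $\rho=0$ are $0$ and $2s$. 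The solution singled out by the extension problem (equivalently, the transform of $\varphi_{s,\rho}$) decays like $e^{-|\lambda|\rho^2/4}$ at infinity and has the local form $\hat u=\hat f\,(1+O(\rho^2))+B(\lambda,k)\,\rho^{2s}(1+O(\rho^2))$. Applying $\rho^{1-2s}\partial_\rho$ annihilates the analytic part (for $0<s<1$), so $-\lim_{\rho\to0}\rho^{1-2s}\partial_\rho\hat u=-2s\,B(\lambda,k)$. It then remains to evaluate the connection coefficient $B/\hat f$ from the small-argument expansion of the Tricomi $U$-function; this yields a quotient of Gamma functions which, after substituting $|\lambda|\mapsto(-\Delta_z)^{1/2}$ and $2k+n\mapsto\mathcal L(-\Delta_z)^{-1/2}$, matches the symbol of $\mathcal L_s$ times $2^{1-2s}\frac{\Gamma(1-s)}{\Gamma(s)}$. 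Inverting the transform gives \eqref{eq:limits}.

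For the remaining two assertions I would work directly with the kernel. Using the normalization $C_1\rho^{2s}\int_N\varphi_{s,\rho}=1$, write $\frac{f(x)-u(x)}{\rho^{2s}}=C_1\int_N\bigl(f(x)-f(xy^{-1})\bigr)\varphi_{s,\rho}(y)\,dy$. Since $\varphi_{s,\rho}(y)\uparrow|y|^{-(Q+2s)}$ with $|y|=(|v|^4+16|z|^2)^{1/4}$, and $|f(x)-f(xy^{-1})|=O(|y|)$ by the bounded horizontal derivatives, the integrand is dominated by $\min(2\|f\|_\infty,C|y|)|y|^{-(Q+2s)}$, which is integrable on $N$ precisely when $0<s<1/2$. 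Dominated convergence, the change of variables $y\mapsto xy^{-1}$, and the symmetry $|g|=|g^{-1}|$ then give $-\lim_{\rho\to0}\frac{u-f}{\rho^{2s}}=C_1\int_N\frac{f(x)-f(y)}{|xy^{-1}|^{Q+2s}}\,dy$ pointwise (and, after an $L^p$-uniform-integrability upgrade, in $L^p$). To identify this limit with a multiple of $\mathcal L_s f$ I would write $u(\rho)-f=\int_0^\rho(\rho')^{2s-1}\bigl[(\rho')^{1-2s}\partial_{\rho'}u\bigr]\,d\rho'$; since the bracket tends to $-2^{1-2s}\frac{\Gamma(1-s)}{\Gamma(s)}\mathcal L_s f$ by \eqref{eq:limits}, a Cesàro/L'Hôpital argument yields $-\lim\frac{u-f}{\rho^{2s}}=\frac{1}{2s}2^{1-2s}\frac{\Gamma(1-s)}{\Gamma(s)}\mathcal L_s f=\frac{|\Gamma(-s)|}{4^s\Gamma(s)}\mathcal L_s f$ (using $\Gamma(1-s)=s|\Gamma(-s)|$), which is \eqref{eq:limit2}. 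Equating the two expressions for the same limit and solving for $\mathcal L_s f$ produces \eqref{eq:ir}, the prefactor $\frac{4^s\Gamma(s)}{|\Gamma(-s)|}C_1$ collapsing exactly to the stated constant.

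The main obstacle is the constant in \eqref{eq:limits}: producing the exact Gamma-function normalization of the Dirichlet-to-Neumann map. Whether one proceeds via the ODE (extracting the ratio of the two Frobenius coefficients of the Whittaker solution through the connection formula for $U$) or via the explicit kernel (where $\rho^{1-2s}\partial_\rho(\rho^{2s}f\ast\varphi_{s,\rho})$ decomposes into the two individually singular pieces $2s\,f\ast\varphi_{s,\rho}$ and $\rho\,f\ast\partial_\rho\varphi_{s,\rho}$ that must be recombined to a finite limit), the delicate point is tracking the factors $2^{1-2s}$, $4^m$ and the several Gamma quotients and confirming that they assemble into the symbol of $\mathcal L_s$. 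A secondary technical issue is justifying the interchange of limit and integral strongly enough to upgrade the pointwise identity \eqref{eq:ir} to the $L^p$ convergence in \eqref{eq:limit2}; both, together with the absolute convergence of the singular integral, are what confine these last two statements to the range $0<s<1/2$.
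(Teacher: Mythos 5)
Your plan reaches all four conclusions, but by a genuinely different route at the central step, so let me compare before flagging the one real gap. For \eqref{eq:convergfT} you and the paper agree in substance: the paper gets solutionhood and the normalisation from a Stinga--Torrea-type formula built out of the heat kernels of $\mathcal{L}$ and a generalised sublaplacian (Theorem \ref{thm:STf}), identifies the resulting kernel $\Phi_{s,1}$ with $C_1\varphi_{s,1}$ by a Radon-transform argument (Theorem \ref{thm:Phi1}), and computes $C_1=(\int_N\varphi_{s,1})^{-1}$ exactly as you propose (Lemma \ref{lem:I}). For \eqref{eq:limits}, however, the paper never goes to the Fourier side: it invokes the Cowling--Haagerup identity $\mathcal{L}_s\varphi_{-s,\rho}=C_2(n,m,s)\rho^{2s}\varphi_{s,\rho}$ (Theorem \ref{thm:CH}) to rewrite the solution as $u=C_1C_2^{-1}\,\mathcal{L}_sf\ast\varphi_{-s,\rho}$, so that $-\rho^{1-2s}\partial_\rho u=C_1C_2^{-1}\,\mathcal{L}_sf\ast K_{s,\rho}$ with $K_{s,\rho}=-\rho^{1-2s}\partial_\rho\varphi_{-s,\rho}$ a single kernel with the correct dilation structure, hence an approximate identity; Lemma \ref{lem:I} then yields the constant. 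Your Whittaker/Tricomi computation is numerically consistent (indicial roots $0$ and $2s$, and with $b=1-s$, $a=\frac{2k+n+1-s}{2}$ the ratio of Frobenius coefficients is $\frac{\Gamma(-s)\Gamma(a+s)}{\Gamma(s)\Gamma(a)}(|\lambda|/2)^s$, which via $-2s\Gamma(-s)=2\Gamma(1-s)$ reassembles into $2^{1-2s}\frac{\Gamma(1-s)}{\Gamma(s)}$ times the symbol of $\mathcal{L}_s$), and it is close in spirit to \cite{RT} and \cite{FGMT}. Your order for the last two items is reversed relative to the paper, and is arguably cleaner: you deduce \eqref{eq:limit2} from \eqref{eq:limits} via $u(\cdot,\rho)-f=\int_0^\rho(\rho')^{2s-1}\bigl[(\rho')^{1-2s}\partial_{\rho'}u\bigr]\,d\rho'$ and averaging (an argument that, incidentally, does not need $s<1/2$), then get \eqref{eq:ir} by equating two expressions for the same limit; the paper proves \eqref{eq:ir} first, by differentiating the kernel identity and disposing of the extra term $\rho\int_N(f(x)-f(y))\partial_\rho\varphi_{s,\rho}(y^{-1}x)\,dy$ via the Folland--Stein mean value inequality (Theorem \ref{thm:FS}), and only then reads off \eqref{eq:limit2}. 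One small repair in your \eqref{eq:ir} step: $f\in L^\infty$ is not a hypothesis, so your dominant $\min(2\|f\|_\infty,C|y|)\,|y|^{-(Q+2s)}$ needs the remark that boundedness of $f$ follows from $f\in L^p$ together with the Lipschitz bound furnished by the bounded derivatives (the Folland--Stein bound $C(|y|+|y|^2)$ alone does not suffice at infinity, since $|y|^{2-Q-2s}$ is not integrable there for $s<1$).

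The genuine gap is the mode of convergence in \eqref{eq:limits}. Your spectral argument identifies the limit of $\rho^{1-2s}\partial_\rho\widehat{u}$ fiberwise, which via Plancherel gives the statement at best in $L^2$ (or for $f$ in a dense class with spectral decay). The theorem asserts the limit in $L^p(N)$ for every $1\le p<\infty$ with $f,\mathcal{L}_sf\in L^p$ --- and \eqref{eq:limit2}, which in your scheme inherits its mode of convergence from \eqref{eq:limits} through the Ces\`aro step, is explicitly an $L^p$ statement. There is no way to transfer from $L^2$ to $L^p$ here without knowing that $-\rho^{1-2s}\partial_\rho u$ is given by convolving $\mathcal{L}_sf$ with a fixed $L^1$-normalised dilation family; that kernel representation is precisely what Theorem \ref{thm:CH} provides. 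You in fact notice the kernel alternative, describing $\rho^{1-2s}\partial_\rho(\rho^{2s}f\ast\varphi_{s,\rho})$ as two individually singular pieces ``to be recombined'', but the recombination mechanism --- the rewriting $u=C_1C_2^{-1}\,\mathcal{L}_sf\ast\varphi_{-s,\rho}$ --- is exactly the ingredient your plan never supplies. Adding that identity (or an equivalent $L^p$ transference) closes the gap; everything else in your proposal goes through.
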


In our earlier paper \cite{RT}, we have obtained the integral representation for $\mathcal{L}_sf$ in the context of the Heisenberg group. (We take this opportunity to correct an error in the previous work: there we have mentioned that the integral representation is valid for all $0<s<1$. But in fact, we had proved it only for $0<s<1/2$ as it is written. For higher values of $s$, we need to substract more terms from the Taylor expansion of $f$, see for instance \cite[p. 9]{Si}). In \cite{RT} we have used the integral representation in order to get the ground state representation for the fractional power $\mathcal{L}_s$ which has led to a Hardy-type inequality with non-homogeneous weight function. In this paper we obtain another proof of the Hardy inequality, in the general context of $H$-type groups which is based on the so called trace Hardy inequality. In order to state the inequality we introduce the following variant of a Sobolev space. Let $\nabla u=(X_1u, \ldots, X_{2n}u,  \frac12Z_1u, \ldots, \frac12Z_mu,\partial_\rho u)$. Let $\widetilde{W}_0^{s,2}(S)$ be the completion of $C_0^{\infty}(N\times \R)$ with respect to the norm 
$$
\|u\|_{(s)}^2=\int_0^{\infty}\int_N|\nabla u(v,z,\rho)|^2\rho^{1-2s}\,dv\,dz\,d\rho.
$$
This is indeed a norm: the vanishing of the integral implies the vanishing of the gradient and hence the function reduces to a constant. But then, as the function is from $ C_0^\infty $, it has to be zero.

\begin{thm}[General trace Hardy inequality]
\label{thm:GtHi}
Let $0<s<1$, $\rho>0$ and let $\varphi$ be a real valued function in the domain of $\mathcal{L}_s$. Further assume that $ \varphi^{-1} \mathcal{L}_s\varphi $ is locally integrable. Then for any real valued function $u \in \widetilde{W}_0^{s,2}(S)$, we have the inequality
$$
\int_0^{\infty}\int_N|\nabla u(v,z,\rho)|^2\rho^{1-2s}\,dv\,dz\,d\rho\ge \frac{2^{1-2s}\Gamma(1-s)}{\Gamma(s)}\int_Nu^2(v,z,0)\frac{\mathcal{L}_s\varphi(v,z)}{\varphi(v,z)}\,dv\,dz.
$$
 Moreover, the inequality is sharp and equality holds whenever $u$ is a solution of the extension problem with initial condition $\varphi$.
\end{thm}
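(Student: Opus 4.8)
The plan is to run the ground-state-substitution (factorisation) argument that underlies trace Hardy inequalities, using the explicit solution of the extension problem as the comparison function. First I would let $w$ denote the normalised solution of \eqref{eq:ep} with boundary datum $\varphi$, namely $w = C_1(n,m,s)\rho^{2s}\varphi\ast\varphi_{s,\rho}$, so that $w(v,z,0)=\varphi(v,z)$ and, by Theorem \ref{thm:sol},
$$
-\lim_{\rho\to0^+}\rho^{1-2s}\partial_\rho w = 2^{1-2s}\frac{\Gamma(1-s)}{\Gamma(s)}\mathcal{L}_s\varphi.
$$
The structural point is that the quadratic form in the statement is the Dirichlet energy of the degenerate-elliptic operator in \eqref{eq:ep}: writing the weighted divergence associated to $\nabla$ one has
$$
\operatorname{div}\big(\rho^{1-2s}\nabla w\big) = \rho^{1-2s}\big(-\mathcal{L} + \partial_\rho^2 + \tfrac{1-2s}{\rho}\partial_\rho + \tfrac14\rho^2\Delta_z\big)w,
$$
whose right-hand side vanishes identically in $N\times\R^+$ precisely because $w$ solves \eqref{eq:ep}.

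Next I would perform the factorisation. Setting $\phi = u/w$ and using that each of $X_j$, $Z_k$, $\partial_\rho$ is a first-order derivation, the product rule $\nabla(w\phi)=w\nabla\phi+\phi\nabla w$ gives the purely algebraic pointwise identity
$$
|\nabla u|^2 = w^2|\nabla\phi|^2 + \nabla w\cdot\nabla\big(u^2/w\big),
$$
valid wherever $w\neq0$. Multiplying by $\rho^{1-2s}$ and integrating over $N\times\R^+$, I would integrate the second term by parts: the left-invariant fields $X_j$, $Z_k$ are skew-adjoint for the Haar measure on $N$ and produce no boundary contribution, while the $\partial_\rho$-integration throws the derivative onto $\rho^{1-2s}\partial_\rho w$. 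Since $\operatorname{div}(\rho^{1-2s}\nabla w)=0$ in the bulk, only the boundary term at $\rho=0$ survives, and by the Neumann formula above it equals
$$
-\int_N\frac{u^2(v,z,0)}{\varphi(v,z)}\lim_{\rho\to0^+}\rho^{1-2s}\partial_\rho w\,dv\,dz = \frac{2^{1-2s}\Gamma(1-s)}{\Gamma(s)}\int_N u^2(v,z,0)\,\frac{\mathcal{L}_s\varphi}{\varphi}\,dv\,dz.
$$
This yields the exact energy identity
$$
\|u\|_{(s)}^2 = \int_0^\infty\!\!\int_N w^2\,|\nabla(u/w)|^2\,\rho^{1-2s}\,dv\,dz\,d\rho + \frac{2^{1-2s}\Gamma(1-s)}{\Gamma(s)}\int_N u^2(v,z,0)\,\frac{\mathcal{L}_s\varphi}{\varphi}\,dv\,dz.
$$
The inequality is immediate because the first term on the right is nonnegative; moreover equality forces $\nabla(u/w)\equiv0$, i.e.\ $u=c\,w$, which is exactly the assertion that equality holds when $u$ solves the extension problem with datum $\varphi$, and this realisation of equality gives sharpness.

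The main obstacle is the rigorous justification of the integration by parts and of the boundary identity, rather than the formal identity itself. I would first establish the energy identity for $u\in C_0^\infty(N\times\R)$ and then pass to the limit using the definition of $\widetilde{W}_0^{s,2}(S)$ as the completion of $C_0^\infty(N\times\R)$ under $\|\cdot\|_{(s)}$. The delicate points are: controlling the boundary term as $\rho\to\infty$, which requires decay of $w$ and $\partial_\rho w$ obtained from the explicit kernel $\varphi_{s,\rho}$; verifying that $\lim_{\rho\to0^+}\rho^{1-2s}\partial_\rho w$ exists and is given by Theorem \ref{thm:sol}, together with the hypothesis that $\varphi^{-1}\mathcal{L}_s\varphi$ be locally integrable so that the right-hand side is finite; and handling possible zeros of $w$, which is why one works with $\varphi$ (hence $w$) of one sign so that $\phi=u/w$ is well defined. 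Pinning down the $\rho\to0$ boundary contribution is where the previously established Dirichlet-to-Neumann asymptotics do all the work.
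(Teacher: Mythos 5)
Your derivation of the inequality itself is, in substance, the paper's own proof (Theorem~\ref{thm:ThfHardyi}): the authors expand $\int_N\big(Y_iu-\frac{u}{w}Y_iw\big)^2\,dv\,dz\ge 0$ for each field $Y_i$ and separately for $\partial_\rho$, which, summed and integrated by parts, is exactly your pointwise factorisation $|\nabla u|^2=w^2|\nabla(u/w)|^2+\nabla w\cdot\nabla(u^2/w)$ leading to their identity \eqref{eq:key}; the comparison function $w=C_1(n,m,s)\rho^{2s}\varphi\ast\varphi_{s,\rho}$, the Dirichlet-to-Neumann limit \eqref{eq:limits} for the boundary term at $\rho=0$, and the reduction by density to $u\in C_0^\infty(N\times\R)$ are all the same. (One small point, common to both arguments: for $\operatorname{div}(\rho^{1-2s}\nabla w)=\rho^{1-2s}\mathbb{L}w$ to hold with $\mathbb{L}$ containing $\frac14\rho^2\Delta_z$, the $z$-components of $\nabla$ must carry the factor $\frac{\rho}{2}$ rather than the printed $\frac12$; the paper's proof implicitly makes the same correction when it invokes \eqref{eq:opera}.)

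The genuine gap is in the sharpness claim. Asserting equality ``whenever $u$ is a solution of the extension problem with initial condition $\varphi$'' presupposes that this solution lies in $\widetilde{W}_0^{s,2}(S)$, which is defined as a \emph{completion} of $C_0^{\infty}(N\times\R)$ in the energy norm; since $w$ is nowhere compactly supported and decays only polynomially, this membership is not automatic, and the authors flag it explicitly: the inequality is easy for $C_0^\infty$ functions, ``but for the sharpness, we have to show that the solution of the extension problem belongs to our space. A priori this is not clear.'' The paper devotes Theorem~\ref{thm:Ws2N} to precisely this: Proposition~\ref{prop:equ} first establishes finiteness of the energy via the identity $\int_0^\infty\int_N|\nabla w|^2\rho^{1-2s}\,dv\,dz\,d\rho=2^{1-2s}\frac{\Gamma(1-s)}{\Gamma(s)}\,(\varphi,\mathcal{L}_s\varphi)$, and then $w$ is approximated in the energy norm by the truncations $u_j=\eta\big(2^{-j}((\rho^2+|v|^2)^2+|z|^2)\big)w$, the key estimates being $|\nabla\psi_j|\le C2^{-j/4}$ for the cutoffs and $\|w(\cdot,\rho)\|_{L^2(N)}\le\|\varphi\|_{L^2(N)}$. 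Your statements ``this realisation of equality gives sharpness'' and ``equality forces $u=cw$'' both tacitly assume $w$ is an admissible competitor, and your list of delicate points (decay as $\rho\to\infty$, existence of the Neumann limit, zeros of $w$) omits exactly this issue, which is the only part of the theorem that goes beyond the formal ground-state computation. Once $w\in\widetilde{W}_0^{s,2}(S)$ is secured, the equality case is immediate — both sides reduce to $2^{1-2s}\frac{\Gamma(1-s)}{\Gamma(s)}(\varphi,\mathcal{L}_s\varphi)$ — so supplying Theorem~\ref{thm:Ws2N}, or an equivalent approximation argument, is what your write-up needs in order to be complete.
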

There is no problem in proving the inequality in Theorem \ref{thm:GtHi} for $ C_0^\infty $ functions and hence for $ u $ in our space $\widetilde{W}_0^{s,2}(S)$. But for the sharpness, we have to show that the solution of the extension problem belongs to our space. A priori this is not clear; this is proved in Theorem ~\ref{thm:Ws2N}.

Let 
\begin{equation}
\label{eq:C2}
C_2(n,m,s)=4^{2s}\frac{\Gamma\big(\frac{n+1+s}{2}\big)}{\Gamma\big(\frac{n+1-s}{2}\big)}\frac{\Gamma\big(\frac{n+m+s}{2}\big)}{\Gamma\big(\frac{n+m-s}{2}\big)}.
\end{equation}
By taking $u$ to be a solution of the extension problem with initial condition $f$, $\varphi=\varphi_{-s,\delta}$, $\delta>0$, and making use of a result of Cowling and Haagerup \cite{CH}, we can obtain the following Hardy type inequality.

\begin{cor}
\label{cor:HarNH}
Let $0<s<1$, $\delta>0$. Let $f$, $\mathcal{L}_sf\in L^2(N)$. Then
$$
(\mathcal{L}_sf,f)\ge C_2(n,m,s)\delta^{2s}\int_Nf^2(v,z)\big((\delta^2+|v|^2)^2+16|z|^2\big)^{-s}\,dv\,dz,
$$
where $C_2(n,m,s)$ is the constant in \eqref{eq:C2}.
Here again the constant is sharp and equality is obtained when $f=\varphi_{-s,\delta}$.
\end{cor}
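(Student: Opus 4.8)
The plan is to apply the general trace Hardy inequality (Theorem~\ref{thm:GtHi}) with the specific reference function $\varphi=\varphi_{-s,\delta}$, namely $\varphi(v,z)=\big((\delta^2+|v|^2)^2+16|z|^2\big)^{-\frac{n+m-s}{2}}$, and with $u$ chosen to be the solution of the extension problem \eqref{eq:ep} with initial condition $f$. By Theorem~\ref{thm:Ws2N} this solution belongs to $\widetilde{W}_0^{s,2}(S)$, so the inequality of Theorem~\ref{thm:GtHi} may legitimately be applied to it. To verify the hypotheses on $\varphi$ I would invoke the key identity $\mathcal{L}_s\varphi_{-s,\delta}=C_2(n,m,s)\,\delta^{2s}\varphi_{s,\delta}$ established below: it shows at once that $\varphi_{-s,\delta}$ lies in the domain of $\mathcal{L}_s$ and that $\varphi^{-1}\mathcal{L}_s\varphi=C_2(n,m,s)\,\delta^{2s}\big((\delta^2+|v|^2)^2+16|z|^2\big)^{-s}$, which is bounded near the origin (since $\delta>0$) and hence locally integrable.

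With these choices $u(v,z,0)=f(v,z)$, so the right hand side of Theorem~\ref{thm:GtHi} equals $\frac{2^{1-2s}\Gamma(1-s)}{\Gamma(s)}\int_N f^2\,\varphi^{-1}\mathcal{L}_s\varphi\,dv\,dz$. For the left hand side, since $u$ is the extension of $f$, an integration by parts in $\rho$ together with the Dirichlet-to-Neumann formula \eqref{eq:limits} yields the energy identity
\[
\int_0^{\infty}\int_N|\nabla u|^2\rho^{1-2s}\,dv\,dz\,d\rho=-\int_N f\,\lim_{\rho\to0^+}\rho^{1-2s}\partial_\rho u\,dv\,dz=\frac{2^{1-2s}\Gamma(1-s)}{\Gamma(s)}(\mathcal{L}_sf,f),
\]
which is legitimate because $f,\mathcal{L}_sf\in L^2(N)$ and is precisely the computation underlying the equality case of Theorem~\ref{thm:GtHi}. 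Substituting both sides and cancelling the common factor $\frac{2^{1-2s}\Gamma(1-s)}{\Gamma(s)}$ reduces the corollary to the pointwise identity $\varphi^{-1}\mathcal{L}_s\varphi=C_2(n,m,s)\,\delta^{2s}\big((\delta^2+|v|^2)^2+16|z|^2\big)^{-s}$.

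The heart of the proof, and the step I expect to be the main obstacle, is therefore the explicit intertwining relation $\mathcal{L}_s\varphi_{-s,\delta}=C_2(n,m,s)\,\delta^{2s}\varphi_{s,\delta}$; once this is known, dividing by $\varphi_{-s,\delta}$ and using $\varphi_{s,\delta}/\varphi_{-s,\delta}=\big((\delta^2+|v|^2)^2+16|z|^2\big)^{-s}$ gives the required pointwise identity immediately. To prove it I would pass to the group Fourier transform on $N$ and use the spectral definition of $\mathcal{L}_s$ recalled above, computing the transform of $\varphi_{-s,\delta}$ explicitly; the action of $\mathcal{L}_s$ raises the exponent $-\frac{n+m-s}{2}$ of $\varphi_{-s,\delta}$ to the exponent $-\frac{n+m+s}{2}$ of $\varphi_{s,\delta}$. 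The resulting special-function integrals are precisely those evaluated by Cowling and Haagerup \cite{CH}, whose formula produces the two Gamma quotients $\frac{\Gamma(\frac{n+1+s}{2})}{\Gamma(\frac{n+1-s}{2})}$ and $\frac{\Gamma(\frac{n+m+s}{2})}{\Gamma(\frac{n+m-s}{2})}$; together with the factors of $2$ and $4$ coming from the normalizations these assemble into the constant $C_2(n,m,s)$ of \eqref{eq:C2}. This is the analogue, on a general $H$-type group, of the computation carried out for the Heisenberg group in \cite{RT}.

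Finally, sharpness is immediate from the equality statement of Theorem~\ref{thm:GtHi}: equality there holds exactly when $u$ is the solution of the extension problem with initial condition $\varphi=\varphi_{-s,\delta}$. Since the extension of $f$ coincides with the extension of $\varphi_{-s,\delta}$ precisely when $f=\varphi_{-s,\delta}$, the constant $C_2(n,m,s)$ is sharp and equality in the corollary is attained at $f=\varphi_{-s,\delta}$, as claimed.
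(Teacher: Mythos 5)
Your proposal is correct and takes essentially the same route as the paper: there, Corollary \ref{cor:HarNH} is obtained by putting $\varphi=\varphi_{-s,\delta}$ into the general trace Hardy inequality (Corollary \ref{thm:traceH}, via the intertwining identity $\mathcal{L}_s\varphi_{-s,\delta}=C_2(n,m,s)\delta^{2s}\varphi_{s,\delta}$ of Theorem \ref{thm:CH}, which is exactly the Cowling--Haagerup computation you sketch) and then taking $u$ to be the extension of $f$, whose energy reduces to $2^{1-2s}\frac{\Gamma(1-s)}{\Gamma(s)}(\mathcal{L}_sf,f)$ by the same integration-by-parts identity you use (Proposition \ref{prop:equ}, justified by Theorem \ref{thm:Ws2N}). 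Your sharpness argument via the equality case with $f=\varphi_{-s,\delta}$ also coincides with the paper's.
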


In the Euclidean case, sharp Hardy inequalities with non-homogeneous as well as homogeneous weight functions are known and they play an important role in several problems of partial differential equations. On the other hand, trace Hardy inequality for the Laplacian plays a role in proving Hardy inequality for fractional powers of $\Delta$ on $\R^n$. In the same way, we can prove the following version of Hardy inequality for the fractional powers of $\mathcal{L}$ on $N$. 
Let $
\varphi_s(v,z)=|(v,z)|^{-(n+m+s)}$ where $|(v,z)|=(|v|^4+16|z|^2)^{\frac14}$ is the homogeneous norm on $N$ and $\psi_s(v,z)=C_1(n,m,s)(\varphi_s\ast |\cdot|^{-Q+2s})(v,z)$, where $C_1(n,m,s)$ is given in \eqref{eq:C1} and let us define
\begin{equation}
\label{eq:homowsbis}
w_s(v,z)=\varphi_s(v,z)\psi_s(v,z)^{-1}.
\end{equation} 
It is easily verified that  $w_s$ is homogeneous of degree $-2s$.

\begin{thm}
\label{thm:tHihomo}
Let $0<s<1$ and let $u\in \widetilde{W}_0^{s,2}(S)$ be a real valued function. Then we have 
$$
\int_0^{\infty}\int_N|\nabla u(v,z,\rho)|^2\rho^{1-2s}\,dv\,dz\,d\rho\ge 2^{1-2s}\frac{\Gamma(1-s)}{\Gamma(s)}C_2(n,m,s)\int_Nu^2(v,z,0)w_s(v,z)\,dv\,dz,
$$
where $w_s$ is the homogeneous weight in \eqref{eq:homowsbis} and $C_2(n,m,s)$ is the constant in \eqref{eq:C2}.
The constant is sharp but equality is never achieved in $\widetilde{W}_0^{s,2}(S)$.
\end{thm}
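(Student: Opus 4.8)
The plan is to obtain Theorem \ref{thm:tHihomo} as a special case of the general trace Hardy inequality (Theorem \ref{thm:GtHi}) corresponding to the single judicious choice $\varphi=\psi_s$. With that choice the right-hand side of Theorem \ref{thm:GtHi} becomes $\frac{2^{1-2s}\Gamma(1-s)}{\Gamma(s)}\int_N u^2(v,z,0)\frac{\mathcal{L}_s\psi_s}{\psi_s}\,dv\,dz$, so the whole theorem collapses to the single identity $\mathcal{L}_s\psi_s=C_2(n,m,s)\,\varphi_s$, which is precisely $\frac{\mathcal{L}_s\psi_s}{\psi_s}=C_2 w_s$ since $w_s=\varphi_s\psi_s^{-1}$. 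Before invoking Theorem \ref{thm:GtHi} I would verify its hypotheses for $\varphi=\psi_s$: that $\psi_s$ lies in the domain of $\mathcal{L}_s$ (which follows once the identity is known) and that $\psi_s^{-1}\mathcal{L}_s\psi_s=C_2 w_s$ is locally integrable, which holds because $w_s$ is homogeneous of degree $-2s$ and $2s<Q$.

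The heart of the matter is therefore the identity $\mathcal{L}_s\psi_s=C_2\varphi_s$. Since $\psi_s=C_1\,\varphi_s\ast|\cdot|^{-Q+2s}$, I would establish it by recognizing the convolution kernel $|\cdot|^{-Q+2s}$ as, up to an explicit constant, the fundamental solution of $\mathcal{L}_s$: by homogeneity the fundamental solution must have degree $2s-Q$, hence equals a multiple $a_s|\cdot|^{-Q+2s}$, so that $\mathcal{L}_s\big[|\cdot|^{-Q+2s}\big]=a_s^{-1}\delta_0$ distributionally and $\mathcal{L}_s\psi_s=C_1 a_s^{-1}\,\varphi_s$. The theorem is thus equivalent to the numerical identity $a_s=C_1/C_2$, i.e. to pinning down the exact normalization of the fundamental solution of $\mathcal{L}_s$. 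This is the step I expect to be the main obstacle, since it is a genuine computation rather than a formal manipulation; I would carry it out with the same Jacobi-function / Cowling--Haagerup machinery underlying Corollary \ref{cor:HarNH} (equivalently, via the explicit integral representation \eqref{eq:ir}), exploiting that the very constants $C_1$ and $C_2$ already occur there. As a consistency check, both sides of $\mathcal{L}_s\psi_s=C_2\varphi_s$ are homogeneous of the same degree $-(n+m+s)$ [$\psi_s$ has degree $-(n+m-s)$ and $\mathcal{L}_s$ lowers homogeneity by $2s$], which also re-derives that $w_s$ is homogeneous of degree $-2s$.

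For the sharpness of the constant I would appeal directly to the sharpness asserted in Theorem \ref{thm:GtHi}, which is valid for each admissible $\varphi$ and is witnessed by the solution $U$ of the extension problem \eqref{eq:ep} with datum $\varphi=\psi_s$; transporting this through $\mathcal{L}_s\psi_s=C_2\varphi_s$ shows that the constant $2^{1-2s}\frac{\Gamma(1-s)}{\Gamma(s)}C_2$ cannot be improved, the optimal ratio being approached by truncations of $U$ lying in $C_0^\infty(N\times\R)$. For the final assertion that equality is never attained in $\widetilde{W}_0^{s,2}(S)$, the key observation is that this extremal $U$ is the extension of the homogeneous function $\psi_s$ and is itself homogeneous under the parabolic dilations; consequently its energy $\|U\|_{(s)}^2=\int_0^\infty\int_N|\nabla U|^2\rho^{1-2s}\,dv\,dz\,d\rho$ is a scale-invariant integral of a homogeneous integrand over the noncompact cone and diverges, so $U\notin\widetilde{W}_0^{s,2}(S)$. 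Since the equality analysis behind Theorem \ref{thm:GtHi} (a Picone/ground-state type identity) forces any extremizer to be a constant multiple of $U$, no nonzero element of the space can give equality. This is exactly the dichotomy with Corollary \ref{cor:HarNH}, where the datum $\varphi_{-s,\delta}$ with $\delta>0$ is square integrable and of finite energy and equality is genuinely achieved.
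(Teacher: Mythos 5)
Your reduction to the single identity $\mathcal{L}_s\psi_s=C_2(n,m,s)\varphi_s$ is the right heuristic, and your normalization $a_s=C_1/C_2$ for the fundamental solution is in fact correct (it is exactly the $\delta\to0$ limit of Theorem \ref{thm:CH}: $\mathcal{L}_s\varphi_{-s,\delta}=C_2\delta^{2s}\varphi_{s,\delta}$, with $C_1\delta^{2s}\varphi_{s,\delta}\to\delta_0$ and $\varphi_{-s,\delta}\to|\cdot|^{-Q+2s}$). But there is a genuine gap at the very first step: Theorem \ref{thm:GtHi} cannot be invoked with $\varphi=\psi_s$. The hypothesis there (made explicit in Theorem \ref{thm:ThfHardyi}) is that $\varphi\in L^2(N)$ lies in the operator domain of $\mathcal{L}_s$, and its proof uses the explicit extension $w=C_1\rho^{2s}\varphi\ast\varphi_{s,\rho}$ together with the Dirichlet-to-Neumann limit of Theorem \ref{thm:sol}, all of which were established only for $L^p$ data. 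The function $\psi_s$ is homogeneous of degree $-(n+m-s)$, so $\psi_s^2$ is homogeneous of degree $-(Q-2s)$ and $\int_{|x|\ge1}|x|^{-(Q-2s)}\,dx=\infty$; thus $\psi_s\notin L^2(N)$ (indeed $\psi_s\notin L^p(N)$ for any $p$, being homogeneous). Your parenthetical claim that domain membership ``follows once the identity is known'' is false for the same reason: $\varphi_s$ is homogeneous of degree $-(n+m+s)$ and is also not square integrable, so $\mathcal{L}_s\psi_s=C_2\varphi_s$ can only hold in a pointwise/distributional sense, never in the $L^2$ spectral sense in which Theorem \ref{thm:GtHi} was proved.

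Filling this gap is the actual content of the paper's proof, which does not apply Theorem \ref{thm:GtHi} as a black box but reruns the Picone-type identity \eqref{eq:key2} for the regularized admissible data $\varphi=\eta\,\varphi_s\ast\varphi_{-s,\delta}$, where $\eta$ is a compactly supported cutoff away from the origin and $\delta>0$; Theorem \ref{thm:CH} computes $\mathcal{L}_s$ of this function exactly, and then two limit passages ($\delta\to0$, then $\eta=\eta_k\to1$), each justified by Vitali's theorem with the measures $u^2(x,0)\,dx$ and $u^2\rho^{1-2s}\,dx\,d\rho$, yield the identity \eqref{eq:ideC} with the weight $w_s$. From that identity the inequality, the sharpness, and the non-attainment all drop out at once: equality forces $u=c\,w$ with $w=C_1\rho^{2s}\varphi_s\ast|\cdot|^{-Q+2s}\ast\varphi_{s,\rho}$, and this $w$ makes the remaining terms infinite --- which matches your (correct) scale-invariance observation that the homogeneous extremal has divergent energy. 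Note also that your non-attainment argument quietly uses the identity (not merely the inequality) for the singular datum $\psi_s$, so it too depends on the limiting construction you omitted. In short: right key constant and right extremal picture, but the step ``plug $\varphi=\psi_s$ into Theorem \ref{thm:GtHi}'' fails as stated, and the approximation-plus-Vitali argument is not a routine verification but the core of the proof.
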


\begin{cor}
\label{thm:Hardyhom}
Let $0<s<1$, and $f$, $\mathcal{L}_sf \in L^2(N)$. Then
$$
(\mathcal{L}_sf,f)\ge C_2(n,m,s)\int_N f^2(v,z)w_s(v,z)\,dv\,dz,
$$
where $w_s$ is the function \eqref{eq:homowsbis}, which is homogeneous of degree $-2s$ and $C_2(n,m,s)$ is given in \eqref{eq:C2}. The constant is sharp but equality is never achieved in $\widetilde{W}_0^{s,2}(S)$.
\end{cor}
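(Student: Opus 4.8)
The plan is to deduce this Hardy inequality directly from the homogeneous trace Hardy inequality of Theorem \ref{thm:tHihomo} by choosing the competitor $u$ to be the solution of the extension problem with initial datum $f$. The key structural fact to exploit is the identity, established in the course of proving Theorem \ref{thm:GtHi}, that when $u$ solves \eqref{eq:ep} with boundary value $\varphi$ the Dirichlet energy on the left collapses to a boundary pairing governed by the Dirichlet-to-Neumann map \eqref{eq:limits}, namely
\begin{equation*}
\int_0^{\infty}\int_N|\nabla u|^2\rho^{1-2s}\,dv\,dz\,d\rho = \frac{2^{1-2s}\Gamma(1-s)}{\Gamma(s)}\int_N u(v,z,0)\,\mathcal{L}_s u(v,z,0)\,dv\,dz.
\end{equation*}

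First I would take $f$ with $f,\mathcal{L}_sf\in L^2(N)$ and let $u=C_1(n,m,s)\rho^{2s}f\ast\varphi_{s,\rho}$ be the associated solution of the extension problem, which by \eqref{eq:convergfT} has trace $u(v,z,0)=f$ on $N$. By Theorem \ref{thm:Ws2N} (invoked already for the sharpness clause of Theorem \ref{thm:GtHi}) this $u$ lies in $\widetilde{W}_0^{s,2}(S)$, so it is a legitimate test function. Applying Theorem \ref{thm:tHihomo} to this $u$ gives
\begin{equation*}
\int_0^{\infty}\int_N|\nabla u|^2\rho^{1-2s}\,dv\,dz\,d\rho \ge 2^{1-2s}\frac{\Gamma(1-s)}{\Gamma(s)}C_2(n,m,s)\int_N f^2 w_s\,dv\,dz,
\end{equation*}
and then I would evaluate the left-hand side using the collapse identity above together with the Dirichlet-to-Neumann computation \eqref{eq:limits}, so that the factor $\frac{2^{1-2s}\Gamma(1-s)}{\Gamma(s)}$ cancels and the Dirichlet energy is replaced by $(\mathcal{L}_sf,f)$. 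This yields the stated inequality $(\mathcal{L}_sf,f)\ge C_2(n,m,s)\int_N f^2 w_s\,dv\,dz$.

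For the sharpness of the constant I would argue by a limiting family: take $f=\varphi_{-s,\delta}$, for which Corollary \ref{cor:HarNH} gives equality in the non-homogeneous inequality with constant $C_2(n,m,s)\delta^{2s}$, and let $\delta\to0$ so that the non-homogeneous weight $\delta^{2s}\big((\delta^2+|v|^2)^2+16|z|^2\big)^{-s}$ degenerates to a multiple of the homogeneous weight $w_s$; matching the constants in this limit forces $C_2(n,m,s)$ to be optimal in the homogeneous inequality as well. To see that equality is never attained in $\widetilde{W}_0^{s,2}(S)$, I would note that equality in Theorem \ref{thm:tHihomo} would require $u$ to be the extension of the extremiser $\varphi_s(v,z)=|(v,z)|^{-(n+m+s)}$, but $\varphi_s$ is homogeneous of degree $-(n+m+s)$ and hence not square-integrable against the weight, so its extension fails to have finite $\|\cdot\|_{(s)}$ norm and therefore does not belong to $\widetilde{W}_0^{s,2}(S)$.

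The main obstacle I anticipate is the sharpness-and-nonattainment clause rather than the inequality itself: the inequality is an essentially formal consequence of Theorem \ref{thm:tHihomo} once the membership $u\in\widetilde{W}_0^{s,2}(S)$ is granted, but justifying that the optimal constant survives the passage $\delta\to0$ requires controlling the weak limit of the extremising family and verifying that the extremal profile $\varphi_s$ sits exactly at the boundary of the space. Care is also needed to confirm that the weight $w_s$ in \eqref{eq:homowsbis}, defined through the convolution $\psi_s=C_1(n,m,s)(\varphi_s\ast|\cdot|^{-Q+2s})$, is precisely the homogeneous limit produced by the degeneration of the non-homogeneous weight, so that the two sharp constants genuinely coincide.
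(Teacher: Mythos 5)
Your derivation of the inequality is exactly the paper's proof: take $u=C_1(n,m,s)\rho^{2s}f\ast\varphi_{s,\rho}$, note it solves the extension problem with trace $f$ and lies in $\widetilde{W}_0^{s,2}(S)$ by Theorem \ref{thm:Ws2N}, use the equality case of Theorem \ref{thm:GtHi} (i.e.\ Proposition \ref{prop:equ}) to identify the Dirichlet energy with $2^{1-2s}\frac{\Gamma(1-s)}{\Gamma(s)}(\mathcal{L}_sf,f)$, and feed this into Theorem \ref{thm:tHihomo} so the Gamma factors cancel. That part is correct and needs no further comment.

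Your add-on for the sharpness clause, however, is both unnecessary and flawed as stated. It is unnecessary because sharpness and non-attainment are already part of the statement of Theorem \ref{thm:tHihomo}, which is what the paper's ``the conclusion follows'' silently invokes: any $f$ attaining equality in the corollary would, via its extension and the energy identity, attain equality in Theorem \ref{thm:tHihomo}, which is impossible. It is flawed because (i) the limiting-family argument does not work literally: $\delta^{2s}\big((\delta^2+|v|^2)^2+16|z|^2\big)^{-s}\to 0$ pointwise as $\delta\to 0$ for $(v,z)\neq(0,0)$, so the non-homogeneous weight degenerates to zero, not to a multiple of $w_s$; the sharpness in the paper instead comes from the exact identity \eqref{eq:ideC}, in which the deficit is the explicit nonnegative term $\int_0^\infty\int_N\big|\frac{\nabla u}{u}-\frac{\nabla w}{w}\big|^2\rho^{1-2s}u^2$, made small along the truncations $\eta_k w$; and (ii) you misidentify the equality profile: equality in Theorem \ref{thm:tHihomo} forces $u=c\,w$ where $w$ is the extension of $\psi_s=C_1(n,m,s)\,\varphi_s\ast|\cdot|^{-Q+2s}$ (homogeneous of degree $-(n+m-s)$), not of $\varphi_s$ itself, and the obstruction recorded in the paper is that this $w$ renders the remaining terms of \eqref{eq:ideC} infinite, rather than a square-integrability failure of $\varphi_s$ against the weight. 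If you replace your sharpness paragraph by a citation of the corresponding clause of Theorem \ref{thm:tHihomo}, your proof coincides with the paper's.
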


\begin{rem}
It would be interesting to see if $w_s(v,z)$ can be replaced by $|(v,z)|^{2s}$. 
\end{rem}

As explained earlier, the extension problem for the sublaplacian on Heisenberg groups $ \H^n$ has been studied by Frank et al in \cite{FGMT}, where the authors have proved the existence of a solution when the initial condition $ f $ belongs to $ C_0^\infty(\H^n).$ However, the solution is not given explicitly in terms of the initial condition. In this article, as shown in Theorem \ref{thm:sol}, we are able to write down the solution explicitly. This allows us to study $ L^p $ convergence of the solution and its $ \rho $ derivative. The analogue of Theorem \ref{thm:sol} in the context of the Euclidean Laplacian was proved by Caffarelli and Silvestre in \cite{Caffarelli-Silvestre}.

As mentioned above, the explicit solution has also been obtained by M\"ollers et al \cite{MOZ} by considering the operator $L_s$ in \eqref{eq:Ls}. They have shown that solutions of the equation $L_s u=0$ with initial condition $u(z,0,t)=f(z,t)$ which are radial in the $\zeta$ variable are given by $u(z,\zeta,t)=c_{n,s}|\zeta|^{2s}f \ast \varphi_{s,|\zeta|}(z,t)$ where $\varphi_{s,\rho}$ is the one in \eqref{eq:varfisr} when $N=\H^n$. Using representation theory arguments, they have proved a nice isometry property for the operator $P_s$ which takes $f$ into the above solution $u(z,\zeta,t)$. In this paper we prove an analogous property for the solution operator associated to $H$-type groups. In the process, we also provide an alternate proof of the result of M\"ollers et al, see Subsection \ref{subsec:iso}. First we need to set up some notation. For $s>0$, let $H^s(N)$ stand for the domain of $\mathcal{L}_s^{1/2}$: that is to say, $f\in H^s(N)$ if and only if $\mathcal{L}_s^{1/2}f\in L^2(N)$ which is the same as saying that $ \mathcal{L}_{s/2}f \in L^2(N)$. Note that this space is just a variant of the homogeneous Sobolev space associated to the sublaplacian. In the Euclidean case, the correspoding spaces $ H^s(\R^n) $ are defined in terms of the fractional powers $ (-\Delta)^{s/2}$. We write $\|f\|_{H^s(N)} = \|\mathcal{L}_s^{1/2}f\|_{L^2(N)}$ to denote the norm on $H^s(N)$. For each $ \omega \in \mathbb{S}^{m-1}$ we denote by $R_{\omega}f $ the Radon transform of an integrable function $f$ on $ N $ in the $z$ variable. That is
\begin{equation}
\label{eq:Radon}
R_{\omega}f(v,t)=\int_{z'\in\omega^{\perp}}f(v,t\omega+z')\,dz', \quad t\in \R,
\end{equation}
where $\omega^{\perp}=\{z': z'\cdot \omega=0\}$ and $dz'$ is the $(m-1)$-dimensional Lebesgue measure on $\omega^{\perp}$. As will be explained later (see Subsection \ref{subsec:reprH}), for each $\omega$, $R_{\omega}f(v,t)$ can be considered as a function on a $H$-type group $\H_{\omega}^n$ of dimension $(2n+1)$ which is isomorphic to $\H^{n}$. The space $H^s(\H_{\omega}^n)$ is defined using the sublaplacian $\mathcal{L}$ on $\H^n$. The following is the isometry property for the solution operator associated to the extension problem on the group $N$. 

\begin{thm}
\label{thm:MOZgen}
Let $0<s<n+1$. Then any solution $u$ of the extension problem with initial condition $f$ satisfies
$$
\int_{\mathbb{S}^{m-1}}\|R_{\omega}(-\Delta_{z})^{\frac{m-1}{4}}u\|^2_{H^{s+1}(\H^{n+1})}\,d\sigma(\omega)=C \|f\|_{H^s(N)}^2.
$$
\end{thm}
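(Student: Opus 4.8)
The statement to prove is Theorem \ref{thm:MOZgen}, an isometry identity relating the $H^s(N)$ norm of the boundary datum $f$ to an integral over $\mathbb S^{m-1}$ of $H^{s+1}(\H^{n+1})$ norms of Radon transforms of the solution $u$. My plan is to reduce the $H$-type group statement to the Heisenberg case via the Radon transform, and then invoke the isometry already available on $\H^{n+1}$ (the result of M\"ollers et al, for which the paper promises an alternate proof in Subsection \ref{subsec:iso}). The guiding idea is that the operator $\Delta_z$ on the centre $\R^m$ should be diagonalized by the Radon transform: for each direction $\omega\in\mathbb S^{m-1}$, the map $R_\omega$ intertwines the $m$-dimensional centre with the one-dimensional centre of $\H^n_\omega\cong\H^n$, converting the full sublaplacian analysis on $N$ into Heisenberg-group analysis parametrized by $\omega$.

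\textbf{Step 1.} First I would record how the Radon transform $R_\omega$ interacts with the building blocks of the problem. One needs the intertwining relations $R_\omega(\mathcal L f)=\mathcal L(R_\omega f)$ (the horizontal vector fields $X_j$ commute with integration over $\omega^\perp$) and, crucially, the effect of $R_\omega$ on $\Delta_z$: since $R_\omega$ integrates out the $(m-1)$ directions orthogonal to $\omega$, the Laplacian $\Delta_z$ on $\R^m$ becomes $\partial_t^2$ on the remaining line $\R\omega$. This is exactly why the factor $(-\Delta_z)^{(m-1)/4}$ appears on the left-hand side: it is the classical power of the Laplacian that makes the Radon transform an isometry between the relevant Sobolev norms. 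Concretely, I expect the $H^s(N)$ norm, which is built from the joint functional calculus of $\mathcal L$ and $\Delta_z$ through the definition of $\mathcal L_s$, to decompose under $R_\omega$ as an integral over the sphere of Heisenberg-group norms; this is a Plancherel-type statement for the Radon transform in the $z$-variable, combined with the fact (stated earlier in the excerpt) that $R_\omega f(v,t)$ is naturally a function on the Heisenberg group $\H^n_\omega$.

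\textbf{Step 2.} Next I would verify that $R_\omega$ commutes with the whole extension-problem machinery. Applying $R_\omega$ to the extension equation \eqref{eq:ep}, and using $R_\omega(\Delta_z u)=\partial_t^2(R_\omega u)$, shows that $U_\omega:=R_\omega u$ solves the Heisenberg extension problem \eqref{eq:epH} on $\H^n_\omega\times\R^+$ with boundary datum $R_\omega f$. By the uniqueness and explicit form in Theorem \ref{thm:sol}, $U_\omega$ is the Heisenberg solution operator applied to $R_\omega f$. Then the Heisenberg isometry property (the $N=\H^n$ case, proved via representation theory / the alternate argument in Subsection \ref{subsec:iso}) gives, for each fixed $\omega$,
\begin{equation*}
\big\|R_\omega(-\Delta_z)^{\frac{m-1}{4}}u\big\|_{H^{s+1}(\H^{n+1})}^2 = c\,\big\|R_\omega(-\Delta_z)^{\frac{m-1}{4}}f\big\|_{H^s(\H^n)}^2,
\end{equation*}
with $c$ the Heisenberg constant. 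Integrating over $\mathbb S^{m-1}$ and applying the Radon–Plancherel identity from Step 1 to collapse the right-hand side back into $\|f\|_{H^s(N)}^2$ yields the claimed identity with a computable constant $C$.

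\textbf{Main obstacle.} The delicate point is Step 1, namely making precise the claim that $R_\omega$ together with the weight $(-\Delta_z)^{(m-1)/4}$ is an isometry from the $H$-type Sobolev norm to the $\omega$-averaged Heisenberg Sobolev norm, and that it correctly transports the definition of $\mathcal L_s$ on $N$ (built from the joint spectral theory of $\mathcal L$ and $\Delta_z$) to $\mathcal L_s$ on $\H^n$. This requires the Fourier-analytic description of $N$ and its representation theory, decomposing the $z$-variable via the Fourier transform on $\R^m$, writing $\Delta_z$ in polar coordinates as $\lambda\mapsto|\lambda|$ with $\lambda\in\R^m$, and matching the radial part of the $\R^m$ Plancherel measure against the one-dimensional Fourier picture on each $\H^n_\omega$; the power $\frac{m-1}{4}$ is precisely the Jacobian weight $|\lambda|^{(m-1)/2}$ from integrating in spherical coordinates on $\R^m$. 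I expect the bookkeeping of constants and the justification that the two functional calculi agree fibrewise to be the genuinely technical part, while the reduction itself, once the intertwining relations are in hand, should follow cleanly from the already-established Heisenberg isometry.
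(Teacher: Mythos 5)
Your proposal is correct and takes essentially the same approach as the paper: the paper likewise reduces to the Heisenberg case by showing that $R_{\omega}$ applied to the explicit solution $C_1(n,m,s)\rho^{2s}f\ast\varphi_{s,\rho}$ gives the Heisenberg solution operator applied to $R_{\omega}f$, applies the fibrewise isometry on $\H^n_{\omega}$ (Theorem \ref{thm:MOZ}) for each fixed $\omega$, and recombines via $\pi_{\lambda}(R_{\omega}g)=\pi_{\lambda,\omega}(g)$, with $(-\Delta_z)^{\frac{m-1}{4}}$ contributing exactly the Jacobian factor $|\lambda|^{m-1}$ that converts the $\H^n$ Plancherel weight $|\lambda|^{n+s}\,d\lambda$ into the $N$ weight $|\lambda|^{n+m-1+s}\,d\lambda\,d\sigma(\omega)$, just as you predicted. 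The only cosmetic difference is that the paper verifies the reduction using the explicitly computed Radon transform of the kernel $\varphi_{s,\rho}$ rather than by applying $R_{\omega}$ directly to the extension equation.
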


The plan of the paper is the following. In Section \ref{sec:prelim} we recall facts related to $H$-type Lie algebras and groups and we describe the representation theory of such groups. We also define the sublaplacian on a $H$-type group and its fractional powers, and recall a known result related to solvable extensions of $H$-type groups. The study of the extension problem for the sublaplacian on $H$-type groups will be addressed in Section \ref{sec:ep}. More precisely, we will prove Theorems \ref{thm:cha}, \ref{thm:sol} and \ref{thm:MOZgen}. We will also perform the higher order extension problem for values of $s>0$. Finally, in Section \ref{sec:Hardy} we will prove trace Hardy and Hardy inequalities stated in Theorems \ref{thm:GtHi} and \ref{thm:tHihomo} and Corollaries \ref{cor:HarNH} and \ref{thm:Hardyhom}.

\section{Preliminaries on $H$-type groups}
\label{sec:prelim}

\subsection{$H$-type Lie algebras and groups}
\label{subsec:Htype}

A step two nilpotent Lie group $N$ is said to be a $H$-type group if its Lie algebra $\mathfrak{n}$ is of $H$-type. A Lie algebra $\mathfrak{n}$ is said to be a $H$-type Lie algebra if we can write $\mathfrak{n}$ as the direct sum $\mathfrak{v}\oplus \mathfrak{z}$ of two Euclidean spaces with a Lie algebra structure such that $\mathfrak{z}$ is at the centre of $\mathfrak{n}$ and for every unit vector $v\in\mathfrak{v}$ the map $\operatorname{ad}(v)$ is a surjective isometry of the orthogonal complement of $\operatorname{ker}(\operatorname{ad}(v))$ onto $\mathfrak{z}$. If $\mathfrak{n}$ is such a $H$-type algebra we define a map $J:z\to \operatorname{End}(\mathfrak{v})$ by
$$
(J_\omega v,v')=(\omega,[v,v']), \qquad \omega\in \mathfrak{z}, \quad v,v'\in \mathfrak{v}.
$$ 
It then follows that $J_\omega^2=-I$ whenever $\omega$ is a unit vector in $\mathfrak{z}$. We can therefore introduce a complex structure on $\mathfrak{v}$ using $J_\omega$. The Hermitian inner product on $\mathfrak{v}$ is given by 
$$
\langle v,v'\rangle_\omega=(v,v')+i(J_\omega v,v')=(v,\omega)+i([v,v'],\omega).
$$
Thus when $N$ is a $H$-type group, identifying $N$ with its Lie algebra $\mathfrak{n}$, we write the elements of $N$ as $(v,z)$, $v\in \mathfrak{v}$, $z\in \mathfrak{z}$. In view of the Baker--Campbell--Hausdorff formula, the group law takes the form
$$
(v,z)(v',z')=(v,z)+(v',z')+\frac12[(v,z),(v',z')].
$$
The best known example of a $H$-type group is the Heisenberg group $\H^n=\R^{2n}\times \R$. By identifying $\R^{2n}$ with $\C^n$, we write the elements of $\H^n$ as $(v,t)$, $v\in \C^n$, $t\in \R$. The group law in $\H^n$ is then given by 
$$
(v,t)(v',t')=\big(v+v',t+t'+\frac12\Im v\cdot \bar{v}\big).
$$
The Heisenberg groups play an important role in studying problems on $H$-type groups. This is due to the fact that to every $H$-type Lie algebra $\mathfrak{n}=\mathfrak{v}\oplus \mathfrak{z}$ and unit vector $\omega\in \mathfrak{z}$ we can associate a Heisenberg Lie algebra $\mathfrak{h}_\omega$ as follows. Given a unit vector $\omega\in \mathfrak{z}$, let $k(\omega)$ stand for the orthogonal complement of $\omega$ in $\mathfrak{z}$. Then the quotient algebra $\mathfrak{n}(\omega)=\mathfrak{n}/k(\omega)$ can be identified with $\mathfrak{v}\oplus \R$ by defining
$$
[(v,t),(v',t')]_\omega=(0,[J_\omega v,v']).
$$
It is known (see \cite{KR,MuSe}) that this algebra is isomorphic to the Heisenberg algebra $\mathfrak{h}^n$. We denote the corresponding group by $\H_\omega^n$ which we often identify with $\H^n$.

\subsection{The representation theory of $H$-type groups}
\label{subsec:reprH}

Before describing the representation theory of $H$-type groups, let us first recall some facts about irreducible unitary representations of the Heisenberg groups $\H^n$. It is well known that any irreducible unitary representation of $\H^n$ which is nontrivial at the centre (namely on $\{0\}\times \R$) is unitarily equivalent to the Schr\"odinger representation $\pi_{\lambda}$, for a unique $\lambda\in \R^{*}=\R\setminus\{0\}$. Here these representations $\pi_{\lambda}$ are all realised on $L^2(\R^n)$ and given explicitly by 
$$
\pi_{\lambda}(v,t)\varphi(\xi)=e^{i\lambda t}e^{i(x\cdot \xi+\frac12 x\cdot y)}\varphi(\xi+y)
$$
where $v=x+iy$, $\varphi\in L^2(\R^n)$. There is another family of one dimensional representations which do not play any role in the Plancherel theorem. Hence we do not attempt to describe them.

The group Fourier transform of an $L^1(\H^n)$ function $f$ is defined to be the operator valued function $\lambda\to \widehat{f}(\lambda)$ given by 
$$
\widehat{f}(\lambda)=\int_{\H^n} f(v,t)\pi_{\lambda}(v,t)\,dv\,dt.
$$
Sometimes we use the notation $\pi_{\lambda}(f)$ instead of $\widehat{f}(\lambda)$. Recalling the definition of $\pi_{\lambda}$ it is easy to see that 
$$
\widehat{f}(\lambda)=\int_{\C^n} f^{\lambda}(v)\pi_{\lambda}(v,0)\,dv
$$
where we have written $f^{\lambda}$ to stand for 
$$
f^{\lambda}(v)=\int_{-\infty}^{\infty}e^{i\lambda t}f(v,t)\,dt,
$$
the Euclidean inverse Fourier transform of $f$ in the central variable. We will be using this notation without any further comments.

When $f\in L^1\cap L^2(\H^n)$ it can be easily verified that $\widehat{f}(\lambda)$ is a Hilbert--Schmidt operator and we have 
$$
\int_{\H^n}|f(v,t)|^2\,dv\,dt=(2\pi)^{-n-1}\int_{-\infty}^{\infty}\|\widehat{f}(\lambda)\|_{\operatorname{HS}}^2|\lambda|^n\,d\lambda.
$$
The above equality of norms allows us to extend the definition of the Fourier transform to all $L^2$ functions. It then follows that we have Plancherel theorem: $f\to \widehat{f}$ is a unitary operator from $L^2(\H^n)$ onto $L^2(\R^*, \textrm{S}_2, d\mu)$ where $ \textrm{S}_2$ stands for the space of all Hilbert--Schmidt operators on $L^2(\R^n)$ and $d\mu(x)=(2\pi)^{-n-1}|\lambda|^nd\lambda$ is the Plancherel measure for the group $\H^n$.

The connection between $H$-type Lie algebras and Heisenberg Lie algebras allows us to get a quick picture of the representation theory of $H$-type groups. As in the case of the Heisenberg groups, the irreducible unitary representations of $H$-type group $N$ comes in two groups. As before we neglect the one dimensional representations which are trivial on the centre of $N$. If $\pi$ is any infinite dimensional irreducible representation of $N$, then its restriction to the centre has to be a unitary character. This means that $\exists \lambda\in \R^*$ and $\omega\in \mathbb{S}^{m-1}$, the unit sphere in the centre (identified with $\R^m$) such that $\pi(0,z)=e^{i\lambda(\omega,z)}\operatorname{Id}$. It can be shown that such a representation factors through a representation of $\H_{\omega}^n$, the group introduced in Subsection \ref{subsec:Htype}. By making use of the Stone--von Neumann theorem we can show that all irreducible unitary representations of $N$ are parametrised by $(\lambda,\omega)$, $\lambda>0$, $\omega\in \mathbb{S}^{m-1}$. We denote such a representation by $\pi_{\lambda,\omega}$. It follows that the restriction of $\pi_{\lambda,\omega}$ to $\H_{\omega}^n$ is unitarily equivalent to the Schr\"odinger representation $\pi_{\lambda}$. The Plancherel theorem for $H$-type groups $N$ reads as
$$
\|f\|_2^2=(2\pi)^{-n-m}\int_0^{\infty}\Big(\int_{\mathbb{S}^{m-1}}\|\pi_{\lambda,\omega}(f)\|_{\operatorname{HS}}^2\,d\sigma(\omega)\Big)\lambda^{n+m-1}\,d\lambda.
$$
This theorem can be deduced from the Plancherel theorem for the Heisenberg group by means of partial Radon transform.

We now briefly recall this connection which will be made use of later. Let $R_{\omega}f$ be the Radon transform defined in \eqref{eq:Radon}. We identify $R_{\omega}f$ with a function on $\H^n_{\omega}$. It can be shown that 
$$
R_{\omega}(f\ast g)=R_{\omega}f\ast_{\omega}R_{\omega}g
$$
for two functions $f,g\in L^1(N)$. In the above, the convolution on the left is on the group $N$ whereas $\ast_{\omega}$ on the right stands for the convolution on the Heisenberg group $\H^n_{\omega}$.
Using the above relation and the connection between $\pi_{\lambda,\omega}$ and $\pi_{\lambda}$ we can show that 
$$
\pi_{\lambda,\omega}(f)=\pi_{\lambda}(R_{\omega}f), \qquad \omega\in \mathbb{S}^{m-1}, \quad \lambda>0.
$$
From this relation and Plancherel theorem for $\H^n_{\omega}$ we can deduce Plancherel for the group $N$. 

We say that a function $f$ on $N$ is radial if it is radial in the $v$ variable. For such functions, the Fourier transform is given by a simple formula. Let $H(\lambda)=-\Delta+\lambda^2|x|^2$ be the scaled Hermite operator  with spectral decomposition 
$$
H(\lambda)=\sum_{k=0}^{\infty}(2k+n)|\lambda|P_k(\lambda).
$$
Here $P_k(\lambda)$ are the orthogonal projections of $L^2(\R^n)$ onto the $k$-th eigenspaces corresponding to the eigenvalues $(2k+n)|\lambda|$. For $\alpha\in \Na^n$ and $\lambda \neq 0$, let $\Phi_{\alpha}^{\lambda}$ be the Hermite functions on $\R^{n}$ which are eigenfunctions of $H(\lambda)$ with eigenvalues $(2|\alpha|+n)|\lambda|$ where $|\alpha|=\sum_{j=1}^n\alpha_j$, see for instance \cite{STH}. If $f$ is a radial function on $N$, then its Fourier transform $\pi_{\lambda, \omega}(f)$ reduces to a function of $H(\lambda)$:
$$
\pi_{\lambda, \omega}(f)=\sum_{k=0}^{\infty}\widehat{f}(\lambda\omega, k)P_k(\lambda)
$$
where the coefficients $\widehat{f}(\lambda\omega, k)$ are given by the following formula. Let 
$$
f^{\lambda \omega}(v)=\int_{\R^m}e^{i\lambda\omega \cdot z}f(v,z)\,dz
$$
be the inverse Fourier transform of $f$ in the central variable at $\lambda \omega$, $\lambda>0$, $\omega\in \mathbb{S}^{m-1}$. Let 
$$
\varphi_{k}^{\lambda}(v)=L_{k}^{n-1}\big(\frac12\lambda|v|^2\big)e^{-\frac{\lambda}4|v|^2}
$$
stand for the Laguerre functions of type $(n-1)$ on $\C^n$. Then
$$
\widehat{f}(\lambda\omega, k)=c_n\frac{k! (n-1)!}{(k+n-1)!}\int_{\C^n}f^{\lambda \omega}(v)\varphi_k^{\lambda}(v)\,dv.
$$
When $f$ is also radial in the $z$ variable, $f^{\lambda\omega}$ is independent of $\omega$ and we have 
$$
\pi_{\lambda, \omega}(f)=\pi_{\lambda, 1}(f)=\sum_{k=0}^{\infty}\widehat{f}(\lambda, k)P_k(\lambda).
$$
We will make use of this formula in calculating later the Fourier transform of $\varphi_{s,\rho}$ given in \eqref{eq:varfisr}.

\subsection{Sublaplacian $\mathcal{L}$ and its fractional powers}
\label{subsec:subl}
We now define the sublaplacian $\mathcal{L}$ on a $H$-type group $N$ which is the main focus of our work. We fix an orthonormal basis $X_j$, $j=1,2,\ldots,2n$ for the Lie algebra $\mathfrak{v}$ and $Z_j$, $j=1,2,\ldots, m$ for the Lie algebra $\mathfrak{z}$. We denote by $\Delta_z=\sum_{j=1}^mZ_j^2$ the ordinary Laplacian on the centre of $N$. The sublaplacian $\mathcal{L}$ on $N$ is defined by $\mathcal{L}=-\sum_{j=1}^{2n}X_j^2$. Note that we have defined $\mathcal{L}$ with a negative sign which makes it nonnegative. It is a subelliptic operator which has a self adjoint extension with an explicit spectral resolution. 
In the case of the Heisenberg group $\H^n$ the vector fields are given by 
$$
X_j=\frac{\partial}{\partial x_j}+\frac12y_j\frac{\partial}{\partial t}, \quad X_{n+j}=\frac{\partial}{\partial y_j}-\frac12x_j\frac{\partial}{\partial t}, \quad Z=\frac{\partial}{\partial t}
$$
for $j=1,2,\ldots,n$ where $(v,t)=(x+iy,t)\in \H^n$. More explicitly, the sublaplacian is given by 
$$
\mathcal{L}=-\Delta_{\C^n}-\frac14|v|^2\frac{\partial^2}{\partial t^2}+\sum_{j=1}^n\big(x_j\frac{\partial}{\partial y_j}-y_j\frac{\partial}{\partial x_j}\big)\frac{\partial}{\partial t}.
$$
For more about $\mathcal{L}$, we refer to \cite{RSt} and \cite{STH}. 

In studying the extension problem for the sublaplacian $\mathcal{L}$ we will be making good use of the heat kernel associated to $\mathcal{L}$. It is known that $\mathcal{L}$ generates a diffusion semigroup with a kernel $q_t$ from the Schwartz class: $e^{-t\mathcal{L}}f=f\ast q_t$ for $f\in L^p(N)$. An explicit expression for the heat kernel is given by 
$$
\int_{\R^m}e^{i\lambda z\cdot \omega}q_t(v,z)\,dz= (4\pi)^{-n} \Big(\frac{\lambda}{\sinh(t \lambda)}\Big)^{n} e^{-\frac{\lambda}{4} (\coth(t\lambda) |v|^2}
$$
for $\lambda>0$ and $\omega \in \mathbb{S}^{m-1}$, see \cite{Cy,Ra}. Good estimates on the heat kernel $q_t$ are known, see \cite{YZ}.

It is well known (see e.g. Strichartz \cite{RSt}) that an essential role is played by the Hermite operators $H(\lambda)=-\Delta+\lambda^2|x|^2$ in the joint spectral theory of the sublaplacian $\mathcal{L}$ and $\frac{\partial}{\partial t}$ on the Heisenberg group. In view of the relation $(\mathcal{L}f)^{\widehat{}}(\lambda)=\widehat{f}(\lambda)H(\lambda)$, the spectral theorem for $\mathcal{L}$ takes the form 
$$
\mathcal{L}f(v,t)=(2\pi)^{-n-1}\int_{-\infty}^{\infty} \operatorname{tr}\big(\pi_{\lambda}(v,t)^*\widehat{f}(\lambda)H(\lambda)\big)|\lambda|^n\,d\lambda.
$$

In calculating the trace we make use of the orthonormal basis $\Phi_{\alpha}^\lambda$, $\alpha\in \Na^n$  of $L^2(\R^n)$, introduced in Subsection \ref{subsec:reprH}, consisting of the Hermite functions which are eigenfunctions of the Hermite operator $H(\lambda)$ with eigenvalues $(2|\alpha|+n)|\lambda|$. Thus, 
$$
\mathcal{L}f(v,t)=(2\pi)^{-n-1}\int_{-\infty}^{\infty} \Big(\sum_{\alpha\in \Na^n}\big((2|\alpha|+n)|\lambda|\big)\big(\pi_{\lambda}(v,t)^*\widehat{f}(\lambda)\Phi_{\alpha}^{\lambda}, \Phi_{\alpha}^{\lambda}\big)\Big)|\lambda|^n\,d\lambda.
$$
From the above it is clear that the (pure) fractional powers of $\mathcal{L}$ can be defined by 
$$
\mathcal{L}^sf(v,t)=(2\pi)^{-n-1}\int_{-\infty}^{\infty} \Big(\sum_{\alpha\in \Na^n}\big((2|\alpha|+n)|\lambda|\big)^s\big(\pi_{\lambda}(v,t)^*\widehat{f}(\lambda)\Phi_{\alpha}^{\lambda}, \Phi_{\alpha}^{\lambda}\big)\Big)|\lambda|^n\,d\lambda.
$$
However, for several reasons it is convenient to work with the conformally invariant fractional powers  
$$
\mathcal{L}_sf(v,t)=(2\pi)^{-n-1}\int_{-\infty}^{\infty} \Big(\sum_{\alpha\in \Na^n}\frac{\Gamma\big(\frac{2|\alpha|+n+1+s}{2}\big)}{\Gamma\big(\frac{2|\alpha|+n+1-s}{2}\big)}(2|\lambda|)^s\big(\pi_{\lambda}(v,t)^*\widehat{f}(\lambda)\Phi_{\alpha}^{\lambda}, \Phi_{\alpha}^{\lambda}\big)\Big)|\lambda|^n\,d\lambda.
$$
Symbolically, we can write them as 
$$
\mathcal{L}_s=\frac{\Gamma\big(\frac{\mathcal{L}(-Z^2)^{-\frac12}+1+s}{2}\big)}{\Gamma\big(\frac{\mathcal{L}(-Z^2)^{-\frac12}+1-s}{2}\big)}2^s(-Z^2)^{s/2}, \qquad Z^2=\frac{\partial^2}{\partial t^2}.
$$
Observe that $\mathcal{L}_s$ is defined for all values of $s$ for which $\frac{(2k+n)+1\pm s}{2}$ is not a pole of $\Gamma(x)$ for any $k=0,1,2,\ldots$.

The conformally invariant fractional powers $\mathcal{L}_s$ have been studied by several authors (\cite{BFM, BOO, FL, JW}). In an earlier paper \cite{RT} we investigated Hardy type inequalities for $\mathcal{L}_s$ on Heisenberg groups. 
Apart from being conformally invariant, the fractional powers $\mathcal{L}_s$ have the added advantage of having explicit fundamental solutions. As proved in Cowling-Haagerup \cite{CH}, in the more general setting of $H$-type groups, a fundamental solution for $\mathcal{L}_s$ on the Heisenberg groups $\H^n$ is given by the function $C_{n,s}|(v,t)|^{-Q+2s}$ where $Q=2n+2$ is the homogeneous dimension of $\H^n$ and $|(v,t)|$ is the norm defined by $|(v,t)|^4=|v|^4+16t^2$. Note that $|(v,t)|$ is homogeneous of degree one under the nonisotropic dilation $(v,t)\to (\delta v, \delta^2 t)$, $\delta>0$. 

The fractional powers $\mathcal{L}_s$ of the sublaplacian on $H$-type groups $N$ are defined as in the case of $\H^n$ by
$$
\mathcal{L}_s=\frac{\Gamma\big(\frac{\mathcal{L}(-\Delta_z)^{-\frac12}+1+s}{2}\big)}{\Gamma\big(\frac{\mathcal{L}(-\Delta_z)^{-\frac12}+1-s}{2}\big)}2^s(-\Delta_z)^{s/2}, \qquad \Delta_z=\sum_{j=1}^mZ_j^2.
$$
The spectral resolution of $\mathcal{L}$ can be written down using the connection between $N$ and the Heisenberg groups $\H_{\omega}^n$, $\omega\in \mathbb{S}^{m-1}$. More explicitly, we have 
\begin{multline*}
\mathcal{L}_sf(v,z)\\=(2\pi)^{-n-m}\int_{0}^{\infty} \int_{\mathbb{S}^{m-1}}\Big(\sum_{\alpha\in \Na^n}\frac{\Gamma\big(\frac{2|\alpha|+n+1+s}{2}\big)}{\Gamma\big(\frac{2|\alpha|+n+1-s}{2}\big)}(2\lambda)^s\big(\pi_{\lambda,\omega}(v,z)^*\widehat{f}(\lambda)\Phi_{\alpha}^{\lambda}, \Phi_{\alpha}^{\lambda}\big)\Big)\,d\sigma(\omega)\lambda^{n+m-1}\,d\lambda.
\end{multline*}
The non-isotropic dilations on $N$ are also given by $(v,z)\to (\delta v,\delta^2 z)$, $\delta>0$ and the homogeneous dimension is given by $Q=2(n+m)$. 

\subsection{Solvable extensions of $H$-type groups}
\label{subsec:epsub}

As we have mentioned in the Introduction the connection between the extension problem for $\mathcal{L}$ on $N$ and the eigenfunction equation for $\Delta_S$ on $S=AN$ will be exploited. Recall that $A=\R^+$ acts on $N$ by non isotropic  dilations and hence we can form the semidirect product $S$. The left invariant vector fields on $S$ are given by $E_j=\sqrt{\rho}X_j$, $T_k=\rho Z_k$ and $H=\rho\partial_{\rho}$ for $j=1,2,\ldots, 2n$, $k=1,2,\ldots,m$. The left Haar measure on $S$ is given by $\rho^{-n-m-1}\,dv\,dz\,d\rho$. 

When $G$ is a connected, simply connected rank one semisimple Lie group with Iwasawa decomposition $G=KAN$, the nilpotent part $N$ turns out to be of $H$-type and Riemannian symmetric space $X=G/K$ can be identified with the solvable extension $AN$ of $N$. The Laplace--Beltrami operator $\Delta_X$ on $X$ is then just $\Delta_S$ on $AN$.

Eigenfunctions of the Laplace--Beltrami operator $\Delta_X$ on non compact Riemannian symmetric spaces, not necessarily of rank one, have been studied by Helgason \cite{H} and others culminating in the celebrated theorem of Kashiwara et al \cite{KKMOT}. Using this result, Ben Sa\"id et al \cite{BsOS} gave a simpler characterisation of eigenfunctions of $\Delta_X$ satisfying Hardy type conditions. In the general context of $AN$ groups when $N$ is not necessarily associated to a semisimple Lie group, there is no analogue of the theorem of Kashiwara et al. However, recently Damek and Kumar \cite{DK} have proved an analogue of the theorem of Ben Sa\"id et al for all solvable extensions of $H$-type groups.

By slightly modifying the standard notation used in the literature, let us write
$$
\mathcal{P}_s(v,z,\rho)=\rho^{\frac{n+m+s}{2}}\big((\rho^2+|v|^2)^2+16|z|^2\big)^{-\frac{n+m+s}{2}}
$$
for the Poisson kernel associated to the group $S=AN$.
\begin{thm}[Damek--Kumar]
\label{thm:DK}
Assume that $u$ is an eigenfunction of the operator $\Delta_S$ with eigenvalue $-\frac14 \big((n+m)^2-s^2\big)$, $s>0$. Then there exists $f\in L^p(N)$, $1<p\le \infty$ such that $u(v,z,\rho)=f\ast \mathcal{P}_s(v,z,\rho)$ if and only if $u$ satisfies the uniform estimates
$$
\big(\int_N|u(v,z,\rho)|^p\,dv\,dz\big)^{1/p}\le C\rho^{\frac{n+m-s}{2}}
$$
for all $\rho>0$.
\end{thm}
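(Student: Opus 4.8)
The plan is to use the left $N$-invariance of $\Delta_S$ together with the group Fourier transform on $N$, which converts the eigenvalue equation into a family of ordinary differential equations in $\rho$ whose admissible solutions are exactly the Fourier components of $\mathcal{P}_s$.

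For the necessity direction I would first record the form of $\Delta_S$ in the coordinates $(v,z,\rho)$. Since $E_j=\sqrt{\rho}\,X_j$, $T_k=\rho Z_k$ and $H=\rho\partial_\rho$, one gets
$$
\Delta_S=-\rho\mathcal{L}+\rho^2\Delta_z+\rho^2\partial_\rho^2+\big(1-(n+m)\big)\rho\partial_\rho .
$$
A direct computation (equivalently, the change of variables relating the extension problem \eqref{eq:ep} to the eigenfunction equation) shows that $\mathcal{P}_s$ is an eigenfunction of $\Delta_S$ with eigenvalue $-\tfrac14((n+m)^2-s^2)$. Because $\Delta_S$ is left invariant and $f\ast\mathcal{P}_s$ is a superposition of left $N$-translates of $\mathcal{P}_s$, it is again an eigenfunction with the same eigenvalue whenever the convolution is defined, and the uniform bound $\|f\ast\mathcal{P}_s(\cdot,\rho)\|_{L^p(N)}\le C\rho^{(n+m-s)/2}$ follows from convolution estimates adapted to the non-isotropic dilation scaling of the kernel. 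This is the easy half.

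For the sufficiency direction I would apply the Fourier transform $\pi_{\lambda,\omega}$ in the $N$-variables. As $\mathcal{L}$ is carried to the Hermite operator $H(\lambda)$, acting by $(2k+n)\lambda$ on the $k$-th Hermite space, and $\Delta_z$ to $-\lambda^2$, the equation decouples on each joint eigenspace into the scalar ODE
$$
\rho^2 g''+\big(1-(n+m)\big)\rho g'-\big[(2k+n)\lambda\,\rho+\lambda^2\rho^2\big]g=-\tfrac14\big((n+m)^2-s^2\big)g .
$$
Its indicial roots at $\rho=0$ are $\gamma_\pm=\tfrac{n+m\pm s}{2}$, while at $\rho=\infty$ the two independent solutions behave like $e^{\pm\lambda\rho}$; the equation is of Whittaker type. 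The decisive point is that the polynomial bound $\|u(\cdot,\rho)\|_p\le C\rho^{\gamma_-}$ excludes the exponentially growing solution $e^{+\lambda\rho}$, so on every component $u$ must be a scalar multiple of the unique admissible (exponentially decaying) solution $\Psi_{k,\lambda}$, whose behaviour at $\rho=0$ is $c_-(\lambda,k)\rho^{\gamma_-}+c_+(\lambda,k)\rho^{\gamma_+}+\cdots$ with $c_-\neq0$. Comparing $\Psi_{k,\lambda}$ with the Fourier transform of $\varphi_{s,\rho}$, computed through the Laguerre calculus of Subsection \ref{subsec:reprH}, identifies it up to normalisation with $\widehat{\mathcal{P}_s}$. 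This forces $u=f\ast\mathcal{P}_s$ at the level of Fourier components, with $f$ the dominant boundary coefficient, and in particular settles $p=2$ by Plancherel.

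To produce an honest datum $f\in L^p(N)$ for all $1<p\le\infty$, I would set $f_\rho:=\rho^{-\gamma_-}u(\cdot,\rho)$, which is bounded in $L^p(N)$ by hypothesis. For $1<p\le\infty$, $L^p$ is a dual space with weak-$\ast$ sequential compactness, so $f_{\rho_j}\rightharpoonup f$ along some $\rho_j\to0$ with $\|f\|_p\le C$; the componentwise analysis then shows that $u-f\ast\mathcal{P}_s$ has vanishing dominant coefficient while remaining admissible, hence is identically zero. The hard part, and the reason this is a theorem of Damek and Kumar rather than a formal corollary of the ODE, is exactly this uniqueness-and-synthesis step away from $p=2$: one must control the Whittaker solutions $\Psi_{k,\lambda}$ and the ratio $c_+/c_-$ (a Harish--Chandra type $c$-function) uniformly in $(\lambda,\omega,k)$, in order to know that the weak-$\ast$ limit genuinely reconstructs $u$ through the Poisson integral, that no eigenfunction is invisible to the boundary, and that the endpoint $p=\infty$ is handled correctly. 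In the symmetric-space case this is provided by the theorem of Kashiwara et al.; for general solvable extensions of $H$-type groups no such machinery exists, and these boundary estimates — via a Fatou/maximal-function analysis of $f\ast\mathcal{P}_s$ — must be built by hand. That boundary analysis is where essentially all the difficulty lies.
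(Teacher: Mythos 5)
Your proposal should first be measured against what the paper actually does with this statement: nothing. Theorem \ref{thm:DK} is imported verbatim from Damek--Kumar \cite{DK}; the paper offers no proof and only remarks that the proof in \cite{DK} rests on a maximum principle for a degenerate elliptic operator resembling the extension operator \eqref{eq:opera} --- a comparison/sub-supersolution argument, not Fourier analysis. Your route instead mirrors the symmetric-space strategy of Ben Sa\"id--Oshima--Shimeno \cite{BsOS}, which ultimately rests on the boundary-value machinery of Kashiwara et al.\ \cite{KKMOT}; but the paper itself points out (Subsection \ref{subsec:epsub}) that no analogue of \cite{KKMOT} exists for general solvable extensions of $H$-type groups, which is precisely why \cite{DK} had to argue differently. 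So your plan is not a reconstruction of the known proof; it is a proposal to rebuild, by hand, the machinery whose absence motivated \cite{DK}.

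The setup you give is correct as far as it goes: the coordinate expression $\Delta_S=-\rho\mathcal{L}+\rho^2\Delta_z+\rho^2\partial_\rho^2+(1-(n+m))\rho\partial_\rho$, the indicial roots $\gamma_\pm=\frac{n+m\pm s}{2}$, and the Whittaker-type behaviour $e^{\pm\lambda\rho}$ at infinity all check out. But the proof stops exactly where the theorem begins, and you say so yourself. Concretely, two steps are asserted rather than proved. First, for $p\neq 2$ --- and above all for $p=\infty$, which is the most important case here --- the operator-valued Fourier transform of the slices $u(\cdot,\rho)$ does not exist; the ``components'' are at best distributions obtained by pairing against matrix coefficients, and for $p=\infty$ even the pairing in the central variable fails to converge absolutely. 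The claim that the polynomial bound kills the $e^{+\lambda\rho}$ branch ``on every component'' is therefore not a corollary of the scalar ODE: making it meaningful and true uniformly in $(\lambda,\omega,k)$ (where the coefficient $c_-(\lambda,k)$, a $c$-function, may degenerate) is the substance of the theorem. Second, the weak-$\ast$ extraction produces a candidate $f$, but the conclusion $u=f\ast\mathcal{P}_s$ needs a uniqueness statement --- an eigenfunction obeying the uniform bound whose boundary distribution vanishes is identically zero --- and your phrase ``the componentwise analysis then shows'' simply invokes it. In \cite{DK} this is exactly what the maximum principle delivers, by trapping the difference between $u$ and the Poisson integral of its boundary data between sub- and supersolutions. (A smaller point worth fixing: the necessity direction requires checking $\|\mathcal{P}_s(\cdot,\rho)\|_{L^1(N)}\asymp\rho^{\frac{n+m-s}{2}}$ in the correct $A$-coordinate normalisation; with the non-isotropic scaling $v\mapsto\rho v$, $z\mapsto\rho^2 z$ the naive computation produces the exponent $\frac{n+m-3s}{2}$, so the coordinate conventions relating $\mathcal{P}_s$ to the extension kernel $\rho^{2s}\varphi_{s,\rho}$ must be tracked carefully.) In short: correct skeleton, honest flagging of the gap, but the gap is the theorem.
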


The proof of this result given in \cite{DK} makes use of the maximum principle for an operator which resembles the extension operator we study in this article. We remark that an independent characterisation of solutions of the extension problem will naturally lead to a characterisation of eigenfunctions of $\Delta_S$ on solvable extensions of $H$-type groups. 

\section{An extension problem for the sublaplacian}
\label{sec:ep}

\subsection{Characterisation of solutions of the extension problem: proof of Theorem~\ref{thm:cha}}
\label{sub:cha}

The solution of the extension problem \eqref{eq:ep} with initial condition $ f \in L^p(N) $ given by $ u(v,z,\rho) =C_1(n,m,s) \rho^{2s}f \ast \varphi_{s,\rho}(v,z) $ satisfies the uniform estimate $ \|u(\cdot,\rho)\|_p \leq \| f\|_p$ trivially. Indeed, we can use Young's inequality and the fact that 
$$
\rho^{2s}\varphi_{s,\rho}(v,z)=\rho^{-2(n+m)}\varphi_{s,1}(\rho^{-1}v,\rho^{-2}z)
$$ 
is an integrable function, and therefore an approximate identity.

We will now show that this property characterises all solutions of the extension problem with $ L^p $ initial condition. We will prove this by connecting solutions of the  extension problem with eigenfunctions of the Laplace--Beltrami operator $ \Delta_S $  given in \eqref{eq:LBel} on the solvable extension $ S $ of $ N.$

Let us recall some facts already explained in the Introduction. If $u $ is a solution of the extension problem \eqref{eq:ep} then it is easy to see that the function $ w(v,z,\rho) = u(2^{-1/2}v, 2^{-1}z, (2\rho)^{1/2}) $ solves the equation
$$ \big( -\mathcal{L}+\partial_\rho^2 + (1-s) \partial_\rho +\rho \Delta_z \big)w(v,z,\rho) = 0.$$
Recalling the definition of $ \Delta_S, $ another calculation shows that $ \widetilde{w}(v,z,\rho) = \rho^{\frac{(n+m-s)}{2}}w(v,z,\rho) $ solves the eigenvalue problem
$$  \Delta_S \widetilde{w}(v,z,\rho)  = -\frac{1}{4}((n+m)^2 -s^2) \widetilde{w}(v,z,\rho), \quad (v,z,\rho) \in S.$$
We also note that the estimate $ \|u(\cdot,\rho)\|_p \leq C \| f\|_p $ translates into the condition
$$
\rho^{-\frac{(n+m-s)}{2}p}\int_{N} |\widetilde{w}(v,z,\rho)|^p dv dz \leq C ,\quad  \rho > 0.
$$
As explained in Subsection \ref{subsec:epsub}, eigenfunctions of the Laplace-Beltrami operator $ \Delta_S $ satisfying the above conditions have been characterised by Ben Sa\"id et al for Iwasawa $N$ groups in \cite{BsOS} and by Damek-Kumar in \cite{DK} for general $H$-type groups. Using their results we can get the following result, thereby completing the proof of Theorem \ref{thm:cha}.

\begin{thm} 
\label{thm:char}
Let $ 1 < p \leq \infty.$ Then any solution of the extension problem \eqref{eq:ep} which satisfies the uniform estimates $ \|u(\cdot,\rho)\|_p \leq C, \rho > 0$ is of the form $ f \ast \varphi_{s,\rho} $ for some $ f \in L^p(N).$
\end{thm}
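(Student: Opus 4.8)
The plan is to read off Theorem~\ref{thm:char} from the Damek--Kumar characterisation (Theorem~\ref{thm:DK}) through the correspondence between solutions of \eqref{eq:ep} and eigenfunctions of $\Delta_S$ that has already been set up. Concretely, starting from a solution $u$ of \eqref{eq:ep} obeying $\|u(\cdot,\rho)\|_p\le C$ for all $\rho>0$, I form
\[
\widetilde{w}(v,z,\rho)=\rho^{\frac{n+m-s}{2}}\,u\big(2^{-1/2}v,2^{-1}z,(2\rho)^{1/2}\big).
\]
As recorded just before the statement, $\widetilde{w}$ is an eigenfunction of $\Delta_S$ (given in \eqref{eq:LBel}) with eigenvalue $-\tfrac14\big((n+m)^2-s^2\big)$, and the uniform bound on $u$ translates precisely into the growth estimate $\big(\int_N|\widetilde{w}(v,z,\rho)|^p\,dv\,dz\big)^{1/p}\le C\rho^{\frac{n+m-s}{2}}$, which is exactly the hypothesis of Theorem~\ref{thm:DK} for the same exponent $p$.

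Theorem~\ref{thm:DK} then produces $f\in L^p(N)$ with $\widetilde{w}=f\ast\mathcal{P}_s$. Since the convolution is taken in the $N$-variables with $\rho$ frozen and $\mathcal{P}_s(v,z,\rho)=\rho^{\frac{n+m+s}{2}}\varphi_{s,\rho}(v,z)$, this reads $\widetilde{w}=\rho^{\frac{n+m+s}{2}}\,(f\ast\varphi_{s,\rho})$, whence $w:=\rho^{-\frac{n+m-s}{2}}\widetilde{w}=\rho^{s}(f\ast\varphi_{s,\rho})$. I then invert the change of variables, writing $u(v,z,\rho)=w\big(2^{1/2}v,2z,\rho^2/2\big)$, and use the homogeneity $\varphi_{s,\delta\rho}(\delta v,\delta^2 z)=\delta^{-2(n+m+s)}\varphi_{s,\rho}(v,z)$ together with the dilation rule for convolutions on $N$ to recover $u$ as a fixed constant times $\rho^{2s}$ times the convolution of a non-isotropic dilate of $f$ against $\varphi_{s,\rho}$. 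Because such a dilation is a bijection of $L^p(N)$ (an isometry up to a constant), the resulting function again lies in $L^p(N)$, so $u$ is of the asserted form $f\ast\varphi_{s,\rho}$ for some $f\in L^p(N)$.

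The conceptual heart, already dispatched in the discussion preceding the statement, is the verification that the substitution intertwines the extension operator of \eqref{eq:ep} with the shifted $\Delta_S$ operator and converts the $L^p$ bound into the Damek--Kumar growth condition; this is the step demanding care, since the change of the radial variable $\rho\mapsto(2\rho)^{1/2}$ is non-linear and must be balanced against the non-isotropic scaling of $\mathcal{L}$ and $\Delta_z$. In the converse passage the only delicate issue is bookkeeping: tracking the explicit constant (which is pinned down as $C_1(n,m,s)$ of \eqref{eq:C1} once the convergence $u\to c'f$ is quantified in the stronger Theorems~\ref{thm:cha} and \ref{thm:sol}) and the dilation of $f$ generated when $\mathcal{P}_s$ is rewritten through $\varphi_{s,\rho}$. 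Finally, the range $1<p\le\infty$ is exactly that of Theorem~\ref{thm:DK}, so no endpoint obstruction arises here; the case $p=1$, where $f$ must be replaced by a complex Borel measure, lies outside this argument and is recorded separately in Theorem~\ref{thm:cha}.
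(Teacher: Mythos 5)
Your proposal is correct and follows essentially the same route as the paper: the paper proves Theorem~\ref{thm:char} by exactly this reduction, passing from a solution $u$ of \eqref{eq:ep} to $\widetilde{w}(v,z,\rho)=\rho^{\frac{n+m-s}{2}}u(2^{-1/2}v,2^{-1}z,(2\rho)^{1/2})$, noting that the uniform bound $\|u(\cdot,\rho)\|_p\le C$ becomes the growth condition $\big(\int_N|\widetilde{w}(v,z,\rho)|^p\,dv\,dz\big)^{1/p}\le C\rho^{\frac{n+m-s}{2}}$, and invoking the Damek--Kumar characterisation (Theorem~\ref{thm:DK}). If anything, your write-up is slightly more explicit than the paper's, which leaves the inversion of the change of variables and the dilation/homogeneity bookkeeping (including why the non-isotropic dilate of $f$ stays in $L^p(N)$, and the $\rho^{2s}$ normalisation omitted in the statement of Theorem~\ref{thm:char}) to the reader.
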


\subsection{Proof of Theorem \ref{thm:sol}
}
\label{sub:proofThsol}

In this subsection we provide a proof of Theorem \ref{thm:sol}. It will be a consequence of several facts that  will be discussed as we go  along in this subsection.

The first issue to address is the connection with the solution of the extension problem~\eqref{eq:ep}. A solution of the extension problem \eqref{eq:ep} can be written down explicitly by modifying the formula by Stinga and Torrea (see \cite{ST}) for solutions of the extension problem
\begin{equation}
\label{eq:epEc}
( \Delta +\partial_\rho^2 +\frac{1-s}{\rho}\partial_\rho)u(x,\rho) = 0, \qquad u(x,0) = f(x).
\end{equation} 
Let us explain this more precisely.
In \cite{ST} it has been shown that the function $ u $ defined by the equation
$$ u(x,\rho)  = \frac{\rho^s}{2^s \Gamma(s/2)}  \int_0^\infty  e^{-\frac{1}{4t}\rho^2} e^{t\Delta}f(x) t^{-s/2-1} dt $$ solves \eqref{eq:epEc}. Here, $ e^{t\Delta} $ stands for the heat semigroup generated by the Laplacian $ \Delta. $ In a similar way we can show that the function
\begin{equation}
\label{eq:solST}
 u(x,\rho) =\frac{\rho^s}{2^s \Gamma(s/2)}  \int_0^\infty  e^{-\frac{1}{4t}\rho^2} e^{-t\mathcal{L}}f(x) t^{-s/2-1} dt 
\end{equation}
defined in terms of the heat semigroup $ e^{-t\mathcal{L}}$ generated by the sublaplacian solves the extension problem 
\begin{equation}
\label{eq:epR} 
( -\mathcal{L} +\partial_\rho^2 +\frac{1-s}{\rho}\partial_\rho)u(x,\rho) = 0, \qquad  u(x,0) = f(x) .\end{equation}
However, we are interested in the extension problem \eqref{eq:ep},
which is different from the problem~\eqref{eq:epR}.

By modifying the Stinga-Torrea formula \eqref{eq:solST} we can also write down a solution of the extension problem \eqref{eq:ep}. Let  $ p_{t,s}(\rho,z) $ be the heat kernel associated to the generalised sublaplacian
$$ \mathcal{L}(s) := \partial_\rho^2+\frac{1+2s}{\rho}\partial_\rho+\frac{1}{4}\rho^2 \Delta_z $$ on $ \R^+ \times \R^m$.
Let $ q_t(v,z) $ stand for the heat kernel associated to $ \mathcal{L}$. Then it is known that
\begin{equation}
\label{eq:qt}
   \int_{\R^m} q_t(v,z) e^{i \lambda \omega \cdot z} dz = (4\pi)^{-n} \Big(\frac{\lambda}{\sinh(t \lambda)}\Big)^{n} e^{-\frac{1}{4}\lambda \coth(t\lambda) |v|^2}
\end{equation}
for $ \lambda > 0 $ and $ \omega \in S^{m-1}$.
A similar formula is valid for the heat kernel $ p_{t,s}(\rho,w) $ as well:
\begin{equation}
\label{eq:pts}
\int_{\R^m} p_{t,s}(\rho,z) e^{i  \lambda \omega \cdot z} dz = (4\pi)^{-s-1}\Big(\frac{\lambda}{\sinh(t \lambda)}\Big)^{s+1} e^{-\frac{1}{4}\lambda \coth(t\lambda) \rho^2}.
\end{equation}
Note that the kernels $p_{t,s}$ and $q_t$ are normalized in such a way  that 
$$
\int_0^{\infty}\int_{\R^m}p_{t,s}(\rho,z)\,d\rho\, dz=1=\int_Nq_{t}(v,z)\,dv\, dz.
$$
The solution of the extension problem \eqref{eq:ep} can be written down explicitly in terms of the function $ e^{-t\mathcal{L}}f.$ Indeed, we have the following analogue of the Stinga--Torrea formula.

\begin{thm}
\label{thm:STf} For $ f \in L^p(N)$, $1 \leq p \leq \infty $, a solution of the extension problem \eqref{eq:ep} is given by
\begin{equation*}
u(v,z,\rho) = \frac{4\pi^{s+1}}{\Gamma(s)}\rho^{2s}  \int_0^\infty \int_{\R^m}  p_{t,s}(\rho, w) (e^{-t\mathcal{L}}f)(v,z-w) dw \, dt.
\end{equation*}
As $ \rho $ tends to zero, the solution $ u(\cdot,\rho) $ converges to $ f $ in $ L^p(N)$ for $ 1 \leq p < \infty.$
\end{thm}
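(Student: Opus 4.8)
The plan is to verify directly that the stated $u$ solves \eqref{eq:ep} for $\rho>0$ and then to identify $u(\cdot,\rho)$ as the convolution of $f$ with an approximate identity on $N$. Throughout, write $c_s=4\pi^{s+1}/\Gamma(s)$, let $\ast_z$ denote convolution in the central variable over $\R^m$, and set $g(v,z,t)=(e^{-t\mathcal{L}}f)(v,z)$, so that the claimed solution reads $u=c_s\,\rho^{2s}v$ with
\[
v(v,z,\rho)=\int_0^\infty p_{t,s}(\rho,\cdot)\ast_z g(v,\cdot,t)\,dt.
\]
The first step is an algebraic reduction: a direct computation of the $\rho$-derivatives shows that for any $v$,
\[
\big(-\mathcal{L}+\partial_\rho^2+\tfrac{1-2s}{\rho}\partial_\rho+\tfrac14\rho^2\Delta_z\big)(\rho^{2s}v)=\rho^{2s}\big(\mathcal{L}(s)-\mathcal{L}\big)v,
\]
since the factor $\rho^{2s}$ converts the weight $\tfrac{1-2s}{\rho}$ into $\tfrac{1+2s}{\rho}$, which is exactly the weight in $\mathcal{L}(s)$. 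Hence it suffices to show $(\mathcal{L}(s)-\mathcal{L})v=0$ for $\rho>0$.

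To establish this I would move each operator under the integral onto the factor it differentiates. Because $g$ is independent of $\rho$ and $\ast_z$ is translation invariant, all the $(\rho,z)$-derivatives in $\mathcal{L}(s)$ fall on $p_{t,s}$, and the heat equation $\mathcal{L}(s)p_{t,s}=\partial_t p_{t,s}$ gives $\mathcal{L}(s)v=\int_0^\infty(\partial_t p_{t,s})\ast_z g\,dt$. On the other hand $\mathcal{L}$, being left invariant, commutes with $\ast_z$ and acts on $g=e^{-t\mathcal{L}}f$ through $\mathcal{L}g=-\partial_t g$, so $\mathcal{L}v=-\int_0^\infty p_{t,s}\ast_z\partial_t g\,dt$. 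Subtracting and recombining the $t$-derivative,
\[
(\mathcal{L}(s)-\mathcal{L})v=\int_0^\infty\partial_t\big(p_{t,s}(\rho,\cdot)\ast_z g(v,\cdot,t)\big)\,dt=\big[p_{t,s}(\rho,\cdot)\ast_z g(v,\cdot,t)\big]_{t=0}^{t=\infty}.
\]
Both boundary terms vanish: letting $\lambda\to0$ in \eqref{eq:pts} gives $\|p_{t,s}(\rho,\cdot)\|_{L^1(\R^m)}=(4\pi)^{-s-1}t^{-s-1}e^{-\rho^2/(4t)}$, which for fixed $\rho>0$ tends to $0$ both as $t\to0^+$ (Gaussian decay beats $t^{-s-1}$) and as $t\to\infty$, while $\|g(\cdot,t)\|_\infty\le\|f\|_\infty$ by contractivity of the heat semigroup. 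This proves that $u$ solves \eqref{eq:ep} in the interior.

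For the boundary behaviour I would first rewrite $u$ as a single convolution on $N$. Using $e^{-t\mathcal{L}}f=f\ast q_t$ and the fact that $\ast_z$ is right convolution by the central measure $\delta_0\otimes p_{t,s}(\rho,\cdot)$, associativity gives $u(\cdot,\rho)=f\ast\Psi_\rho$ with $\Psi_\rho=c_s\,\rho^{2s}\int_0^\infty q_t\ast_z p_{t,s}(\rho,\cdot)\,dt$. The key point is that $\Psi_\rho$ is a genuine approximate identity: it is nonnegative, being built from heat kernels, and it integrates to one. Indeed, integrating over $N$ factorizes the convolution, and with $\int_N q_t=1$ and $\int_{\R^m}p_{t,s}(\rho,w)\,dw=(4\pi)^{-s-1}t^{-s-1}e^{-\rho^2/(4t)}$ one is left with $c_s\rho^{2s}(4\pi)^{-s-1}\int_0^\infty t^{-s-1}e^{-\rho^2/(4t)}\,dt=c_s\rho^{2s}(4\pi)^{-s-1}4^s\rho^{-2s}\Gamma(s)=1$, which is exactly how the constant $4\pi^{s+1}/\Gamma(s)$ is forced. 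Finally the parabolic scaling of $q_t$ and $p_{t,s}$ yields the dilation identity $\Psi_\rho(v,z)=\rho^{-2(n+m)}\Psi_1(\rho^{-1}v,\rho^{-2}z)$, so $\Psi_\rho$ concentrates at the identity of $N$ as $\rho\to0$. The standard approximate-identity argument then gives $u(\cdot,\rho)=f\ast\Psi_\rho\to f$ in $L^p(N)$ for every $f\in L^p(N)$, $1\le p<\infty$.

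The main obstacle is the analytic justification of the formal manipulations in the second paragraph: differentiating under the $t$-integral and integrating by parts require integrable majorants for $(\partial_t p_{t,s})\ast_z g$ and $p_{t,s}\ast_z\partial_t g$ on $(0,\infty)$, together with the vanishing of the boundary terms. I would handle this by first taking $f\in C_0^\infty(N)$, where the known Gaussian-type bounds on $q_t$ and the explicit transform \eqref{eq:pts} of $p_{t,s}$ supply the required estimates near $t=0$ and $t=\infty$, and then pass to general $f\in L^p$ by density, using that for each fixed $\rho>0$ the kernel $\Psi_\rho$ is smooth and integrable, so that $u(\cdot,\rho)$ depends continuously on $f$.
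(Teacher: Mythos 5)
Your proposal is correct and follows essentially the same route as the paper's proof: the conjugation identity converting the weight $\frac{1-2s}{\rho}$ into $\frac{1+2s}{\rho}$, the transfer of the $t$-derivative between $e^{-t\mathcal{L}}f$ and $p_{t,s}$ via their respective heat equations (the paper phrases this as integration by parts in $t$, you as integrating a total $t$-derivative), and the identification of $u(\cdot,\rho)=f\ast\Phi_{s,\rho}$ as convolution with an approximate identity of total mass one via the identical computation $\int_{\R^m}p_{t,s}(\rho,z)\,dz=(4\pi)^{-s-1}t^{-s-1}e^{-\rho^2/(4t)}$ and the dilation identity $\Phi_{s,\rho}(v,z)=\rho^{-2(n+m)}\Phi_{s,1}(\rho^{-1}v,\rho^{-2}z)$. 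If anything, you are slightly more careful than the paper in explicitly verifying that the boundary terms at $t=0$ and $t=\infty$ vanish, which the paper's integration by parts leaves implicit.
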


\begin{proof} Applying $ \mathcal{L} $ to the function $ u $ and noting that $  U(v,z,t) = e^{-t\mathcal{L}}f(v,z) $ satisfies the heat equation $ -\mathcal{L}U(v,z,t) = \partial_t U(v,z,t) $ we see that 
$$ \mathcal{L}u(v,z,\rho) = -\frac{4\pi^{s+1}}{\Gamma(s)}\rho^{2s}  \int_0^\infty \int_{\R^m}  p_{t,s}(\rho, w) \partial_t U(v,z-w,t) dw dt.$$ 
Integrating by parts in the $t $ variable we can transfer the $ t $ derivative to $ p_{t,s}(\rho,w)$ and since it satisfies the heat equation associated to $ \mathcal{L}(s) $ we obtain
$$  \mathcal{L}u(v,z,\rho) = \frac{4\pi^{s+1}}{\Gamma(s)}\rho^{2s}  \big(\partial_\rho^2+\frac{1+2s}{\rho}\partial_\rho+\frac{1}{4}\rho^2 \Delta_z  \big) \int_0^\infty \int_{\R^m}  p_{t,s}(\rho, w) U(v,z-w,t) dw  dt.$$
A simple calculation shows that 
$$ \rho^{2s}  \big(\partial_\rho^2+\frac{1+2s}{\rho}\partial_\rho+\frac{1}{4}\rho^2 \Delta_z  \big)V(v,z,\rho) =   \big(\partial_\rho^2+\frac{1-2s}{\rho}\partial_\rho+\frac{1}{4}\rho^2 \Delta_z  \big)(\rho^{2s}V(v,z,\rho))$$ 
for any function $V(v,z,\rho). $ This proves that $ u $ satisfies the extension problem. In order to prove the convergence of the solution to the initial condition we make use of several properties of the heat kernels associated to $ \mathcal{L} $ and $ \mathcal{L}(s).$
By defining 
\begin{equation} 
\label{eq:homo}
\Phi_{s,\rho}(v,z) =\frac{4\pi^{s+1}}{\Gamma(s)}\rho^{2s}  \int_0^\infty \int_{\R^m}  p_{t,s}(\rho, w) q_t(v,z-w) dw  dt
\end{equation}
we see that the solution $ u $ is given by the convolution 
\begin{equation} 
\label{eq:convo}
u(v,z) = f \ast \Phi_{s,\rho}(v,z). 
\end{equation}
From \eqref{eq:qt} and \eqref{eq:pts} it is easy to see that the heat kernels satisfy the homogeneity conditions 
$$ p_{t,s}(r\rho,r^2 z) = r^{-2m-2(s+1)}p_{r^{-2}t,s}(\rho,z),\qquad q_t(rv,r^2z)= r^{-2m-2n}q_{r^{-2}t}(v,z)$$  for $ r >0.$ 
Using this in \eqref{eq:homo} we observe that  $ \Phi_{s,\rho}(v,z) = \rho^{-2(n+m)} \Phi_{s,1}(\rho^{-1}v,\rho^{-2}z).$ Moreover, as $ p_{t,s} $ and $ q_t $ are Schwartz class functions, $ \Phi_{s,1} \in L^1(N).$ 
Therefore, $ \Phi_{s,\rho} $ is an approximate identity. Observe that 
$\|\Phi_{s,1}\|_1=1 $. Indeed, 
$$
\int_N\Phi_{s,1}(v,z)\,dv\,dz =\frac{4\pi^{s+1}}{\Gamma(s)}  \int_0^\infty \Big(\int_{\R^m}  p_{t,s}(1, z)\,dz\Big)\Big(\int_N q_t(v,z) \,dv\,dz\Big)\,dt.
$$
Since $\int_Nq_t(v,z)\,dv\,dz=1$ and 
$$
\int_{\R^m}p_{t,s}(1,z)\,dz=(4\pi)^{-s-1}t^{-s-1}e^{-\frac1{4t}},
$$
we have
$$
\int_N\Phi_{s,1}(v,z)\,dv\,dz=\frac{4\pi^{s+1}}{\Gamma(s)}  \int_0^\infty t^{-s-1}e^{-\frac1{4t}}\,dt=1.
$$
Hence we immediately get the $ L^p(N) $ convergence of $ u = f \ast \Phi_{s,\rho} $ to $ f $ as $ \rho $ tends to $ 0 $. 
\end{proof}

It is possible to calculate the kernel $ \Phi_{s,1}(v,z)$ explicitly. Using \eqref{eq:qt} and \eqref{eq:pts} we see that
\begin{align}  
\label{eq:Phi}
\notag\Phi_{s,1}(v,z)& = (4\pi)^{-n-s-1} (2\pi)^{-m} \int_0^\infty  \Big(\int_{\R^m}  e^{-i z \cdot x} \Big(\frac{|x|}{\sinh(t |x|)}\Big)^{n+s+1} e^{-\frac{1}{4}|x| \coth(t|x|) (1+|v|^2)} \,dx \Big)\,  dt\\
&= (4\pi)^{-n-s-1} (2\pi)^{-m} \int_{\R^m}    \Big(\int_0^\infty\frac{|x|}{\sinh(t |x|)}\Big)^{n+s+1} e^{-\frac{1}{4}|x| \coth(t|x|) (1+|v|^2)}\,  dt  \Big) e^{-i z \cdot x}\,dx.
\end{align}
In the case of the Heisenberg group $ \H^n = \C^n \times \R $ we have computed the above integral explicitly in \cite[Proposition 4.2]{RT}. We recall the result in the following Theorem \ref{thm:com} for the convenience of the reader.

\begin{thm}[Roncal--Thangavelu]
\label{thm:com} 
For $ (v,w) \in \H^n $ and $ 0< s < 1 $ we have
$$  \int_0^\infty  \int_{-\infty}^\infty   e^{-i \lambda w} \Big(\frac{\lambda}{\sinh(t \lambda)}\Big)^{n+s+1} e^{-\frac{1}{4}\lambda \coth(t \lambda) (1+|v|^2)} d\lambda dt
= c_{n,s} ((1+|v|^2)^2+16w^2)^{-\frac{(n+1+s)}{2}},$$
where the constant $c_{n,s}$ is given by
$$
c_{n,s}=2^{n-1+3s}\pi^{-n-1}\Gamma\Big(\frac{n+s+1}{2}\Big)^2.
$$
\end{thm}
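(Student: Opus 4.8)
The plan is to collapse the double integral to elementary Gamma and Beta integrals. Throughout I write $a=1+|v|^2>0$ and $\mu=n+s+1$, and I note that the integrand is even in $\lambda$, so that $e^{-i\lambda w}$ may be replaced by $\cos(\lambda w)$ and one may integrate the even part over all of $\R$.

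First I would perform the $t$-integration for fixed $\lambda$. The substitution $u=t\lambda$ turns $\int_0^\infty\big(\tfrac{\lambda}{\sinh(t\lambda)}\big)^{\mu}e^{-\frac{a\lambda}{4}\coth(t\lambda)}\,dt$ into $\lambda^{\mu-1}\int_0^\infty(\sinh u)^{-\mu}e^{-\frac{a\lambda}{4}\coth u}\,du$, and then $r=\coth u$ (so that $(\sinh u)^{-2}\,du=-dr$ and $(\sinh u)^{-\mu}=(r^2-1)^{\mu/2}$) reduces the inner integral to $|\lambda|^{\mu-1}\int_1^\infty(r^2-1)^{\mu/2-1}e^{-\frac{a|\lambda|}{4}r}\,dr$. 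Interchanging the order of integration (legitimate by absolute convergence when $\mu>0$, in particular for $0<s<1$) and evaluating the $\lambda$-integral by means of $\int_0^\infty\lambda^{\mu-1}e^{-\beta\lambda}\,d\lambda=\Gamma(\mu)\beta^{-\mu}$ with $\beta=\tfrac{ar}{4}\pm iw$ (which has positive real part), I arrive at
\[
\Gamma(\mu)\int_1^\infty(r^2-1)^{\mu/2-1}\Big[\big(\tfrac{ar}{4}+iw\big)^{-\mu}+\big(\tfrac{ar}{4}-iw\big)^{-\mu}\Big]\,dr.
\]

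The heart of the matter is this last $r$-integral, which I would evaluate by the rationalising substitution $r=\tfrac{1+u}{1-u}$, $u\in(0,1)$. Writing $\nu=\mu/2$, a direct computation shows that each of the two terms becomes a constant multiple of an incomplete Beta integral $\int_0^{x_\pm}p^{\nu-1}(1-p)^{\nu-1}\,dp=B_{x_\pm}(\nu,\nu)$ with $x_\pm=\tfrac{a\mp 4iw}{2a}$, so that $x_++x_-=1$. The delicate step is the bookkeeping of the complex prefactors: the factor multiplying $B_{x_+}(\nu,\nu)$ is a real constant times $(a+4iw)^{-\mu}c_+^{-\nu}$ with $c_+=\tfrac{a-4iw}{a+4iw}$, and the algebraic identity $(a+4iw)^{-\mu}c_+^{-\nu}=(a^2+16w^2)^{-\nu}$ (together with its conjugate) shows that the two terms share the same real prefactor, proportional to $\big((1+|v|^2)^2+16w^2\big)^{-\nu}$. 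Then the symmetry $p\mapsto 1-p$ of the Beta integrand yields $B_{x_+}(\nu,\nu)+B_{1-x_+}(\nu,\nu)=B(\nu,\nu)=\Gamma(\nu)^2/\Gamma(2\nu)$, collapsing the sum to the announced power. Finally, multiplying back the prefactor $\Gamma(\mu)=\Gamma(2\nu)$ cancels the $\Gamma(2\nu)$ in $B(\nu,\nu)$ and, after collecting the explicit powers of two and the remaining $\Gamma(\nu)^2=\Gamma\big(\tfrac{n+s+1}{2}\big)^2$, produces the constant $c_{n,s}$. I expect the main obstacle to be precisely this complex bookkeeping, namely showing that the two conjugate incomplete Beta contributions carry identical real prefactors and recombine, via the reflection symmetry of the Beta integrand, into the complete Beta function.
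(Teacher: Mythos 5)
Your reduction is essentially correct, and it is worth noting that the paper offers no proof of this statement at all: it is quoted from \cite[Proposition 4.2]{RT}. Your argument is therefore a genuinely self-contained alternative, and an elementary one (Gamma and Beta integrals only, avoiding the Macdonald-function/cosine-transform route one would otherwise invoke). I checked each step: the evenness in $\lambda$, the substitutions $u=t\lambda$ and $r=\coth u$ giving $|\lambda|^{\mu-1}\int_1^\infty(r^2-1)^{\mu/2-1}e^{-a|\lambda|r/4}\,dr$, Fubini plus the Gamma integral, and the rationalisation. Two points to make explicit in a final write-up. First, $r=(1+u)/(1-u)$ alone yields $2\cdot 4^{\nu-1}\int_0^1 u^{\nu-1}\big((1+b)+(1-b)u\big)^{-2\nu}\,du$ with $b=\pm 4iw/a$; a second M\"obius substitution $p=(1-b)u/\big((1+b)+(1-b)u\big)$ is needed to reach the Beta integrand, after which one gets, writing $J_\pm$ for your two $r$-integrals, $J_\pm=2\cdot 4^{\mu+\nu-1}(a^2+16w^2)^{-\nu}B_{x_\pm}(\nu,\nu)$ with $x_\pm=\frac{a\mp 4iw}{2a}$, confirming your claim that both prefactors are real and equal. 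Second, since $x_\pm$ are complex, $B_{x_\pm}(\nu,\nu)$ are contour integrals in the plane slit along $(-\infty,0]\cup[1,\infty)$; your paths stay in this domain, and the reflection identity $B_{x}(\nu,\nu)+B_{1-x}(\nu,\nu)=B(\nu,\nu)$ is justified there by analyticity and $\Re\nu>0$, so this is only a matter of saying it.

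The one genuine flaw is your final sentence: the bookkeeping does \emph{not} produce the displayed $c_{n,s}$, and it cannot, because nothing in your computation can generate a power of $\pi$. Carrying your outline to the end gives
\begin{equation*}
\Gamma(2\nu)\,(J_++J_-)=2\cdot 4^{3\nu-1}\,\Gamma(\nu)^2\,(a^2+16w^2)^{-\nu}
=2^{3n+3s+2}\,\Gamma\Big(\frac{n+s+1}{2}\Big)^2\big((1+|v|^2)^2+16w^2\big)^{-\frac{n+1+s}{2}},
\end{equation*}
which differs from the stated $c_{n,s}=2^{n-1+3s}\pi^{-n-1}\Gamma\big(\frac{n+s+1}{2}\big)^2$ by the factor $2\,(4\pi)^{n+1}$. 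Your value, not the displayed one, is the correct constant for the double integral as literally written. One can cross-check this against the paper's own normalisations: for $m=1$ the kernel $\Phi_{s,1}$ equals $\frac{4\pi^{s+1}}{\Gamma(s)}(4\pi)^{-n-s-1}(2\pi)^{-1}$ times this integral (the factor $\frac{4\pi^{s+1}}{\Gamma(s)}$ from \eqref{eq:homo} is inadvertently dropped in \eqref{eq:Phi}, as the verification $\|\Phi_{s,1}\|_1=1$ in the proof of Theorem \ref{thm:STf} shows it must be present), and then $C_1(n,1,s)=\frac{4}{\pi^{n+1/2}}\frac{\Gamma(n+s)\Gamma(\frac{n+1+s}{2})}{\Gamma(s)\Gamma(\frac{n+s}{2})}$ together with the duplication formula forces exactly the constant $2^{3n+3s+2}\Gamma\big(\frac{n+s+1}{2}\big)^2$. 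So the displayed $c_{n,s}$, inherited from the quotation of \cite{RT}, absorbs normalisation factors not present in the bare integral; this is harmless for the paper, which only ever uses the functional form $\big((1+|v|^2)^2+16w^2\big)^{-\frac{n+1+s}{2}}$ and fixes $C_1(n,m,s)$ independently via $\|\Phi_{s,1}\|_1=1$. But in your proof you must either carry out the arithmetic and state the constant you actually obtain, or explicitly flag the mismatch: asserting agreement with a constant containing $\pi^{-n-1}$ at the end of a manifestly $\pi$-free calculation is precisely the kind of unverified bookkeeping you identified as the main obstacle.
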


Since the Fourier transform of a radial function is radial and given by the Hankel transform we see that
$$  \Phi_{s,1}(v,z) = c_{m,s}  \int_0^\infty  \int_{0}^\infty   \frac{J_{m/2-1}(\lambda |z|)}{(\lambda |z|)^{m/2-1}} \Big(\frac{\lambda}{\sinh(t \lambda)}\Big)^{n+s+1} e^{-\frac{1}{4}\lambda \coth(t \lambda) (1+|v|^2)} \lambda^{m-1}d\lambda dt.$$
It may be possible to evaluate this integral directly but we use Radon transform to prove the following result giving us an explicit expression for $ \Phi_{s,1}.$

\begin{thm}
\label{thm:Phi1} For $ (v,z) \in N $ and $ 0< s < 1 $ we have
$$  \Phi_{s,1}(v,z) = c_{n,m,s} \big((1+|v|^2)^2+16|z|^2\big)^{-\frac{(n+m+s)}{2}}.$$
\end{thm}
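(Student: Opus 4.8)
The plan is to reduce the computation to the Heisenberg group, where Theorem~\ref{thm:com} already evaluates the relevant integral, by means of the partial Radon transform $R_\omega$ in the central variable. Reading off the inner integral in \eqref{eq:Phi}, the partial Fourier transform of $\Phi_{s,1}$ in $z$ is
$$
\Phi_{s,1}^{\lambda\omega}(v)=(4\pi)^{-n-s-1}\int_0^\infty\Big(\frac{\lambda}{\sinh(t\lambda)}\Big)^{n+s+1}e^{-\frac14\lambda\coth(t\lambda)(1+|v|^2)}\,dt,\qquad \lambda>0.
$$
On the other hand, the Fourier slice theorem for the Radon transform \eqref{eq:Radon} gives, after the substitution $z=t\omega+z'$ with $t=\omega\cdot z$,
$$
\int_{-\infty}^{\infty}e^{i\lambda t}R_\omega\Phi_{s,1}(v,t)\,dt=\int_{\R^m}e^{i\lambda(\omega\cdot z)}\Phi_{s,1}(v,z)\,dz=\Phi_{s,1}^{\lambda\omega}(v).
$$
Thus $R_\omega\Phi_{s,1}(v,\cdot)$ is the inverse Fourier transform in $\lambda$ of the displayed expression, and I may recover it directly from Theorem~\ref{thm:com}.

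Since $\Phi_{s,1}$ is radial in $z$, the function $R_\omega\Phi_{s,1}(v,t)$ is real and even in $t$; moreover the $t$-integrand above is even in $\lambda$, so the inversion integral extends the range of $\Phi_{s,1}^{\lambda\omega}$ to all of $\R$. Substituting this value and interchanging the $dt$ and $d\lambda$ integrations (legitimate by the rapid decay of the heat kernels) produces precisely the double integral evaluated in Theorem~\ref{thm:com} with $w=t$, whence
$$
R_\omega\Phi_{s,1}(v,t)=\frac{(4\pi)^{-n-s-1}}{2\pi}\,c_{n,s}\,\big((1+|v|^2)^2+16t^2\big)^{-\frac{n+1+s}{2}},
$$
which is, as it must be, independent of $\omega$.

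It remains to recognise the right-hand side as the Radon transform of the claimed kernel. For fixed $v$, writing $b=(1+|v|^2)^2+16t^2$ and integrating over $\omega^{\perp}\cong\R^{m-1}$,
$$
R_\omega\big[\big((1+|v|^2)^2+16|\cdot|^2\big)^{-\frac{n+m+s}{2}}\big](v,t)=\int_{\R^{m-1}}\big(b+16|z'|^2\big)^{-\frac{n+m+s}{2}}\,dz',
$$
and the standard formula $\int_{\R^{d}}(b+c|y|^2)^{-\alpha}\,dy=\pi^{d/2}c^{-d/2}\frac{\Gamma(\alpha-d/2)}{\Gamma(\alpha)}b^{d/2-\alpha}$, taken with $d=m-1$, $c=16$ and $\alpha=\frac{n+m+s}{2}$, turns this into a constant multiple of $b^{-\frac{n+1+s}{2}}$, matching the form of $R_\omega\Phi_{s,1}$ above. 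Because both $\Phi_{s,1}$ and the candidate kernel are radial in $z$, equality of their Radon transforms for every $\omega$ forces equality of their partial Fourier transforms on every ray, hence, by Fourier inversion, equality of the functions themselves; comparing the two multiplicative constants then pins down $c_{n,m,s}$.

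The genuinely delicate points are the justification of the Fourier slice identity and of the interchange of integration needed to bring the expression into the exact shape of Theorem~\ref{thm:com}; both rest on the Schwartz-class decay of $p_{t,s}$ and $q_t$ together with the integrability of $\Phi_{s,1}(v,\cdot)$, which holds since $n+s>0$. The injectivity of the partial Radon transform on functions radial in $z$ is then immediate from the slice theorem, so I expect the only real care to lie in bookkeeping the constants $(4\pi)^{-n-s-1}$, the factor $\frac{1}{2\pi}$ from inversion, $c_{n,s}$ from Theorem~\ref{thm:com}, and the beta-integral factor, in order to obtain the explicit value of $c_{n,m,s}$.
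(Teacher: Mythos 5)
Your proposal is correct and follows essentially the same route as the paper's own proof: you compute $R_{\omega}\Phi_{s,1}$ via the Fourier slice identity \eqref{eq:R} applied to the inner integral in \eqref{eq:Phi}, evaluate it by Theorem \ref{thm:com}, match it against the Radon transform of the candidate kernel, and conclude by injectivity of the Radon transform on functions radial in $z$. The only (harmless) difference is that you spell out the $(m-1)$-dimensional beta integral and the uniqueness step, which the paper compresses into ``which is rather easy to see'' and an appeal to the uniqueness theorem for the Radon transform.
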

\begin{proof}
It is known (and easy to check) that, for any $f\in L^1(\R^m)$,
$$
 \int_{-\infty}^\infty  e^{-i\lambda w} R_{\omega}f(w)\, dw = \int_{\R^m}  e^{-i \lambda  \omega\cdot x} f(x) \,dx.
$$
 In other words, 
\begin{equation}
\label{eq:R}
 R_{\omega}f(w)=(2\pi)^{-1}\int_{-\infty}^{\infty}\Big(\int_{\R^m}e^{-i  \lambda  \omega\cdot x}f(x)\,dx\Big)e^{i\lambda w}\,d\lambda.
 \end{equation}
By the uniqueness theorem for Radon transform, our theorem will follow once we show that the Radon transform of $ \Phi_{s,1} $ coincides with that of the function 
$((1+|v|^2)^2+16|z|^2)^{-\frac{(n+m+s)}{2}}$ up to a multiplicative constant. In view of \eqref{eq:Phi} and \eqref{eq:R} we know that 
$$
R_{\omega}\Phi_{s,1}(v,w)=C(2\pi)^{-1}\int_{-\infty}^{\infty}\int_{0}^{\infty} e^{-i \lambda w} \Big(\frac{\lambda}{\sinh(t \lambda)}\Big)^{n+s+1} e^{-\frac{1}{4}\lambda \coth(t \lambda) (1+|v|^2)}\, d\lambda \,dt
$$
which has been evaluated in Theorem \ref{thm:com}. Hence, we only need to verify that
$$  \int_{ x \cdot \omega = t} ((1+|v|^2)^2+16|x|^2)^{-\frac{(n+m+s)}{2}} d\mu(x) =c_{n,m,s}((1+|v|^2)^2+16 t^2)^{-\frac{(n+1+s)}{2}}$$
where $
c_{n,m,s}=C_1(n,1,s)C_1(n,m,s)^{-1}$,
which is rather easy to see. This completes the proof.
\end{proof}

In this article, we need the exact values of integrals of several kernels. These are collected in the following technical lemma.
\begin{lem}
\label{lem:I}
Let $n,m>0$ and let $j,\alpha$ be such that $\alpha-j>0$ and $\alpha>-n$. Then we have
\begin{equation}
\label{eq:I}
\int_N(1+|v|^2)^j\big((1+|v|^2)^2 +16 |z|^2\big)^{-\frac{n+m+\alpha}{2}}\,dv\,dz=\frac{\pi^{n+m/2}}{4^m}\frac{\Gamma(\alpha-j)\Gamma(\frac{n+\alpha}{2})}{\Gamma(n-j+\alpha)\Gamma\big(\frac{n+m+\alpha}{2}\big)}.
\end{equation}
\end{lem}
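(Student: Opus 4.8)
The plan is to iterate the integration, exploiting that the integrand depends on $v$ only through $|v|^2$ and on $z$ only through $|z|^2$; recall that $\dim\mathfrak{v}=2n$ and $\dim\mathfrak{z}=m$, so $dv\,dz$ is Lebesgue measure on $\R^{2n}\times\R^m$. By Fubini I would first carry out the inner integral over $z$ with $v$ held fixed (equivalently, with $a:=1+|v|^2>0$ fixed), and then perform the remaining radial integral in $v$. Both one-dimensional reductions are Euler beta integrals after passing to polar coordinates, so the whole computation amounts to identifying the Gamma factors and tracking the powers of $2$ introduced by the $16|z|^2$ term.

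For the inner integral, writing $\beta=\tfrac{n+m+\alpha}{2}$ and passing to polar coordinates in $\R^m$ (with $|S^{m-1}|=2\pi^{m/2}/\Gamma(m/2)$), the substitution $t=16r^2/a^2$ turns $\int_{\R^m}(a^2+16|z|^2)^{-\beta}\,dz$ into a constant multiple of $\int_0^\infty t^{m/2-1}(1+t)^{-\beta}\,dt=B(\tfrac{m}{2},\beta-\tfrac{m}{2})$. The convergence of this beta integral requires $\beta-\tfrac{m}{2}=\tfrac{n+\alpha}{2}>0$, which is exactly the hypothesis $\alpha>-n$. Collecting constants, I expect to obtain
\[
\int_{\R^m}\big((1+|v|^2)^2+16|z|^2\big)^{-\beta}\,dz=\frac{\pi^{m/2}}{4^m}\,\frac{\Gamma\!\big(\tfrac{n+\alpha}{2}\big)}{\Gamma\!\big(\tfrac{n+m+\alpha}{2}\big)}\,(1+|v|^2)^{-(n+\alpha)},
\]
where the power $(1+|v|^2)^{-(n+\alpha)}$ arises from $a^{m-2\beta}$.

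It then remains to multiply by $(1+|v|^2)^{j}$ and integrate the resulting factor $(1+|v|^2)^{j-n-\alpha}$ over $\R^{2n}$. Polar coordinates (with $|S^{2n-1}|=2\pi^{n}/\Gamma(n)$) together with $t=|v|^2$ reduce this to $\tfrac12|S^{2n-1}|\int_0^\infty t^{n-1}(1+t)^{-(n+\alpha-j)}\,dt=\pi^{n}\,\Gamma(\alpha-j)/\Gamma(n+\alpha-j)$, the beta integral now converging precisely when $\alpha-j>0$, the second hypothesis. Multiplying the two evaluations gives
\[
\frac{\pi^{n+m/2}}{4^m}\,\frac{\Gamma(\alpha-j)\,\Gamma\!\big(\tfrac{n+\alpha}{2}\big)}{\Gamma(n-j+\alpha)\,\Gamma\!\big(\tfrac{n+m+\alpha}{2}\big)},
\]
which is the claimed value. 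The only genuine obstacle is bookkeeping: keeping the factor $4^m$ (including the cancellation $8\cdot 4^{m-1}=2\cdot 4^m$) and the two Gamma quotients straight, since nothing deeper than the beta integral and the two convergence conditions $\alpha-j>0$, $\alpha>-n$ is involved.
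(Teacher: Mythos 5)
Your proposal is correct and is essentially the paper's own computation: both reduce the integral to two Euler beta integrals via polar coordinates, with the same convergence conditions $\alpha-j>0$ and $\alpha>-n$ and the same $4^m$ bookkeeping. The only (immaterial) difference is organizational: you integrate sequentially by Fubini--Tonelli, evaluating the $z$-integral first to get the closed form $\frac{\pi^{m/2}}{4^m}\frac{\Gamma(\frac{n+\alpha}{2})}{\Gamma(\frac{n+m+\alpha}{2})}(1+|v|^2)^{-(n+\alpha)}$, whereas the paper passes to polar coordinates in both variables at once and decouples the double integral by rescaling $s\mapsto s(1+r)/4$ before applying the beta formula twice.
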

\begin{proof}
Let us call $I$ the integral in the left hand side of \eqref{eq:I}.
Recall the formula for the Beta function
\begin{equation*}
\int_0^{\infty}(1+t)^{-b}t^{a-1}\,dt=\frac{\Gamma(a)\Gamma(b-a)}{\Gamma(b)}, \qquad \Re (a)>0, \quad \Re (b-a)>0,
\end{equation*}
and let $\omega_n$ denote the area of the unit sphere $\mathbb{S}^{n-1}$, i.e., $\omega_n=\frac{2\pi^{n/2}}{\Gamma(n/2)}$.
By using polar coordinates in both variables, we have 
\begin{align*}
I&=\omega_{2n}\int_{\R^m}\int_0^{\infty}(1+r^2)^j\big((1+r^2)^2+16|z|^2\big)^{-\frac{(n+m+\alpha)}{2}}r^{2n-1}\,dr\,dz\\
&=\frac{\omega_{2n}}{2}\int_0^{\infty}\int_0^{\infty}(1+r)^j\big((1+r)^2+16|z|^2\big)^{-\frac{(n+m+\alpha)}{2}}r^{n-1}\,dr\,dz\\
&=\frac{\omega_{2n}}{2}\int_{\R^m}\int_0^{\infty}(1+r)^{-(n+m-j+\alpha)}\Big(1+\frac{16|z|^2}{(1+r)^2}\Big)^{-\frac{(n+m+\alpha)}{2}}r^{n-1}\,dr\,dz\\
&=\frac{\omega_{2n}\omega_m}{2}\int_{0}^{\infty}\int_0^{\infty}(1+r)^{-(n+m-j+\alpha)}\Big(1+\frac{16s^2}{(1+r)^2}\Big)^{-\frac{(n+m+\alpha)}{2}}r^{n-1}s^{m-1}\,dr\,ds\\
&=\frac{\omega_{2n}\omega_m}{2 \cdot4^m}\int_{0}^{\infty}(1+r)^{-(n-j+\alpha)}r^{n-1}\,dr\int_0^{\infty}(1+s^2)^{-\frac{(n+m+\alpha)}{2}}s^{m-1}\,ds\\
&=\frac{\omega_{2n}\omega_m}{ 4^{m+1}}\int_{0}^{\infty}(1+r)^{-(n-j+\alpha)}r^{n-1}\,dr\int_0^{\infty}(1+s)^{-\frac{(n+m+\alpha)}{2}}s^{m/2-1}\,ds\\
&=\frac{\omega_{2n}\omega_m}{ 4^{m+1}}\frac{\Gamma(n)\Gamma(\alpha-j)}{\Gamma(n-j+\alpha)}\frac{\Gamma\big(\frac{m}{2}\big)\Gamma\big(\frac{n+\alpha}{2}\big)}{\Gamma\big(\frac{n+m+\alpha}{2}\big)}=\frac{\pi^{n+m/2}}{4^m}\frac{\Gamma(\alpha-j)\Gamma(\frac{n+\alpha}{2})}{\Gamma(n-j+\alpha)\Gamma\big(\frac{n+m+\alpha}{2}\big)}.
\end{align*}

\end{proof}

As an immediate corollary, we obtain the following explicit formula for  the kernel $\Phi_{s,\rho}(v,z)$. 
\begin{cor}
\label{cor:explicitP}
The kernel $\Phi_{s,\rho}(v,z)$ can be expressed as
$$
\Phi_{s,\rho}(v,z)=C_1(n,m,s)\rho^{2s} \varphi_{s,\rho}(v,z),
$$
where $\varphi_{s,\rho}(v,z) $ is as in \eqref{eq:varfisr} and $C_1(n,m,s)$ is the constant in \eqref{eq:C1}.
\end{cor}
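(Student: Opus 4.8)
The plan is to obtain the explicit form of $\Phi_{s,\rho}$ in two moves: first pin down its $\rho$-dependence by pure scaling, and then identify the overall constant by normalization. The starting point is Theorem~\ref{thm:Phi1}, which already gives the case $\rho=1$,
$$
\Phi_{s,1}(v,z) = c_{n,m,s}\,\big((1+|v|^2)^2+16|z|^2\big)^{-\frac{n+m+s}{2}} = c_{n,m,s}\,\varphi_{s,1}(v,z).
$$
The only thing left is to (a) propagate this to all $\rho>0$, and (b) show that the undetermined constant $c_{n,m,s}$ equals $C_1(n,m,s)$.

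For step (a), I would invoke the homogeneity relation $\Phi_{s,\rho}(v,z)=\rho^{-2(n+m)}\Phi_{s,1}(\rho^{-1}v,\rho^{-2}z)$ established in the proof of Theorem~\ref{thm:STf}. Substituting the $\rho=1$ formula and simplifying the argument via $1+|\rho^{-1}v|^2=\rho^{-2}(\rho^2+|v|^2)$ and $16|\rho^{-2}z|^2=16\rho^{-4}|z|^2$, the bracket scales as
$$
(1+|\rho^{-1}v|^2)^2+16|\rho^{-2}z|^2 = \rho^{-4}\big((\rho^2+|v|^2)^2+16|z|^2\big),
$$
so that $\varphi_{s,1}(\rho^{-1}v,\rho^{-2}z)=\rho^{2(n+m+s)}\varphi_{s,\rho}(v,z)$. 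Combining the powers of $\rho$ yields $\Phi_{s,\rho}(v,z)=c_{n,m,s}\,\rho^{2s}\,\varphi_{s,\rho}(v,z)$, which is the claimed shape with the constant still to be determined.

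For step (b), the clean route is the normalization $\|\Phi_{s,1}\|_1=1$, already proved in Theorem~\ref{thm:STf}. Integrating the $\rho=1$ identity and applying Lemma~\ref{lem:I} with $j=0$ and $\alpha=s$ gives
$$
1=\|\Phi_{s,1}\|_1=c_{n,m,s}\int_N\big((1+|v|^2)^2+16|z|^2\big)^{-\frac{n+m+s}{2}}\,dv\,dz=c_{n,m,s}\,\frac{\pi^{n+m/2}}{4^m}\frac{\Gamma(s)\,\Gamma\big(\frac{n+s}{2}\big)}{\Gamma(n+s)\,\Gamma\big(\frac{n+m+s}{2}\big)},
$$
and solving for $c_{n,m,s}$ reproduces precisely $C_1(n,m,s)$ as defined in \eqref{eq:C1}. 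The computation is essentially bookkeeping once Theorem~\ref{thm:Phi1} and Lemma~\ref{lem:I} are in hand; the only place requiring care is matching the powers of $\rho$ in the scaling step of part (a), and the Gamma-factor reconciliation in part (b), both of which are routine verifications rather than genuine obstacles.
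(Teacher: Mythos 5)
Your proposal is correct and follows essentially the same route as the paper: the shape of the kernel is deduced from Theorem~\ref{thm:Phi1} together with the homogeneity relation $\Phi_{s,\rho}(v,z)=\rho^{-2(n+m)}\Phi_{s,1}(\rho^{-1}v,\rho^{-2}z)$ from the proof of Theorem~\ref{thm:STf}, and the constant is fixed by the normalization $\|\Phi_{s,1}\|_1=1$ via Lemma~\ref{lem:I} with $j=0$, $\alpha=s$. Your write-up merely makes explicit the scaling computation that the paper compresses into ``immediately inferred,'' and all the exponent and Gamma-factor bookkeeping checks out.
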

\begin{proof}
Up to the constant $C_1(n,m,s)$, the expression can be immediately inferred from Theorems \ref{thm:STf} and \ref{thm:Phi1}. To compute $C_1(n,m,s)$ observe that, since $\|\Phi_{s,1}\|_1=1$, we have that $C_1(n,m,s)^{-1}=\int_N\varphi_{s,1}(v,z)\,dv\,dz$, which is given by
$$
\Big(\int_N\big((1+|v|^2)^2 +16 |z|^2\big)^{-\frac{n+m+s}{2}}\,dv\,dz\Big)^{-1}=\frac{4^m}{\pi^{n+m/2}}\frac{\Gamma(n+s)\Gamma\big(\frac{n+m+s}{2}\big)}{\Gamma(s)\Gamma(\frac{n+s}{2})},
$$
by Lemma \ref{lem:I} with $j=0$ and $\alpha=s$.
\end{proof}

Since  $\Phi_{s,\rho}(v,z)$ is an approximate identity, we obtain the $ L^p $ convergence \eqref{eq:convergfT} of Theorem~\ref{thm:sol}.
We are now interested in proving  the assertion about the limit in \eqref{eq:limits}, i.e., that 
$$ 
-\rho^{1-2s} \partial_\rho (\rho^{2s}f \ast \varphi_{s,\rho}) \rightarrow 2^{1-2s}\frac{\Gamma(1-s)}{\Gamma(s)} \mathcal{L}_s f\qquad \text{as } \rho\to 0.
$$ 
First we need to recall the following identity, that was essentially proved by Cowling and Haagerup in \cite[Section 3]{CH}, and it was also shown in \cite[Theorem 3.1]{RT} by a different method (we point out that there is a misprint in the statement of our result \cite[Theorem 3.1]{RT}: the restriction on $s$ should be $0<s<n+1$). We state it here in the most convenient way for our purposes.
\begin{thm}
\label{thm:CH}
Let $\rho>0$ and $0<s<n+1$. Then 
$$
\mathcal{L}_s\varphi_{-s,\rho}(v,z)=C_2(n,m,s)\rho^{2s}\varphi_{s,\rho}(v,z),
$$
where 
$$
C_2(n,m,s)=4^{2s}\frac{\Gamma\big(\frac{n+1+s}{2}\big)}{\Gamma\big(\frac{n+1-s}{2}\big)}\frac{\Gamma\big(\frac{n+m+s}{2}\big)}{\Gamma\big(\frac{n+m-s}{2}\big)}.
$$
\end{thm}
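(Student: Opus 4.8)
The plan is to reduce the identity to its counterpart on the Heisenberg group $\H^n$ (the case $m=1$), which is exactly \cite[Theorem 3.1]{RT}, and then to recover the general $H$-type case through the partial Radon transform $R_\omega$ together with its injectivity. Since both $\varphi_{-s,\rho}$ and $\varphi_{s,\rho}$ are radial in $v$ and in $z$, the representation-theoretic machinery of Subsection \ref{subsec:reprH} applies cleanly, and the whole argument runs on the Fourier side through a single Hermite multiplier.

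First I would establish the intertwining relation
$$
R_\omega(\mathcal{L}_s g)=\mathcal{L}_s^{\H}(R_\omega g),\qquad \omega\in\mathbb{S}^{m-1},
$$
where $\mathcal{L}_s^{\H}$ is the conformally invariant fractional power of the sublaplacian on $\H^n$. This follows from $\pi_{\lambda,\omega}(g)=\pi_{\lambda}(R_\omega g)$: both $\mathcal{L}_s$ on $N$ and $\mathcal{L}_s^{\H}$ on $\H^n$ act through the same spectral multiplier $\frac{\Gamma((2k+n+1+s)/2)}{\Gamma((2k+n+1-s)/2)}(2\lambda)^s$ of $H(\lambda)$, so $\pi_{\lambda}(R_\omega\mathcal{L}_s g)=\pi_{\lambda,\omega}(\mathcal{L}_s g)=\pi_{\lambda}(\mathcal{L}_s^{\H}R_\omega g)$ for every $\lambda>0$, and injectivity of the Heisenberg group Fourier transform gives the claim.

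Next I would compute the Radon transforms of the two kernels. Exactly as in the proof of Theorem \ref{thm:Phi1} (setting $A=(\rho^2+|v|^2)^2+16t^2$, integrating over the hyperplane $\omega^{\perp}\cong\R^{m-1}$ and substituting $z'=\tfrac{\sqrt A}{4}w$), one obtains
$$
R_\omega\varphi_{\pm s,\rho}(v,t)=c_{n,m,\pm s}\,\varphi_{\pm s,\rho}^{\H}(v,t),\qquad c_{n,m,\pm s}=4^{-(m-1)}\frac{\pi^{(m-1)/2}\,\Gamma\big(\tfrac{n+1\pm s}{2}\big)}{\Gamma\big(\tfrac{n+m\pm s}{2}\big)},
$$
where $\varphi_{\pm s,\rho}^{\H}(v,t)=\big((\rho^2+|v|^2)^2+16t^2\big)^{-\frac{n+1\pm s}{2}}$ is the corresponding kernel on $\H^n$; thus each $H$-type kernel is carried, up to an explicit constant, to its Heisenberg analogue (the center dimension dropping from $m$ to $1$). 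Applying $R_\omega$ to the left-hand side of the asserted identity, using the intertwining relation, then the Heisenberg case $\mathcal{L}_s^{\H}\varphi_{-s,\rho}^{\H}=C_2(n,1,s)\rho^{2s}\varphi_{s,\rho}^{\H}$ from \cite[Theorem 3.1]{RT}, and finally rewriting $\varphi_{s,\rho}^{\H}=c_{n,m,s}^{-1}R_\omega\varphi_{s,\rho}$, I arrive at
$$
R_\omega(\mathcal{L}_s\varphi_{-s,\rho})=\frac{c_{n,m,-s}}{c_{n,m,s}}\,C_2(n,1,s)\,\rho^{2s}\,R_\omega\varphi_{s,\rho}.
$$

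The remaining step is pure bookkeeping, and it is also where I would be most careful: I must check $\frac{c_{n,m,-s}}{c_{n,m,s}}C_2(n,1,s)=C_2(n,m,s)$. In the ratio $c_{n,m,-s}/c_{n,m,s}$ the $4^{-(m-1)}$ and $\pi^{(m-1)/2}$ prefactors cancel, leaving $\frac{\Gamma((n+1-s)/2)\,\Gamma((n+m+s)/2)}{\Gamma((n+m-s)/2)\,\Gamma((n+1+s)/2)}$; multiplying by $C_2(n,1,s)=4^{2s}\big(\Gamma(\tfrac{n+1+s}{2})/\Gamma(\tfrac{n+1-s}{2})\big)^2$ the factors $\Gamma(\tfrac{n+1\pm s}{2})$ collapse and one is left with precisely $C_2(n,m,s)$. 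The only genuine subtleties, beyond this arithmetic, are the two structural facts used above: the intertwining relation, which is legitimate because on radial inputs $\mathcal{L}_s$ and $\mathcal{L}_s^{\H}$ share the identical Hermite multiplier; and the cancellation of $R_\omega$, which is justified since $R_\omega\varphi_{-s,\rho}$ converges exactly when $n+1-s>0$ — so that the hypothesis $0<s<n+1$ is the natural one — while $\varphi_{s,\rho}\in L^1(N)$ for all $s>0$, making both sides integrable radial functions determined by their common Radon transform.
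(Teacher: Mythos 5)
Your proposal is correct, but it takes a genuinely different route from the paper's. The paper's own proof of Theorem \ref{thm:CH} is a one-line reduction to the Heisenberg computation in \cite[Theorem 3.1]{RT}: it instructs the reader to repeat that proof in the $H$-type setting (with $\rho$ replaced by $\rho^2/4$), i.e.\ to redo for general center dimension $m$ the spectral calculation in which $\pi_{\lambda,\omega}(\varphi_{\pm s,\rho})$ is expanded in the projections $P_k(\lambda)$ with coefficients expressed through the $L$-function \eqref{eq:Lfunction}, after which the multiplier $\frac{\Gamma\big(\frac{2k+n+1+s}{2}\big)}{\Gamma\big(\frac{2k+n+1-s}{2}\big)}(2\lambda)^s$ is applied coefficientwise and the Gamma factors are resummed. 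You instead take the $m=1$ case as a black box and transfer it to general $m$ via the partial Radon transform, using $\pi_{\lambda,\omega}(g)=\pi_{\lambda}(R_{\omega}g)$ to intertwine $\mathcal{L}_s$ with its Heisenberg counterpart and computing $R_{\omega}\varphi_{\pm s,\rho}$ explicitly. This is precisely the mechanism the paper itself deploys elsewhere (the proofs of Theorems \ref{thm:Phi1} and \ref{thm:MOZgen}), and your bookkeeping is right: your $c_{n,m,s}=4^{-(m-1)}\pi^{(m-1)/2}\Gamma\big(\tfrac{n+1+s}{2}\big)/\Gamma\big(\tfrac{n+m+s}{2}\big)$ agrees with the ratio $C_1(n,1,s)/C_1(n,m,s)$ recorded in the proof of Theorem \ref{thm:MOZgen}, and $\frac{c_{n,m,-s}}{c_{n,m,s}}C_2(n,1,s)=C_2(n,m,s)$ checks out. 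What each approach buys: the paper's route stays at the spectral level and produces the explicit coefficient formulas \eqref{eq:coeff} that are reused later (e.g.\ in the higher-order Theorem \ref{thm:higher}), while yours avoids redoing the $L$-function computation for general $m$ and makes the $m$-dependence of $C_2(n,m,s)$ transparent. One point deserving more care than your closing remark gives it: for $(n+m)/2\le s<n+1$ the kernel $\varphi_{-s,\rho}$ is not in $L^2(N)$, so $\mathcal{L}_s\varphi_{-s,\rho}$, and hence your Fourier-side justification of the intertwining relation, needs an interpretation; the paper and \cite{RT} obtain this from holomorphy in $s$ of the explicit spectral coefficients, and your argument is completed the same way — prove the identity for small $s$, where all operations are classical, and continue analytically in the strip $0<\Re s<n+1$, consistent with your observation that $s<n+1$ is exactly the convergence threshold for $R_{\omega}\varphi_{-s,\rho}$.
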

\begin{proof}
Just repeat the proof of \cite[Theorem 3.1]{RT} in the case $H$ type groups, and change $\rho$ into $\rho^2/4$ (there, $\rho$ corresponds to $\delta$).
\end{proof}

\begin{rem}
\label{rem:solution}
In view of the results above, we can write the solution to \eqref{eq:ep} as 
$$
u(v,z,\rho)=C_1(n,m,s)C_2(n,m,s)^{-1}\mathcal{L}_sf\ast \varphi_{-s,\rho}(v,z), 
$$
and let us recall that $\varphi_{s,\rho}(v,z)$ is given in \eqref{eq:varfisr}.
\end{rem}
Therefore, 
$$
-\rho^{1-2s}\partial_{\rho}u=C_1(n,m,s)C_2(n,m,s)^{-1}\mathcal{L}_sf\ast K_{s,\rho},
$$
where $K_{s,\rho}(v,z)=-\rho^{1-2s}\partial_{\rho}\varphi_{-s,\rho}(v,z)$.
Observe that 
$$
-\partial_{\rho}\varphi_{-s,\rho}(v,z)=2\rho(n+m-s)(\rho^2+|v|^2)\big((\rho^2+|v|^2)^2+16|z|^2\big)^{-\frac{n+m-s}{2}-1}.
$$
We can rewrite this as
$$
K_{s,\rho}(v,z)=2(n+m-s)\rho^{2(1-s)}\frac{(\rho^2+|v|^2)}{\big((\rho^2+|v|^2)^2+16|z|^2\big)^{1/2}}\varphi_{1-s,\rho}(v,z).
$$ 
This shows that $K_{s,\rho}(v,z)=\rho^{-2(n+m)}K_{s,1}(\rho^{-1}v,\rho^{-2}z)$ and hence $K_{s,\rho}(v,z),$ after normalisation, is an approximate identity. Thus,
$$
-\lim_{\rho\to0}\rho^{1-2s}\partial_{\rho}u=C_1(n,m,s)C_2(n,m,s)^{-1}C_3(n,m,s)\mathcal{L}_sf,
$$
with $C_3(n,m,s)=\int_NK_{s,1}(v,z)\,dv\,dz$. Then, by Lemma \ref{lem:I} with $j=1$ and $\alpha=2-s$, we have
\begin{equation}
\label{eq:C3}
C_3(n,m,s)=2(n+m-s)\frac{\pi^{n+m/2}}{4^m}\frac{\Gamma(1-s)\Gamma(\frac{n+2-s}{2})}{\Gamma(n-s+1)\Gamma\big(\frac{n+m+2-s}{2}\big)}.
\end{equation}
Finally, collecting \eqref{eq:C1}, \eqref{eq:C2} and \eqref{eq:C3}, we get
\begin{equation}
\label{eq:limro}
-\lim_{\rho\to0}\rho^{1-2s}\partial_{\rho}u=2^{1-2s}\frac{\Gamma(1-s)}{\Gamma(s)}\mathcal{L}_sf.
\end{equation}

From \eqref{eq:convo} in the proof of Theorem \ref{thm:STf}, we see that the solution of the extension problem can be written as $u=f\ast \Phi_{s,\rho}$, and $\Phi_{s,\rho}$ has an explicit expression, see Corollary \ref{cor:explicitP} (and also $\|\Phi_{s,\rho}\|_1=1$). With this we have
\begin{align}
\label{eq:iden2}
 u(v,z,\rho)-f(v,z)&=f\ast  \Phi_{s,\rho}(v,z)-f(v,z)=\int_N\big(f(x)-f(y)\big)\Phi_{s,\rho}(y^{-1}x)\,dy\\
\notag&=C_1(n,m,s)\rho^{2s}\int_N\big(f(x)-f(y)\big)\varphi_{s,\rho}(y^{-1}x)\,dy.
\end{align}
From here, by taking derivative in $\rho$ and multiplying by $\rho^{1-2s}$, we obtain
\begin{align*}
\rho^{1-2s}\partial_{\rho}u(x,\rho)&=\rho^{1-2s}\partial_{\rho}[u(x,\rho)-f(x)]=-C_1(n,m,s)2s\int_N\big(f(x)-f(y)\big)\varphi_{s,\rho}(y^{-1}x)\,dy\\
&\qquad -C_1(n,m,s)\rho\int_N\big(f(x)-f(y)\big)\partial_{\rho}\varphi_{s,\rho}(y^{-1}x)\,dy.
\end{align*}
Now we let $\rho$ tend to $0^+$ to get
\begin{multline*}
-\lim_{\rho\to0^+}\rho^{1-2s}\partial_{\rho}u(v,z,\rho)=C_1(n,m,s) 2s\int_N\frac{\big(f(x)-f(y)\big)}{|xy^{-1}|^{Q+2s}}\,dy
\\
-\lim_{\rho\to0^+}C_1(n,m,s)\rho\int_N\big(f(x)-f(y)\big)\partial_{\rho}\varphi_{s,\rho}(y^{-1}x)\,dy.
\end{multline*}
Assume for a while that we can prove 
\begin{equation}
\label{eq:zero}
\lim_{\rho\to0^+}\rho\int_N\big(f(x)-f(y)\big)\partial_{\rho}\varphi_{s,\rho}(y^{-1}x)\,dy=0.
\end{equation}
Then, in view of \eqref{eq:limro}, we conclude that 
$$
\mathcal{L}_sf(v,z)=\frac{4^m 2^{2s}}{\pi^{n+m/2}}\frac{\Gamma(n+s)\Gamma\big(\frac{n+m+s}{2}\big)}{\Gamma\big(\frac{n+s}{2}\big)|\Gamma(-s)|}\int_N\frac{(f(x)-f(y))}{|xy^{-1}|^{Q+2s}}\,dy,
$$
i.e., the identity \eqref{eq:ir} in Theorem \ref{thm:sol}.

It remains to prove \eqref{eq:zero}. For that, we need the following Mean Value Theorem adapted to $H$-type groups, see \cite[(1.33)]{FS}.
\begin{thm}[Folland-Stein]
\label{thm:FS}
There exist constants $C>0$ and $\beta>0$ such that for all $f\in C^1(N)$  we have
$$
|f(yx)-f(x)|\le C(|y|+|y|^2)\sum_{j=1}^{2n+m}\sup_{|y'|\le \beta|y|}|Y_jf(y'x)|,
$$
where $Y_j=X_j$, $j=1,\ldots,2n$ and $Y_j=Z_j$, $j=2n+1\ldots,2n+m$. 
\end{thm}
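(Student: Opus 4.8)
The plan is to realise the increment $f(yx)-f(x)$ as the integral of first order derivatives of $f$ along a one parameter curve joining $x$ to $yx$, and then to read off the two terms $|y|$ and $|y|^2$ directly from the stratified homogeneous structure of $N$. Since $N$ is a step two group the exponential map is a polynomial diffeomorphism, so I would write $y=\exp W$ with $W=\sum_{j=1}^{2n}a_jX_j+\sum_{k=1}^{m}b_kZ_k\in\mathfrak{n}$, and recall that in these exponential coordinates the homogeneous norm assigns degree one to the $\mathfrak{v}$ part and degree two to the $\mathfrak{z}$ part, so that $|y|=(|a|^4+16|b|^2)^{1/4}$ with $a=(a_j)$, $b=(b_k)$. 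In particular $|a_j|\le|y|$ and $|b_k|\le\tfrac14|y|^2$, which is exactly where the asymmetric weights $|y|$ and $|y|^2$ on the right hand side must come from.

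First I would join the identity to $y$ by the one parameter subgroup $s\mapsto\exp(sW)$, $s\in[0,1]$, and consider the curve $\gamma(s)=x\exp(sW)$, which runs from $x$ to $xy$. Because $\gamma$ is the integral curve through $x$ of the left invariant vector field extending $W$, the central drift produced by the group law cancels exactly and the chain rule gives the clean identity
\[
\frac{d}{ds}f(\gamma(s))=\sum_{j=1}^{2n}a_j(X_jf)(\gamma(s))+\sum_{k=1}^{m}b_k(Z_kf)(\gamma(s)).
\]
Integrating over $s\in[0,1]$ and inserting $|a_j|\le|y|$, $|b_k|\le\tfrac14|y|^2$ yields
\[
|f(xy)-f(x)|\le C(|y|+|y|^2)\sum_{j=1}^{2n+m}\sup_{0\le s\le1}|Y_jf(\gamma(s))|.
\]
Finally, since $|\gamma(s)x^{-1}|=|\exp(sW)|=(s^4|a|^4+16s^2|b|^2)^{1/4}\le|y|$ for all $s\in[0,1]$, each supremum is controlled by $\sup_{|y'|\le\beta|y|}|Y_jf(y'x)|$ with $\beta=1$, giving the inequality in the right increment form $|f(xy)-f(x)|$.

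The one genuinely delicate point, which I expect to be the main obstacle, is the bookkeeping of left versus right invariance. If one instead joins $x$ to $yx$ by the left translated subgroup $s\mapsto\exp(sW)x$, then the coefficient multiplying the central derivative $Z_kf$ acquires an $x$ dependent drift of size $O(|y|\,|x|)$ coming from the bracket $[\log x,W]\in\mathfrak{z}$, and this would destroy the clean, $x$ free estimate; this is precisely why the curve and the invariant frame must be matched as above. To recover the stated form $|f(yx)-f(x)|$, with the points $y'x$, I would apply the right increment estimate to $\check f(g):=f(g^{-1})$ at the base point $x^{-1}$, using that inversion preserves the homogeneous norm ($|y^{-1}|=|y|$) and intertwines the left and right invariant frames up to a sign; on the step two group $N$ the resulting right invariant derivatives differ from $X_jf,Z_kf$ only by central terms, so the relevant $L^\infty$ bounds are unaffected. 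With Theorem \ref{thm:FS} available, \eqref{eq:zero} then follows quickly: bounding $|f(x)-f(y)|\le C(|g|+|g|^2)\sum_j\|Y_jf\|_{\infty}$ with $g=y^{-1}x$, rescaling $g=(\rho\tilde v,\rho^2\tilde z)$ and applying Lemma \ref{lem:I} shows that the surviving integrals in $\tilde g$ converge while the prefactor equals a positive power $\rho^{1-2s}$, which vanishes as $\rho\to0^+$ for $0<s<1/2$.
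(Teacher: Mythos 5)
Your central computation is correct and, on a step two group, genuinely simpler than the stratified argument in \cite{FS}: since $\mathfrak{z}$ is central, the whole increment can be exponentiated at once, and along $\gamma(s)=x\exp(sW)$ the chain rule gives exactly $\frac{d}{ds}f(\gamma(s))=\sum_j a_j(X_jf)(\gamma(s))+\sum_k b_k(Z_kf)(\gamma(s))$; with $|a_j|\le|y|$, $|b_k|\le\frac14|y|^2$ and $|x^{-1}\gamma(s)|=|\exp(sW)|\le|y|$ (note the slip: it is $x^{-1}\gamma(s)$, not $\gamma(s)x^{-1}$, that equals $\exp(sW)$) you obtain, with $\beta=1$,
$$
|f(xy)-f(x)|\le C\big(|y|+|y|^2\big)\sum_{j=1}^{2n+m}\sup_{|y'|\le|y|}|Y_jf(xy')|.
$$
The paper itself gives no proof --- Theorem \ref{thm:FS} is quoted from \cite[(1.33)]{FS} --- and this right-increment version is the one actually used there: in the proof of \eqref{eq:zero} the quantity bounded is $f(x)-f(xy^{-1})$, a right increment (with $|y^{-1}|=|y|$), and the suprema are immediately discarded in favour of the global bounds on $X_jf$, $Z_kf$. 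So your first three steps fully serve the paper's purposes.

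The genuine gap is the final inversion step. Applying your inequality to $\check f(g)=f(g^{-1})$ yields $|f(yx)-f(x)|\le C(|y|+|y|^2)\sum_j\sup_{|y'|\le|y|}|(Y_j^Rf)(y'x)|$, where $Y_j^R$ are the \emph{right} invariant fields agreeing with $Y_j$ at the identity, and your claim that these differ from $X_jf,Z_kf$ ``only by central terms, so the relevant $L^\infty$ bounds are unaffected'' is false: one has $X_j^Rf(w)=X_jf(w)+\sum_k c_{jk}(w)\,Z_kf(w)$ with $c_{jk}(w)$ linear in the $\mathfrak{v}$-coordinate of $w$, hence of size $|x|$ at the points $w=y'x$ --- precisely the $O(|y|\,|x|)$ drift you yourself identified as fatal in the direct approach, resurfacing after inversion. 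Moreover, no repair is possible, because the statement as printed (left increment, left invariant fields) is false. On $\H^1$ with the paper's conventions take $f(v,t)=t-\frac{x_1y_1}{2}$, $v=x_1+iy_1$; then $X_1f\equiv0$, $X_2f=-x_1$, $Zf=1$. With base point $x=(iM,0)$ and $y=(a,0)$, so $|y|=|a|$, one computes $f(yx)-f(x)=-aM$, while the right-hand side is at most $C(a+a^2)(1+\beta a)$ uniformly in $M$, since $|X_2f|\le\beta a$ on the set $\{y'x:|y'|\le\beta a\}$; letting $M\to\infty$ kills the inequality. The printed theorem thus mixes the two sides of (1.33) in \cite{FS}: the correct statement is your right-increment inequality (equivalently its mirror with right invariant fields at the points $y'x$), and your proof of it is complete once the inversion step is deleted and the $x^{-1}\gamma(s)$ slip is corrected; your closing derivation of \eqref{eq:zero} from it matches the paper's argument.
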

Note that by a change of variables we have
$$
\rho\int_N\big(f(x)-f(y)\big)\partial_{\rho}\varphi_{s,\rho}(y^{-1}x)\,dy=\rho\int_N\big(f(x)-f(xy^{-1})\big)\partial_{\rho}\varphi_{s,\rho}(y)\,dy.
$$
A simple calculation shows  that
$$
\partial_{\rho}\varphi_{s,\rho}(y)=\rho^{-1-2s}\rho^{-2(n+m)}\Psi(\delta_{\rho}(y)),
$$
where 
$$
\Psi(v,z)=\frac{(1+|v|^2)}{((1+|v|^2)^2+|z|^2)^{1/2}}((1+|v|^2)^2+|z|^2)^{-\frac{n+m+1+s}{2}},
$$ 
and $\Psi(\delta_{\rho}(y))=\Psi(\delta_{\rho}(v,z))=\Psi(\rho^{-1}v,\rho^{-2}z)$. Then, by Theorem \ref{thm:FS} we have
$$
\rho\int_N\big(f(x)-f(xy^{-1})\big)|\partial_{\rho}\varphi_{s,\rho}(y)|\,dy
\le C \rho^{-2s}\rho^{-2(n+m)}\int_N\big (|y|+|y|^2)\Psi(\delta_{\rho}(y))\,dy.
$$
By a change of variable, the above is bounded by $C \rho^{1-2s}\int_N\big (|y|+\rho|y|^2)\Psi(y)\,dy$, which tends to $0$ as $\rho\to 0^+$, for $0<s<1/2$, since $ (|y|+|y|^2)\Psi(y)$ is integrable. 

Finally, we can now compute the limit in \eqref{eq:limit2}. From \eqref{eq:iden2}, by taking limits, we obtain
\begin{align*}
\lim_{\rho\to 0^+} \rho^{-2s}(u(v,z,\rho)-f(v,z))&=C_1(m,n,s)\int_{N}\frac{(f(x)-f(y))}{|xy^{-1}|^{Q+2s}}\,dy\\
&=\frac{|\Gamma(-s)|}{4^s\Gamma(s)} \mathcal{L}_sf(v,z),
\end{align*}
where the second equality follows from the identity \eqref{eq:ir} just proven.
With this, the proof of Theorem \ref{thm:sol} is complete.

\subsection{Isometry property of the solution operator for the extension problem}
\label{subsec:iso}
In this subsection we will prove Theorem \ref{thm:MOZgen}. In order to prove this theorem, first we prove the isometry property of the solution operator $P_s$ (see the Introduction) for the extension problem in the Heisenberg group context given by M\"ollers et al in \cite{MOZ}. We will modify slightly the notation here and use $(z,t)\in \C^n\times \R$ for elements of the Heisenberg group $\H^n$.

The solution of the extension problem associated to the sublaplacian $\mathcal{L}$ on $\H^n$ is given by 
$$
u(z,t,\rho)=C_1(n,s)\rho^{2s}f\ast \varphi_{s,\rho}(z,t)
$$
where $C_1(n,s):=C_1(n,1,s)$ and
$$
\varphi_{s,\rho}(z,t)= \big((\rho^2+|z|^2)^2 +16 t^2\big)^{-\frac{n+1+s}{2}}.
$$
We write the solution operator $f\to u$ as $P_sf$. Thus $P_sf=C_1(n,s)\rho^{2s}f\ast \varphi_{s,\rho}(z,t)$. Note that the solution can be considered as a function on the $(2n+3)$-dimensional Heisenberg group $\H^{n+1}=\C^{n+1}\times \R$ which is radial in the extra variable $\zeta$. On $\H^{n+1}$ we use the coordinates $(z,\zeta,t)$. Thus
$$
P_sf(z,\zeta,t)=C_1(n,s)|\zeta|^{2s}f\ast \varphi_{s,|\zeta|}(z,t).
$$
Taking this point of view, M\"ollers et al have considered $f\to P_sf$ as an extension map taking functions $f$ on $\H^n$ into functions on $\H^{n+1}$. They have proved that $P_s$ defines an isometry between certain Sobolev spaces defined on $\H^n$ and $\H^{n+1}$.

To motivate the proof let us consider the case of the Laplacian on $\R^n$. In view of the Stinga-Torrea formula  (3.3) the solution of the extension problem associated  to $\Delta$  
\begin{equation}
\label{eq:epEu}
\big(\Delta+\partial_{\rho}^2+\frac{1-s}{\rho}\partial_{\rho}\big)u=0, \qquad u(x,0)=f(x)
\end{equation}
is given by ( see  \cite{ST} and also \cite{BRT})
$$
u(x,\rho)=c_{n,s} \rho^s f\ast u_{s,\rho}(x)
$$
where now $u_{s,\rho}(x)=(\rho^2+|x|^2)^{-\frac{n+s}{2}}$ and $c_n$ is given by 
$$
c_n^{-1}=\int_{\R^n}(1+|x|^2)^{-\frac{n+s}{2}}\,dx.
$$
Let $P_s$ be defined by $P_sf(x,y)=u(x,|y|)$, $(x,y)\in \R^n\times \R$. Let $H^s(\R^n)$ stand for the homogeneous Sobolev space defined to be the set of all distributions $f$ such that $(-\Delta)^{s/2}f\in L^2(\R^n)$. Equivalently, $f\in H^s(\R^n)$ if and only if $\widehat{f}\in L^2(\R^n, |\xi|^{2s}\,d\xi)$. In \cite{MOZ} the following theorem has been proved, and here we provide a different proof.
\begin{thm}
For $0<s<n$, $P_s:H^{s/2}(\R^n)\to H^{(s+1)/2}(\R^{n+1})$ is a constant multiple of an isometry.
\end{thm}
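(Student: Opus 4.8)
The plan is to transfer everything to the Fourier side, where both homogeneous Sobolev norms become weighted $L^2$ norms and the solution operator $P_s$ becomes a Fourier multiplier acting fibrewise in the frequency variable $\xi$ dual to $x$. Recall that $\|f\|_{H^{s/2}(\R^n)}^2 = c\int_{\R^n}|\widehat f(\xi)|^2|\xi|^s\,d\xi$ and, writing $\zeta=(\xi,\eta)\in\R^n\times\R$, that $\|F\|_{H^{(s+1)/2}(\R^{n+1})}^2 = c\int_{\R^{n+1}}|\widehat F(\xi,\eta)|^2(|\xi|^2+\eta^2)^{(s+1)/2}\,d\xi\,d\eta$. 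So it suffices to compute $\widehat{P_sf}$ and to show that its weighted $L^2(\R^{n+1})$ norm is a fixed constant multiple of $\int_{\R^n}|\widehat f(\xi)|^2|\xi|^s\,d\xi$.

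First I would take the partial Fourier transform in $x$. Since $u(x,\rho)=c_{n,s}\rho^s f\ast u_{s,\rho}(x)$ and the Fourier transform on $\R^n$ of $(1+|x|^2)^{-(n+s)/2}$ is a constant multiple of the Bessel profile $|\xi|^{s/2}K_{s/2}(|\xi|)$, a scaling argument gives
\[
\widehat u(\xi,\rho)=\widehat f(\xi)\,\psi(\rho|\xi|),\qquad \psi(r)=c_s\,r^{s/2}K_{s/2}(r),
\]
normalised by $\psi(0)=1$. Because $P_sf(x,y)=u(x,|y|)$, the partial transform in $x$ of $P_sf$ is $\widehat f(\xi)\,\psi(|\xi|\,|y|)$, and taking the one–dimensional Fourier transform in $y$ and rescaling $w=|\xi|y$ produces the fibrewise multiplier structure
\[
\widehat{P_sf}(\xi,\eta)=\frac{1}{|\xi|}\,\widehat f(\xi)\,\widehat\Psi\!\Big(\frac{\eta}{|\xi|}\Big),\qquad \widehat\Psi(\tau)=\int_{\R}\psi(|w|)\,e^{-i\tau w}\,dw.
\]

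The crucial step is to identify $\widehat\Psi$. Viewing $\psi(|w|)=c_s|w|^{s/2}K_{s/2}(|w|)$ as a one–dimensional Bessel potential kernel — the case $d=1$, $\alpha=s+1$ of the identity stating that $c_{d,\alpha}|x|^{(\alpha-d)/2}K_{(d-\alpha)/2}(|x|)$ has Fourier transform $(1+|\xi|^2)^{-\alpha/2}$, together with $K_{-s/2}=K_{s/2}$ — one obtains $\widehat\Psi(\tau)=c'(1+\tau^2)^{-(s+1)/2}$. Inserting this into the $H^{(s+1)/2}(\R^{n+1})$ norm and performing the change of variables $\eta=|\xi|\tau$ in the inner integral separates the $\xi$ and $\tau$ dependence: the inner integral equals $|\xi|^{s+2}A$ with
\[
A=(c')^2\int_{\R}(1+\tau^2)^{-(s+1)/2}\,d\tau,
\]
which is finite precisely because $s>0$ (a Beta integral). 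Hence $\|P_sf\|_{H^{(s+1)/2}(\R^{n+1})}^2 = cA\int_{\R^n}|\widehat f(\xi)|^2|\xi|^s\,d\xi = cA\,\|f\|_{H^{s/2}(\R^n)}^2$, which is exactly the asserted isometry up to the fixed constant $cA$.

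The main obstacle I anticipate is the exact evaluation of $\widehat\Psi$ — that is, recognising the radial Bessel profile $\psi(|w|)$ as a Bessel potential kernel with the correct indices so that its one–dimensional Fourier transform is the clean power $(1+\tau^2)^{-(s+1)/2}$; once this is in hand, the remainder is bookkeeping with scalings and Gamma factors. A secondary technical point is justifying the factorisation $\widehat{P_sf}(\xi,\eta)=|\xi|^{-1}\widehat f(\xi)\widehat\Psi(\eta/|\xi|)$ and the interchange of integrations, which I would handle by first working with $f\in\mathcal{S}(\R^n)$ (or with $\widehat f$ compactly supported away from the origin) and then extending by density in $H^{s/2}(\R^n)$. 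Finally, the restriction $0<s<n$ is exactly what keeps both homogeneous spaces below the critical smoothness, since $s/2<n/2$ and $(s+1)/2<(n+1)/2$ are both equivalent to $s<n$, so that $H^{s/2}(\R^n)$ and $H^{(s+1)/2}(\R^{n+1})$ are genuine spaces of tempered distributions on which $P_s$ acts.
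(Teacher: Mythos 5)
Your proposal is correct, and it reaches the paper's key multiplier identity by a genuinely different route. The paper first invokes the duality identity $\rho^s f\ast u_{s,\rho}=c_{n,s}(-\Delta)^{s/2}f\ast u_{-s,\rho}$ (quoted from \cite[Lemma 2.2]{BRT}), so that $P_sf(x,y)$ becomes the convolution of $(-\Delta)^{s/2}f$ with the kernel $(y^2+|x|^2)^{-\frac{n-s}{2}}$, which is homogeneous of degree $-(n+1)+(s+1)$ on $\R^{n+1}$; the distributional Fourier transform of this homogeneous function is then a constant multiple of $(\eta^2+|\xi|^2)^{-\frac{s+1}{2}}$, yielding in one stroke $(P_sf)^{\widehat{}}(\xi,\eta)=c\,(\eta^2+|\xi|^2)^{-\frac{s+1}{2}}|\xi|^s\widehat f(\xi)$. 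You instead compute fibrewise: the partial transform in $x$ produces the Bessel profile $\psi(\rho|\xi|)$ with $\psi(r)=c\,r^{s/2}K_{s/2}(r)$, and the one-dimensional transform in $y$ is evaluated by recognising $\psi(|w|)$ as the one-dimensional Bessel potential kernel of order $s+1$ (via $K_{-s/2}=K_{s/2}$), which gives exactly the same multiplier; from that point on, the two computations of the $H^{(s+1)/2}(\R^{n+1})$ norm --- integrating out $\eta$ against $(1+\tau^2)^{-\frac{s+1}{2}}$, finite precisely for $s>0$ --- are identical. The trade-offs: the paper's argument is shorter once the BRT identity and the Fourier transform of homogeneous distributions are granted, and there the restriction $0<s<n$ surfaces as local integrability of the two homogeneous kernels on $\R^{n+1}$; your argument is more self-contained, using only classical Bessel-potential formulas, and it correctly observes that the multiplier computation itself needs only $s>0$, with $s<n$ required so that $H^{s/2}(\R^n)$ and $H^{(s+1)/2}(\R^{n+1})$ are genuine spaces of tempered distributions (indeed $(s+1)/2<(n+1)/2$ iff $s<n$). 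It is worth noting that the two proofs ultimately rest on the same fact: on the Fourier side the BRT identity is nothing but the symmetry $K_{s/2}=K_{-s/2}$ that you invoke explicitly. Your density argument justifying the fibrewise factorisation is at least as careful as the paper, which performs the analogous formal interchanges without comment.
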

\begin{proof}
In proving this theorem, we make use of the identity (see \cite[Lemma 2.2]{BRT}, also \cite{ST})
\begin{equation}
\label{eq:po}
\rho^sf\ast u_{s,\rho}(x)=c_{n,s}(-\Delta)^{s/2}f\ast u_{-s,\rho}(x)
\end{equation}
where $c_{n,s}$ is an explicit constant. For a function $\varphi(x,y)$ on $\R^{n+1}$ note that 
$$
\widehat{\varphi}(\xi,\eta)=\int_{\R}e^{-iy \eta}\Big(\int_{\R^n}\varphi(x,y)e^{-ix\cdot \xi}\,dx\Big)\,dy.
$$
Consequently, taking into account of \eqref{eq:po},
$$
(P_sf)^{\widehat{}}(\xi,\eta)=c_{n,s}\Big(\int_{\R^{n+1}}(y^2+|x|^2)^{-\frac{n-s}{2}}e^{-i(x\cdot \xi+y\eta)}\,dx\,dy\Big)\big((-\Delta)^{s/2}f\big)^{\widehat{}}(\xi).
$$
Since $(y^2+|x|^2)^{-\frac{n+1-(1+s)}{2}}$ is homogeneous of degree $-(n+1)-(1+s)$ on $\R^{n+1}$, its Fourier transform is a constant multiple of $(\eta^2+|\xi|^2)^{-\frac{s+1}{2}}$. Thus
$$
(P_sf)^{\widehat{}}(\xi,\eta)=c_{n,s}(\eta^2+|\xi|^2)^{-\frac{s+1}{2}}|\xi|^s\widehat{f}(\xi),
$$
where $c_{n,s}$ is an explicit constant. In view of the above calculations,
$$
\|P_sf\|_{H^{(s+1)/2}(\R^{n+1})}^2=c_{n,s}^2\int_{\R^n}\int_{\R}(\eta^2+|\xi|^2)^{-\frac{s+1}{2}}|\xi|^{2s}|\widehat{f}(\xi)|^2\,d\eta\,d\xi,
$$
which after integration in the $\eta$ variable gives 
$$
\|P_sf\|_{H^{(s+1)/2}(\R^{n+1})}^2=c_{n,s}^2\int_{\R^{n}}|\xi|^{s}|\widehat{f}(\xi)|^2\,d\eta\,d\xi=c_{n,s}^2\|f\|_{H^{s/2}(\R^{n})}^2.
$$
This proves the theorem.
\end{proof}

Analogously as in the context of $H$-type groups, we introduce the following variant of a Sobolev space. Let $\widetilde{W}_0^{s,2}(\R^n)$ be the completion of $C_0^{\infty}(\R^n\times \R)$ with respect to the norm 
$$
\|u\|_{(s)}^2=\int_0^{\infty}\int_{\R^n}|\nabla u(x,\rho)|^2\rho^{1-s}\,dx\,d\rho.
$$ 
It can be easily checked that this is indeed a norm.
We have the following.
\begin{prop}
\label{prop:1}
For $ 0 < s < 1, $ let $ f$ be a real valued function in $  H^{s/2}(\R^n)$, i.e.,  $ f$, $ (-\Delta)^{s/4}f\in L^2(\R^n)$.  If $ u $ is the solution of the extension problem \eqref{eq:epEu} with initial condition $ f$, then
$$
\int_0^{\infty}\int_{\R^n}|\nabla u(x,\rho)|^2\rho^{1-s}\,dx\,d\rho=C\| f\|_{H^{s/2}(\R^{n})}^2.
$$
\end{prop}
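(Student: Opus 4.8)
The plan is to pass to the partial Fourier transform in the $x$ variable and reduce the whole computation to a one-dimensional integral. Writing $\widehat{u}(\xi,\rho)$ for the Fourier transform of $u(\cdot,\rho)$, equation \eqref{eq:epEu} becomes, for each fixed $\xi$, the ordinary differential equation
$$
\partial_\rho^2\widehat{u}(\xi,\rho)+\frac{1-s}{\rho}\partial_\rho\widehat{u}(\xi,\rho)-|\xi|^2\widehat{u}(\xi,\rho)=0,
$$
with $\widehat{u}(\xi,0)=\widehat{f}(\xi)$. By scaling, the bounded solution is $\widehat{u}(\xi,\rho)=\widehat{f}(\xi)\,\phi_s(|\xi|\rho)$, where $\phi_s$ is the unique solution of $\phi''+\frac{1-s}{r}\phi'-\phi=0$ that stays bounded on $(0,\infty)$ and is normalised by $\phi_s(0)=1$; explicitly $\phi_s(r)=c_s\,r^{s/2}K_{s/2}(r)$ in terms of the modified Bessel function $K_{s/2}$. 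First I would record this representation together with $\partial_\rho\widehat{u}(\xi,\rho)=|\xi|\widehat{f}(\xi)\,\phi_s'(|\xi|\rho)$.

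Next I would compute the energy by Plancherel in $x$. Since $|\nabla u|^2=|\nabla_x u|^2+|\partial_\rho u|^2$, the Plancherel theorem gives, up to the harmless factor $(2\pi)^{-n}$,
$$
\int_{\R^n}|\nabla u(x,\rho)|^2\,dx=\int_{\R^n}|\xi|^2\,|\widehat{f}(\xi)|^2\big(\phi_s(|\xi|\rho)^2+\phi_s'(|\xi|\rho)^2\big)\,d\xi.
$$
Multiplying by $\rho^{1-s}$, integrating in $\rho$, and substituting $r=|\xi|\rho$ factors out a power of $|\xi|$:
$$
\int_0^\infty\big(\phi_s(|\xi|\rho)^2+\phi_s'(|\xi|\rho)^2\big)\rho^{1-s}\,d\rho=|\xi|^{s-2}\,C_s,\qquad C_s=\int_0^\infty\big(\phi_s(r)^2+\phi_s'(r)^2\big)r^{1-s}\,dr.
$$
Interchanging the order of integration, which is legitimate by nonnegativity, then yields
$$
\int_0^\infty\int_{\R^n}|\nabla u|^2\rho^{1-s}\,dx\,d\rho=C_s\int_{\R^n}|\xi|^s\,|\widehat{f}(\xi)|^2\,d\xi=C\,\|f\|_{H^{s/2}(\R^n)}^2,
$$
the last equality being the Fourier-side description of the homogeneous Sobolev norm, which is finite precisely because $(-\Delta)^{s/4}f\in L^2$.

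The step I expect to be the crux is the finiteness of $C_s$, which rests on the asymptotics of $\phi_s$. The indicial roots of the ODE at $r=0$ are $0$ and $s$, so $\phi_s(r)=1+O(r^s)$ and $\phi_s'(r)=O(r^{s-1})$ as $r\to 0^+$; hence $\phi_s'(r)^2 r^{1-s}=O(r^{s-1})$ is integrable near the origin exactly because $s>0$, while $\phi_s(r)^2 r^{1-s}=O(r^{1-s})$ is integrable since $s<2$. As $r\to\infty$ both $\phi_s$ and $\phi_s'$ decay like $e^{-r}$, so the integral converges there as well, giving $0<C_s<\infty$.

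An alternative route avoids Bessel asymptotics altogether: rewriting \eqref{eq:epEu} as $\partial_\rho(\rho^{1-s}\partial_\rho u)+\rho^{1-s}\Delta u=0$, multiplying by $u$, and integrating over $\R^n\times(0,\infty)$, one integrates by parts in $x$ and in $\rho$. The bulk terms reproduce $\int_0^\infty\int_{\R^n}|\nabla u|^2\rho^{1-s}\,dx\,d\rho$, while the only surviving boundary term is $-\int_{\R^n}f\,\lim_{\rho\to0}\rho^{1-s}\partial_\rho u\,dx$, which the Caffarelli--Silvestre relation $-\lim_{\rho\to0}\rho^{1-s}\partial_\rho u=c_s(-\Delta)^{s/2}f$ identifies with $c_s\int_{\R^n}f(-\Delta)^{s/2}f\,dx=c_s\|f\|_{H^{s/2}(\R^n)}^2$. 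With this approach the difficulty merely shifts to justifying the vanishing of the remaining boundary terms at $\rho=0$ and $\rho=\infty$, which again follows from the decay of $\phi_s$; I would therefore present the Fourier argument as the main line, since it simultaneously yields the identity and the explicit constant.
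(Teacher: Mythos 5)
Your proof is correct, but your main line is genuinely different from the paper's argument --- in fact, what you present as the ``alternative route'' at the end is precisely the paper's proof of Proposition \ref{prop:1}: the authors establish that $u$, $\partial_{x_j}u$ vanish at infinity in $x$ (via the fact that convolutions of $L^2$ functions vanish at infinity) and that $u,\partial_\rho u\to 0$ as $\rho\to\infty$, integrate by parts in $x$ and in $\rho$ against the weight $\rho^{1-s}$, observe that the bulk terms cancel because $\partial_\rho(\rho^{1-s}\partial_\rho u)+\rho^{1-s}\Delta u=0$ is equivalent to \eqref{eq:epEu}, and identify the surviving boundary term through $-\lim_{\rho\to 0}\rho^{1-s}\partial_\rho u=C_s(-\Delta)^{s/2}f$. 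Your Fourier argument is sound: the representation $\widehat{u}(\xi,\rho)=\widehat{f}(\xi)\phi_s(|\xi|\rho)$ with $\phi_s(r)=c_s r^{s/2}K_{s/2}(r)$ is exactly the profile the paper itself uses (as the function $G(\rho|\xi|)$) in its proof of Proposition \ref{prop:2}, your scaling computation correctly produces the factor $|\xi|^{s-2}$, and your asymptotics ($\phi_s'(r)^2r^{1-s}=O(r^{s-1})$ near $0$, exponential decay at infinity) do verify $0<C_s<\infty$; the interchange of integrals is justified by nonnegativity as you say. The trade-off between the two routes is worth noting. Your Plancherel computation buys the identity together with a fully explicit constant $C_s=\int_0^\infty\bigl(\phi_s(r)^2+\phi_s'(r)^2\bigr)r^{1-s}\,dr$, and it needs no appeal to the Dirichlet-to-Neumann limit theorem, only Bessel asymptotics. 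The paper's integration-by-parts route, by contrast, never touches the Euclidean Fourier transform of $u$ in the $\rho$-variable structure, and for that reason transfers verbatim to the setting where this proposition is actually needed: the authors repeat the identical argument for the sublaplacian on $H$-type groups in Proposition \ref{prop:equ}, where no such explicit one-dimensional profile computation in a Euclidean frequency variable is available. If you intend your argument as a model for the group case, you would have to replace your main line by your alternative route; as a self-contained proof of the Euclidean statement, your version is complete and arguably sharper.
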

\begin{proof}
In what follows, we make use of the following fact which can be easily proved. If $ f $ and $ g $ are from $ L^2(\R^n) $ then their convolution $ f*g $ is uniformly continuous and vanishes at infinity. This can be proved by approximating $ f $ and $ g $ by sequence of compactly supported smooth functions. Since $ f $ and $ \rho^{s} u_{s,\rho} $ are from $ L^2(\R^n) $ it follows that the solution $ u $ of the extension problem vanishes at infinity as a function of $ x $ for any fixed $ \rho.$ Moreover, $ \partial_{x_j} u_{s,\rho} \in L^2(\R^n) $. The same is true of $ \partial_\rho u_{s,\rho}$.  Hence $ \partial_{x_j} u $ also vanishes at infinity. Integrating by parts and making use of the above, we see that
$$  
\int_{\R^n} |\partial_{x_j}u(x,\rho)|^2 \,dx= \int_{\R^n} u(x,\rho) \partial_{x_j}^2u(x,\rho) \,dx.
$$
We also observe that $ |u(x,\rho)| \leq \rho^{s} \|f\|_2 \| u_{s,\rho}\|_2 $ which after a simple calculation shows that $ u(x,\rho) $ goes to $ 0 $ as $ \rho $ tends to infinity. The same is true of $ \partial_\rho u(x,\rho).$
 In view of these, a similar calculation with the $\rho$-derivative gives
$$
\int_0^{\infty}(\partial_{\rho} u(x,\rho))^2 \rho^{1-s} d\rho   =\int_0^{\infty} u(x,\rho) \pa_\rho\big( \rho^{1-s}\pa_\rho u(x,\rho)\big)\,d\rho- u(x, 0)\,\lim_{\rho \rightarrow 0}\big(\rho^{1-s}\pa_{\rho} u\big)(x, \rho).
$$
Adding them up and recalling that $ u $ solves the extension problem with initial condition $ f$ we obtain the result.
\end{proof}
\begin{prop}
\label{prop:2}
Under the same hypotheses as in Proposition \ref{prop:1}, we have
$$
\int_0^{\infty}\int_{\R^n}u(x,\rho)^2\rho^{-1-s}\,dx\,d\rho= C\| f\|_{H^{s/2}(\R^{n})}^2.
$$
\end{prop}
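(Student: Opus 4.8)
The plan is to diagonalise the extension operator by taking the partial Fourier transform in the $x$ variable, exactly as in the proof of the isometry theorem above. Writing $\widehat{u}(\xi,\rho)$ for the Fourier transform of $u(\cdot,\rho)$, the equation \eqref{eq:epEu} becomes the ordinary differential equation
\[
\partial_\rho^2\widehat{u}+\frac{1-s}{\rho}\partial_\rho\widehat{u}-|\xi|^2\widehat{u}=0,\qquad \widehat{u}(\xi,0)=\widehat{f}(\xi),
\]
in $\rho$, with $\xi$ playing the role of a parameter. Its unique bounded solution is $\widehat{u}(\xi,\rho)=\widehat{f}(\xi)\,\phi(|\xi|\rho)$, where $\phi(r)=\frac{2^{1-s/2}}{\Gamma(s/2)}\,r^{s/2}K_{s/2}(r)$ is the normalised Macdonald function solving $\phi''+\frac{1-s}{r}\phi'-\phi=0$ with $\phi(0)=1$; the second solution of this equation grows exponentially as $r\to\infty$ and is discarded by requiring $u$ to be bounded. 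Equivalently, $\phi(|\xi|\rho)$ is the Fourier transform of the kernel $c_{n,s}\rho^s u_{s,\rho}$ appearing in the Stinga--Torrea representation $u=c_{n,s}\rho^s f\ast u_{s,\rho}$.

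Next I would substitute this into the left-hand side and apply Plancherel in $x$ together with Fubini, obtaining
\[
\int_0^\infty\!\!\int_{\R^n}u(x,\rho)^2\rho^{-1-s}\,dx\,d\rho=(2\pi)^{-n}\int_{\R^n}|\widehat{f}(\xi)|^2\Big(\int_0^\infty\phi(|\xi|\rho)^2\rho^{-1-s}\,d\rho\Big)\,d\xi.
\]
The inner integral is made homogeneous by the scaling substitution $t=|\xi|\rho$, which turns it into $|\xi|^{s}\int_0^\infty\phi(t)^2 t^{-1-s}\,dt$. Denoting the resulting one-dimensional integral by $C$, the right-hand side becomes $C(2\pi)^{-n}\int_{\R^n}|\xi|^{s}|\widehat{f}(\xi)|^2\,d\xi$, which is a constant multiple of $\|f\|_{H^{s/2}(\R^n)}^2$ since $\|f\|_{H^{s/2}}^2=\|(-\Delta)^{s/4}f\|_2^2=(2\pi)^{-n}\int_{\R^n}|\xi|^{s}|\widehat{f}|^2$. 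This would yield the claimed identity with a constant depending only on $n$ and $s$.

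The one point requiring genuine care, and the main obstacle, is the behaviour at the endpoint $\rho\to0^+$. Because $\phi(0)=1$ (equivalently $u(\cdot,\rho)\to f$ as $\rho\to0$), the weight exponent $-1-s$ is precisely the one that produces the factor $|\xi|^{s}$ after scaling, so this weight is forced by the target norm; but it is simultaneously the borderline case for the integrability of $\phi(t)^2 t^{-1-s}$ at $t=0$, so this endpoint is exactly where the entire content of the statement concentrates and must be analysed from the precise small-$r$ expansion $\phi(r)=1+O(r^{s})+O(r^2)$ rather than from any crude estimate (the other endpoint is harmless, since $\phi(r)\sim e^{-r}$ as $r\to\infty$). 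An alternative route, parallel to the proof of Proposition~\ref{prop:1} and avoiding Bessel functions, would integrate the identity $\partial_\rho(u^2\rho^{-s})=2uu_\rho\rho^{-s}-s\,u^2\rho^{-s-1}$ in $\rho$, rewrite the $uu_\rho$ term by means of the extension equation, and track the boundary contribution at $\rho=0$; this is precisely where the delicate endpoint analysis reappears, and it must be reconciled with the Dirichlet-to-Neumann limit $\lim_{\rho\to0}\rho^{1-s}\partial_\rho u$ already computed in Proposition~\ref{prop:1}.
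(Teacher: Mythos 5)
Your reduction is exactly the one the paper uses: after a partial Fourier transform in $x$ the solution factorises as $\widehat{u}(\xi,\rho)=\widehat{f}(\xi)\phi(\rho|\xi|)$ (the paper writes this as $c_{n,s}\widehat{f}(\xi)G(\rho|\xi|)$ with $G$ the Fourier transform of $(1+|x|^2)^{-\frac{n+s}{2}}$, which is precisely your normalised Macdonald function up to the constant $c_{n,s}$), and Plancherel plus the scaling $t=\rho|\xi|$ pulls out $|\xi|^{s}$ times the one--dimensional constant $C=\int_0^\infty \phi(t)^2\,t^{-1-s}\,dt$. But the step you postponed --- the endpoint $t\to 0^+$ --- is not a delicate point that a finer expansion settles; it is a step that fails. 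Since $\phi(0)=1$, one has $\phi(t)^2 t^{-1-s}\sim t^{-1-s}$ as $t\to0^+$, so $C=+\infty$ for every $0<s<1$: no expansion $\phi(t)=1+O(t^{s})+O(t^{2})$ can rescue an integrand that does not vanish at the origin. The divergence is visible without any Fourier analysis: $u(\cdot,\rho)\to f$ in $L^2(\R^n)$ as $\rho\to 0$, so $\int_{\R^n}u(x,\rho)^2\,dx\ge \frac12\|f\|_2^2$ for all small $\rho$, while $\int_0^{\rho_0}\rho^{-1-s}\,d\rho=\infty$. Hence the left-hand side of the Proposition is infinite whenever $f\neq 0$, and your argument, carried honestly to its end, disproves rather than proves the stated identity. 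As written, your proposal therefore has a genuine gap at precisely the point you flagged.

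It is worth recording that you have in fact located a flaw in the paper's own proof, which performs the identical computation and then asserts that ``$G(\rho)$ behaves like $\rho^{s}$ near $0$''. That is the asymptotics of $G(\rho)-G(0)$, not of $G$: necessarily $c_{n,s}G(0)=\lim_{\rho\to0}\widehat{u}(\xi,\rho)/\widehat{f}(\xi)=1$, i.e.\ $G(0)=c_{n,s}^{-1}>0$ (indeed $G$ is a constant multiple of $\rho^{s/2}K_{s/2}(\rho)$, which tends to a positive constant at the origin --- your own normalisation $\phi(0)=1$ says the same thing). Your computation does prove the corrected statement in which $u$ is replaced by $u-f$: the multiplier becomes $\phi(\rho|\xi|)-1$, and $\int_0^\infty(\phi(t)-1)^2 t^{-1-s}\,dt$ converges, the integrand being $O(t^{s-1})$ near $0$ (from $\phi(t)-1=O(t^{s})$) and $O(t^{-1-s})$ at infinity, yielding $\int_0^\infty\int_{\R^n}(u-f)^2\rho^{-1-s}\,dx\,d\rho=C\|f\|^2_{H^{s/2}(\R^n)}$ with a finite constant. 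It is this corrected version (or a localised variant) that can legitimately feed the cutoff argument in Proposition \ref{prop:Ws2}, where the appeal to Proposition \ref{prop:2} must be adjusted accordingly.
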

\begin{proof}
Observe that 
$$
\widehat{u}(\xi,\rho)=c_{n,s}\rho^{s}\widehat{f}(\xi)\int_{\R^n}(\rho^2+|x|^2)^{-\frac{n+s}{2}}e^{-i x\cdot \xi}\,dx=c_{n,s}\widehat{f}(\xi)G(\rho|\xi|),
$$
where $ G $ is defined by  $$G(\xi):=\int_{\R^n}(1+|x|^2)^{-\frac{n+s}{2}}e^{-i x\cdot \xi}\,dx.$$ The asymptotic properties of $ G $ are well known, see \cite[p. 132, (29), (30)]{StSingular}. As $ G(\rho) $ behaves like $ \rho^s $ near $ 0 $ it follows that $ \int_0^{\infty}G(\rho)^2\rho^{-1-s}\,d\rho < \infty $ and hence
\begin{multline*}
\int_0^{\infty}\int_{\R^n}|\widehat{f}(\xi)|^2G(\rho|\xi|)^2\rho^{-1-s}\,d\rho\,d\xi\\
=\int_{\R^n}|\widehat{f}(\xi)|^2|\xi|^s\Big(\int_0^{\infty}G(\rho)^2\rho^{-1-s}\,d\rho\Big)\,d\xi=C\| f\|_{H^{s/2}(\R^{n})}^2.
\end{multline*}
\end{proof}
\begin{prop}
\label{prop:Ws2}
For $ 0 < s < 1, $ let $ f $ be a real valued function in $  H^s(\R^n)$.  If $ u $ is the solution of the extension problem \eqref{eq:epEu} with initial condition $ f$, then 
  $ u \in \widetilde{W}_0^{s,2}(\R^n)$. 
\end{prop}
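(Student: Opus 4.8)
The plan is to show that the solution $u$, which is $C^\infty$ on $\R^n\times(0,\infty)$, can be approximated in the $\|\cdot\|_{(s)}$ norm by elements of $C_0^\infty(\R^n\times\R)$ obtained by truncating $u$ successively in $\rho$ (near both $0$ and $\infty$) and in $x$. The two facts already available are decisive: Proposition \ref{prop:1} gives $\|u\|_{(s)}^2=\int_0^\infty\int_{\R^n}|\nabla u|^2\rho^{1-s}\,dx\,d\rho<\infty$, and Proposition \ref{prop:2} gives the Hardy-type bound $\int_0^\infty\int_{\R^n}u^2\rho^{-1-s}\,dx\,d\rho<\infty$. The first controls the tails of the gradient, while the second is exactly what lets us cut off near the boundary $\rho=0$ without losing mass.

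First I would introduce smooth cutoffs $\zeta_R(\rho)$ (equal to $1$ on $\rho\le R$, to $0$ on $\rho\ge 2R$, with $|\zeta_R'|\le C/R$) and $\theta_\delta(\rho)$ (equal to $0$ on $\rho\le\delta$, to $1$ on $\rho\ge2\delta$, with $|\theta_\delta'|\le C/\delta$), and estimate the two resulting truncation errors. For the far cutoff, $\|u-\zeta_R u\|_{(s)}^2$ splits into a term $\int\int(1-\zeta_R)^2|\nabla u|^2\rho^{1-s}$, which tends to $0$ as $R\to\infty$ by dominated convergence thanks to Proposition \ref{prop:1}, and a commutator term $\int\int|\zeta_R'|^2u^2\rho^{1-s}$. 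On the annulus $R\le\rho\le2R$ one writes $\rho^{1-s}=\rho^2\rho^{-1-s}\le(2R)^2\rho^{-1-s}$, so the commutator is bounded by $4C^2\int_{R\le\rho\le2R}\int_{\R^n}u^2\rho^{-1-s}$, a vanishing tail of the integral in Proposition \ref{prop:2}. The same bookkeeping with $\theta_\delta$ handles the boundary: its commutator is bounded by $4C^2\int_{\delta\le\rho\le2\delta}\int_{\R^n}u^2\rho^{-1-s}\to0$ as $\delta\to0$, and the remaining term vanishes by dominated convergence. Thus $\theta_\delta\zeta_R u$ is supported in $\{\delta\le\rho\le2R\}$ and approximates $u$.

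Next I would truncate in $x$. On the slab $\delta\le\rho\le2R$ the weight $\rho^{1-s}$ is bounded, and from the identity $\widehat{u}(\xi,\rho)=c_{n,s}\widehat{f}(\xi)G(\rho|\xi|)$ used in the proof of Proposition \ref{prop:2}, together with $G\in L^\infty$, one obtains $\|u(\cdot,\rho)\|_2\le C\|f\|_2$ uniformly on the slab; hence $\theta_\delta\zeta_R u\in L^2(\R^n\times(\delta,2R),\rho^{1-s}\,dx\,d\rho)$. Multiplying by a spatial cutoff $\psi_L(x)$ (equal to $1$ on $|x|\le L$, to $0$ on $|x|\ge2L$, with $|\nabla\psi_L|\le C/L$) gives a commutator part $\le(C/L)^2\int\int_{L\le|x|\le2L}(\theta_\delta\zeta_R u)^2\rho^{1-s}\to0$ and a remaining part which is a spatial tail of the finite integral $\int\int|\nabla(\theta_\delta\zeta_R u)|^2\rho^{1-s}$. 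The function $w=\psi_L\theta_\delta\zeta_R u$ is smooth and compactly supported inside $\R^n\times(0,\infty)\subset\R^n\times\R$, so $w\in C_0^\infty(\R^n\times\R)$, and a three-term triangle inequality (choosing $R$, then $\delta$, then $L$) makes $\|u-w\|_{(s)}$ as small as desired, proving $u\in\widetilde{W}_0^{s,2}(\R^n)$.

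The main obstacle is the cutoff near $\rho=0$: since $u$ carries the nonzero boundary value $f$ and is not smooth up to $\rho=0$, one cannot evaluate there, and approximating by functions supported away from the boundary would a priori lose norm. It is precisely Proposition \ref{prop:2} that controls this, converting the dangerous factor $|\theta_\delta'|^2\rho^{1-s}\approx\delta^{-2}\rho^{1-s}$ into the benign $\rho^{-1-s}$ whose integral over $(\delta,2\delta)$ is a vanishing tail. A secondary technical point is the uniform $L^2$ bound and the smoothness of $u$ needed for the spatial cutoff, both of which follow from differentiating the Poisson integral under the integral sign and from $f\in L^2(\R^n)$.
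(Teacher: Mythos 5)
There is a genuine gap, and it sits exactly where you place your confidence: the cutoff $\theta_\delta$ near $\rho=0$. Your commutator estimate bounds $\int\!\!\int|\theta_\delta'|^2u^2\rho^{1-s}$ by $C\int_{\delta\le\rho\le2\delta}\int_{\R^n} u^2\rho^{-1-s}\,dx\,d\rho$ and declares this a vanishing tail of the integral in Proposition \ref{prop:2}. But that integral is infinite whenever $f\neq0$: since $u(\cdot,\rho)\to f$ in $L^2(\R^n)$, we have $\int_{\R^n}u(x,\rho)^2\,dx\to\|f\|_2^2>0$, while $\rho^{-1-s}$ is not integrable at $\rho=0$. (Proposition \ref{prop:2} must be read with $u(x,\rho)-f(x)$ in place of $u$: in its proof $G(0)=c_{n,s}^{-1}\neq0$, and it is $G(\rho)-G(0)$, not $G(\rho)$, that is $O(\rho^{s})$ near $0$.) Quantitatively, your commutator term does not merely lose its upper bound --- it blows up: by Cauchy--Schwarz, $\int_\delta^{2\delta}|\theta_\delta'|^2\rho^{1-s}\,d\rho\ge c\,\delta^{-s}$ for \emph{any} admissible cutoff, and $\|u(\cdot,\rho)\|_2^2\approx\|f\|_2^2$ on $[\delta,2\delta]$, so the term is bounded below by $c\,\|f\|_2^2\,\delta^{-s}\to\infty$. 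The failure is structural, not technical: functions vanishing near $\{\rho=0\}$ can never converge to $u$ in $\|\cdot\|_{(s)}$, because the trace Hardy inequality (the Euclidean analogue of Theorem \ref{thm:GtHi}) makes the trace continuous in this norm, so any such limit has trace $0$, whereas $u$ has trace $f$.

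The misstep comes from a misreading of the target space: $\widetilde{W}_0^{s,2}(\R^n)$ is the completion of $C_0^{\infty}(\R^n\times\R)$, so test functions are compactly supported in the \emph{full} space and may be nonzero on $\{\rho=0\}$; nothing forces support in $\R^n\times(0,\infty)$. This is precisely why the paper never cuts off near the boundary: it extends $u$ evenly in $\rho$ and uses the single joint cutoff $\psi_j=\eta\big(2^{-j}(\rho^2+|x|^2)\big)$, which equals $1$ on a large neighbourhood of the relevant boundary piece, so only the region $\rho^2+|x|^2\ge 2^j$ contributes. Your far cutoff $\zeta_R$ and spatial cutoff $\psi_L$ are fine (and need only the uniform bound $\|u(\cdot,\rho)\|_2\le C\|f\|_2$, not Proposition \ref{prop:2}), so the repair is to drop $\theta_\delta$ entirely and handle the single commutator term as the paper does in the group case (Theorem \ref{thm:Ws2N}): on the support of $\nabla\psi_j$ one has $|\nabla\psi_j|\le C2^{-j/2}$, whence
\begin{equation*}
\int_0^\infty\!\!\int_{\R^n}|\nabla\psi_j|^2u^2\rho^{1-s}\,dx\,d\rho\le C2^{-j}\|f\|_2^2\int_0^{2^{(j+1)/2}}\rho^{1-s}\,d\rho=C'\|f\|_2^2\,2^{-js/2}\longrightarrow 0.
\end{equation*}
(Alternatively, split $u=(u-f)+f$ and apply the corrected Proposition \ref{prop:2} to the first piece; note that the paper's own Euclidean proof invokes Proposition \ref{prop:2} literally and needs the same repair, which its even-extension strategy tolerates, while yours does not.)
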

\begin{proof}
We know that the function 
$$
u=c_{n,s}f\ast \rho^{s}(\rho^2+|x|^2)^{-\frac{n+s}{2}}
$$
solves the extension problem \eqref{eq:epEu} and 
$$
\int_0^{\infty}\int_{\R^n}|\nabla u(x,\rho)|^2\rho^{1-s}\,dx\,d\rho<\infty.
$$
Then, we have to approximate $u$ by $C_0^{\infty}$ functions on $\R^n\times \R$. Let $\eta\in C_0^{\infty}(\R)$ be such that $\eta(t)=0$ for $|t|>2$ and $\eta(t)=1$ for $|t|\le 1$. Consider 
$$
u_j(x,\rho)=\eta\big(2^{-j}(\rho^2+|x|^2)\big)u(x,\rho)=:\psi_j(x,\rho)u(x,\rho),
$$
after extending $u$ to $\R^n\times \R$ as an even function in $\rho$. Then, $u_j$ is compactly supported and 
$$
\nabla u_j=\nabla \psi_j u+\psi_j\nabla u
$$
so that
$$
\nabla(u-u_j)=\nabla \psi_j u+(1-\psi_j)\nabla u.
$$
Note that $1-\psi_j=0$ on $(\rho^2+|x|^2)\le 2^j$. Hence
$$
\int_0^{\infty}\int_{\R^n}|(1-\psi_j)\nabla u|^2\rho^{1-s}\,dx\,d\rho\le \int_{(\rho^2+|x|^2)>2^j}|\nabla u|^2\rho^{1-s}\,dx\,d\rho\to 0 \quad \text{as} \quad j\to \infty.
$$
On the other hand, as $\eta'=0$ on $(\rho^2+|x|^2)\le 2^j$ and $(\rho^2+|x|^2)\ge 2^{j+1}$, 
$$
\big|\frac{\partial}{\partial x_j}\psi_j(x,\rho)\big|=2^{-j}2|x_j|\eta'\big(2^{-j}(\rho^2+|x|^2)\big)\le 2^{-j}(\rho^2+|x|^2)^{1/2}\eta'\big(2^{-j}(\rho^2+|x|^2)\big)\le C\rho^{-1}.
$$
Similarly, $|\partial_{\rho}\psi_j(x,\rho)|\le C\rho^{-1}$. Therefore,
\begin{align*}
\int_0^\infty \int_{\R^n} |\nabla \psi_j(x,\rho)|^2 |u|^2 \rho^{1-s}\,dx\, d\rho 
&\leq C  \int_{ \rho^2+|x|^2 \geq 2^j } | u|^2 \rho^{-1-s}\,dx\, d\rho\\
&\le C \int_0^{\infty}\int_{\R^n} | u|^2 \rho^{-1-s}\,dx\, d\rho=\|(-\Delta)^{s/4}f\|_2<\infty,
\end{align*}
by Proposition \ref{prop:2} and the hypothesis on $f$.
Hence $\int_{ \rho^2+|x|^2 \geq 2^j } | u|^2 \rho^{-1-s}\,dx\, d\rho\to 0$ as $j\to \infty$, i.e., 
$$
\int_0^\infty \int_{\R^n} |\nabla \psi_j(x,\rho)|^2 |u|^2 \rho^{1-s}\,dx\, d\rho \to 0 \quad \text{as} \quad j\to \infty.
$$
Thus, $u$ is approximated by $u_j$, and the proof is complete. 
\end{proof}

We now turn to the case of the Heisenberg group. Here the Sobolev spaces $H^s(\H^n)$ are defined via the relation $f\in H^s(\H^n)$ if and only if $\mathcal{L}_s^{1/2}f\in L^2(\H^n)$ where $\mathcal{L}_s$ is the conformally invariant fractional power of $\mathcal{L}$. In view of the Plancherel theorem for the Heisenberg group, we have
$$
\|f\|_{H^s(\H^n)}^2=(2\pi)^{-n-1}\int_{-\infty}^{\infty}\|(\mathcal{L}_s^{1/2}f)^{\widehat{}}(\lambda)\|_{\operatorname{HS}}^2|\lambda|^n\,d\lambda.
$$
Recalling the definition of $\mathcal{L}_s$ we note that
$$
(\mathcal{L}_s^{1/2}f)^{\widehat{}}(\lambda)=\widehat{f}(\lambda)\Big(\frac{\Gamma\big(\frac{H_n(\lambda)}{2|\lambda|}+\frac{1+s}{2}\big)}{\Gamma\big(\frac{H_n(\lambda)}{2|\lambda|}+\frac{1-s}{2}\big)}\Big)^{1/2}(2|\lambda|)^{s/2}
$$
where $H_n(\lambda)$ stands for the scaled Hermite operator $-\Delta+\lambda^2|x|^2$ on $\R^n$ (we changed a little bit the notation from Subsection \ref{subsec:subl}). The Hilbert--Schmidt norm can be calculated in terms of the Hermite basis $\Phi_{\alpha}^{\lambda}(x)$, $\alpha\in \Na^n$. As they are eigenfunctions of the Hermite operator $H_n(\lambda)$ with eigenvalues $(2|\alpha|+n)|\lambda|$, we see that
$$
\|f\|_{H^s(\H^n)}^2=2^s(2\pi)^{-n-1}\int_{-\infty}^{\infty}\Big(\sum_{\alpha\in \Na^n}\frac{\Gamma\big(\frac{2|\alpha|+n+1+s}{2}\big)}{\Gamma\big(\frac{2|\alpha|+n+1-s}{2}\big)}\|\widehat{f}(\lambda)\Phi_{\alpha}^{\lambda}\|_2^2\Big)|\lambda|^{n+s}\,d\lambda.
$$

In calculating the norm of a function $u\in H^s(\H^{n+1})$ we make use of the orthonormal basis $\Phi_{\alpha,j}^{\lambda}(x,y)=\Phi_{\alpha}^{\lambda}(x)h_j^{\lambda}(y)$, $\alpha\in \Na^n$, $j\in \Na$ where $h_j^{\lambda}(y) = \Phi_j^\lambda(y)$ are the one dimensional Hermite functions forming an orthonormal basis for $L^2(\R)$. Thus
$$
\|u\|_{H^{s+1}(\H^{n+1})}^2=2^{s+1}(2\pi)^{-n-2}\int_{-\infty}^{\infty}\Big(\sum_{(\alpha,j)\in \Na^n\times \Na}\frac{\Gamma\big(\frac{2|\alpha|+(n+1)+1+(1+s)}{2}\big)}{\Gamma\big(\frac{2|\alpha|+(n+1)+1-(1+s)}{2}\big)}\|\widehat{u}(\lambda)\Phi_{\alpha,j}^{\lambda}\|_2^2\Big)|\lambda|^{(n+s+2)}\,d\lambda.
$$

We are now ready to prove the folllowing result. Let $P_sf(z,\zeta,t)=C_1(n,s)|\zeta|^{2s}f\ast \varphi_{s,|\zeta|}(z,t)$ be the solution operator. 
\begin{thm}
\label{thm:MOZ}
For $0<s<n+1,$  the solution operator $P_s:H^{s}(\H^n)\to H^{s+1}(\H^{n+1})$ satisfies the isometry property
$$
\|u\|_{H^{s+1}(\H^{n+1})}^2=\frac{\pi^{2n+4}\Gamma(s)\Gamma(1+s)}{2^{2n-2-6s}\Gamma\big(\frac{n+1-s}{2}\big)^4}\|f\|_{H^{s}(\H^{n})}^2.
$$
\end{thm}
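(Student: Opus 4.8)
The plan is to compute $\|u\|_{H^{s+1}(\H^{n+1})}^2$ directly on the Fourier transform side, in the Hermite basis $\Phi_{\alpha,j}^\lambda=\Phi_\alpha^\lambda\otimes h_j^\lambda$, and to match it term by term against the formula for $\|f\|_{H^s(\H^n)}^2$ recorded above. The starting observation is that the Schr\"odinger representation of $\H^{n+1}$ factors under $L^2(\R^{n+1})=L^2(\R^n)\otimes L^2(\R)$ as $\pi_\lambda^{(n+1)}((z,\zeta),t)=e^{i\lambda t}\,\pi_\lambda^{(n)}(z,0)\otimes\pi_\lambda^{(1)}(\zeta,0)$, the two Heisenberg factors sharing the common centre (the symplectic form splits with no cross term). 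Taking the inverse Fourier transform in the central variable $t$ turns the group convolution defining $u=P_sf$ into a twisted convolution in $z$, namely $u^\lambda(z,\zeta)=C_1(n,s)|\zeta|^{2s}(f^\lambda\times_\lambda\varphi_{s,|\zeta|}^\lambda)(z)$, where $\times_\lambda$ is the $\lambda$-twisted convolution on $\C^n$. Consequently, writing $W_\lambda$ for the Weyl transform on $\C^n$ (so that $\widehat f(\lambda)=W_\lambda(f^\lambda)$ and $W_\lambda(g\times_\lambda h)=W_\lambda(g)W_\lambda(h)$), the group Fourier transform of $u$ on $\H^{n+1}$ is
\[
\widehat u(\lambda)=C_1(n,s)\int_{\C}|\zeta|^{2s}\,\widehat f(\lambda)\,W_\lambda(\varphi_{s,|\zeta|}^\lambda)\otimes\pi_\lambda^{(1)}(\zeta,0)\,d\zeta.
\]

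The second step uses that $\varphi_{s,\rho}$ is radial in the $z$-variable, so by the Laguerre formula of Subsection~\ref{subsec:reprH} its Weyl transform is diagonal, $W_\lambda(\varphi_{s,\rho}^\lambda)\Phi_\alpha^\lambda=R_{|\alpha|}(s,\rho,\lambda)\,\Phi_\alpha^\lambda$, with scalar coefficients $R_k(s,\rho,\lambda)=\widehat{\varphi_{s,\rho}}(\lambda,k)$. Applying $\widehat u(\lambda)$ to $\Phi_{\alpha,j}^\lambda$ and using this diagonalisation gives $\widehat u(\lambda)\Phi_{\alpha,j}^\lambda=C_1(n,s)\,(\widehat f(\lambda)\Phi_\alpha^\lambda)\otimes\big(W_\lambda^{(1)}(g_{\alpha,\lambda})h_j^\lambda\big)$, where $W_\lambda^{(1)}$ is the one–dimensional Weyl transform and $g_{\alpha,\lambda}(\zeta)=|\zeta|^{2s}R_{|\alpha|}(s,|\zeta|,\lambda)$ is again radial in $\zeta$. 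Summing over $j$ and invoking the Plancherel theorem for $W_\lambda^{(1)}$, namely $\|W_\lambda^{(1)}(g)\|_{\operatorname{HS}}^2=(2\pi)|\lambda|^{-1}\|g\|_{L^2(\C)}^2$, collapses the $j$–sum into a single integral:
\[
\sum_{j}\|\widehat u(\lambda)\Phi_{\alpha,j}^\lambda\|_2^2=C_1(n,s)^2\,\|\widehat f(\lambda)\Phi_\alpha^\lambda\|_2^2\,(2\pi)|\lambda|^{-1}\int_{\C}|\zeta|^{4s}\,|R_{|\alpha|}(s,|\zeta|,\lambda)|^2\,d\zeta.
\]

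It remains to evaluate this weighted $L^2$–integral of the coefficients. The scaling relation $\varphi_{s,\rho}(v,z)=\rho^{-2(n+1+s)}\varphi_{s,1}(\rho^{-1}v,\rho^{-2}z)$ forces $R_k(s,\rho,\lambda)=\rho^{-2s}\widetilde R_k(s,\rho^2\lambda)$, so the weight $|\zeta|^{4s}$ exactly cancels the $\rho^{-4s}$ and, after passing to polar coordinates in $\zeta$, the integral reduces to $\tfrac{\pi}{|\lambda|}\int_0^\infty|\widetilde R_k(s,u)|^2\,du$, a pure number in $k$ times $|\lambda|^{-1}$. The third and decisive step is therefore to compute $R_k(s,\rho,\lambda)=\widehat{\varphi_{s,\rho}}(\lambda,k)$ explicitly --- most conveniently from the heat–kernel representation \eqref{eq:homo} together with \eqref{eq:qt}, carrying out the $t$–integration against the Laguerre functions --- and to show that the resulting Beta–type integral satisfies $\int_0^\infty|\widetilde R_k(s,u)|^2\,du=c(n,s)\,(2k+n+1+s)^{-1}$. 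Granting this, inserting the value into the norm formula for $H^{s+1}(\H^{n+1})$ and using the identity $\Gamma\big(\tfrac{2|\alpha|+n+3+s}{2}\big)/\Gamma\big(\tfrac{2|\alpha|+n+1+s}{2}\big)=\tfrac{2|\alpha|+n+1+s}{2}$ cancels the factor $(2|\alpha|+n+1+s)^{-1}$ precisely; the leftover sum over $\alpha$ and integral over $\lambda$, now carrying the Gamma ratio $\Gamma\big(\tfrac{2|\alpha|+n+1+s}{2}\big)/\Gamma\big(\tfrac{2|\alpha|+n+1-s}{2}\big)$ and the weight $|\lambda|^{n+s}$, reproduce $\|f\|_{H^s(\H^n)}^2$ up to a constant.

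The main obstacle is exactly this third step: the explicit determination of the Laguerre coefficients $R_k(s,\rho,\lambda)$ of the Poisson kernel and the evaluation of the one–variable integral $\int_0^\infty|\widetilde R_k(s,u)|^2\,du$. The whole isometry hinges on the $k$–dependence of this integral collapsing to the single factor $(2k+n+1+s)^{-1}$; any residual $k$–dependence would survive the Gamma–ratio cancellation and reduce the statement from an isometry to a mere boundedness. The rest is bookkeeping --- tracking the powers of $2$, $\pi$, $|\lambda|$ and the Gamma factors through the three steps --- which is routine but must be carried out with care to land on the precise constant $\pi^{2n+4}\Gamma(s)\Gamma(1+s)\,2^{-(2n-2-6s)}\Gamma\big(\tfrac{n+1-s}{2}\big)^{-4}$.
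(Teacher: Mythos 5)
Your first two steps (factoring $\pi_\lambda(z,\zeta,t)=\pi_\lambda(0,\zeta,0)\pi_\lambda(z,0,t)$ and diagonalising the Weyl transform of the $\zeta$-radial kernel) are sound and match the paper's setup, but the central step fails: the Weyl--Plancherel collapse computes the \emph{unweighted} sum $\sum_j\|\widehat u(\lambda)\Phi_{\alpha,j}^\lambda\|_2^2$, whereas the $H^{s+1}(\H^{n+1})$ norm weights each term by the $j$-\emph{dependent} ratio $\Gamma\big(\tfrac{2|\alpha|+2j+n+3+s}{2}\big)\big/\Gamma\big(\tfrac{2|\alpha|+2j+n+1-s}{2}\big)$, which grows like $(2|\alpha|+2j+n+2)^{1+s}$ in $j$ and cannot be pulled out of the $j$-sum. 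Your final bookkeeping multiplies the collapsed sum by a purely $\alpha$-dependent factor and invokes $\Gamma\big(\tfrac{2|\alpha|+n+3+s}{2}\big)/\Gamma\big(\tfrac{2|\alpha|+n+1+s}{2}\big)=\tfrac{2|\alpha|+n+1+s}{2}$, which silently replaces the true weight by a $j$-independent one; no Plancherel identity applies to the weighted sum, so the quantity you compute is not $\|u\|_{H^{s+1}(\H^{n+1})}^2$. Your flagged ``decisive'' identity is also not available: inserting the explicit coefficients \eqref{eq:coeff}--\eqref{eq:Lfunction} (with $m=1$), the integral $\int_0^\infty|\widetilde R_k(s,u)|^2\,du$ reduces to a double Beta-type integral
$$
\int_0^\infty\!\!\int_0^\infty x^{b-1}y^{b-1}(1+x)^{-c}(1+y)^{-c}(1+x+y)^{-2s-1}\,dx\,dy,
$$
a ${}_3F_2$ at unit argument with no Gauss-type closed form; its large-$k$ order is indeed $(2k)^{-1}$, but the exact value retains extra Gamma structure, so the clean proportionality to $(2k+n+1+s)^{-1}$ on which your cancellation hinges does not hold.

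The paper's proof shows how to dodge both problems, and the structural lesson is that one must apply the weight \emph{before} summing. Using Theorem \ref{thm:CH} with $m=1$, the solution is rewritten as $u=C_1(n,s)C_2(n,s)^{-1}\mathcal{L}_sf\ast\varphi_{-s,|\zeta|}$, and the key observation is that $\varphi_{-s,|\zeta|}(z,t)$, viewed as a function of $(z,\zeta,t)\in\H^{n+1}$, is precisely $|g|^{-2(n+2)+2(s+1)}$, a constant multiple of the fundamental solution of $\mathcal{L}_{s+1}$ on $\H^{n+1}$; hence its full group Fourier transform is the explicit Gamma-ratio function of $H_{n+1}(\lambda)$, and all diagonal scalars $\widehat u(\lambda)\Phi_{\alpha,j}^\lambda$ come out in closed form at a stroke, with no Laguerre-coefficient integrals in $\zeta$ at all. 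Then the $H^{s+1}$ weight cancels exactly one factor of the squared diagonal scalar, reducing the weighted $j$-sum to $\sum_j\Gamma(b+j)/\Gamma(c+j)$ with $c-b=1+s$ --- a ${}_2F_1(a,1;c;1)$ that Gauss's theorem evaluates to $\tfrac{\Gamma(s)}{\Gamma(1+s)}\Gamma\big(\tfrac{2|\alpha|+n+1-s}{2}\big)/\Gamma\big(\tfrac{2|\alpha|+n+1+s}{2}\big)$. Note the outcome is a ratio of Gammas whose arguments differ by $s$, not the single factor $(2|\alpha|+n+1+s)^{-1}$ your scheme requires: this residual $\alpha$-dependence is exactly what recombines with the square of the $\mathcal{L}_s$ multiplier to reproduce the $H^s(\H^n)$ weight. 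To repair your argument you would have to carry the $j$-dependent weight through the Weyl-transform computation, at which point you are forced back to summing the explicit diagonal entries, i.e.\ to the paper's route.
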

\begin{proof}
We begin with the observation that the (group) Fourier transform of a function $u$ on $\H^{n+1}$ is defined by
$$
\widehat{u}(\lambda)=\int_{\H^{n+1}}u(z,\zeta,t)\pi_{\lambda}(z,\zeta,t)\,dz\,d\zeta\,dt.
$$
Here $\pi_{\lambda}(z,\zeta,t)$ are the Schr\"odinger representations realised on $L^2(\R^{n+1})$. Since 
$$
(z,\zeta,t)=(0,\zeta,0)(z,0,t), \quad \pi_{\lambda}(z,\zeta,t)=\pi_{\lambda}(0,\zeta,0)\pi_{\lambda}(z,0,t)
$$
and hence
$$
\widehat{u}(\lambda)=\int_{\C}\pi_{\lambda}(0,\zeta,0)\Big(\int_{\H^{n}}u(z,\zeta,t)\pi_{\lambda}(z,0,t)\,dz\,dt\Big)\,d\zeta.
$$
Since the solution $u(z,\zeta,t)=P_sf(z,\zeta,t)$ is given by convolution of $f$ with a function, the inner integral simplifies. 

At this point we make use of Theorem \ref{thm:CH} with $m=1$, namely,
$$
\mathcal{L}_s\varphi_{-s,\rho}(z,t)=C_2(n,s)\rho^{2s}\varphi_{s,\rho}(z,t),
$$
where $C_2(n,s):=C_2(n,1,s)$. In view of this we can write the solution as 
$$
u(z,\zeta,t)=C_1(n,s)C_2(n,s)^{-1}\mathcal{L}_sf\ast \varphi_{-s,|\zeta|}(z,t).
$$
Thus
$$
\int_{\H^n}u(z,\zeta,t)\pi_{\lambda}(z,0,t)\,dz\,dt=C_1(n,s)C_2(n,s)^{-1}(\mathcal{L}_sf)^{\widehat{}}(\lambda)\int_{\H^n}\varphi_{-s,|\zeta|}(z,t)\pi_{\lambda}(z,0,t)\,dz\,dt,
$$
where $(\mathcal{L}_sf)^{\widehat{}}(\lambda)$ is the Fourier transform of $\mathcal{L}_sf$ on $\H^n$ and hence acts on $L^2(\R^n)$. From this we obtain 
\begin{equation}
\label{eq:uw}
\widehat{u}(\lambda)=C_1(n,s)C_2(n,s)^{-1}(\mathcal{L}_sf)^{\widehat{}}(\lambda)\int_{\H^{n+1}}\big((|z|^2+|\zeta|^2)^2+16t^2\big)^{-\frac{n+1-s}{2}}\pi_{\lambda}(z,\zeta,t)\,dz\,d\zeta\,dt.
\end{equation}
For $g=(z,\zeta,t)\in \H^n$, let $|g|=\big((|z|^2+|\zeta|^2)^2+16t^2\big)^{1/4}$ be the homogeneous norm. Then \eqref{eq:uw} can be rewritten as
\begin{equation}
\label{eq:uwr}
\widehat{u}(\lambda)=C_1(n,s)C_2(n,s)^{-1}(\mathcal{L}_sf)^{\widehat{}}(\lambda)\int_{\H^{n+1}}|g|^{-2(n+2)+2(s+1)}\pi_{\lambda}(g)\,dg.
\end{equation}
It is known, see \cite[p. 127]{RT}, that $C_{n+1,s+1}^{-1}|g|^{-2(n+2)+2(1+s)}$ is a fundamental solution of the operator $\mathcal{L}_{s+1}$ on $\H^{n+1}$, if we take
\begin{equation*}
C_{n,s}=\frac{\pi^{n+1}\Gamma(s)}{2^{n+1-3s}\Gamma\big(\frac{n+1-s}{2}\big)^2}.
\end{equation*}
Consequently,
$$
\int_{\H^{n+1}}|g|^{-2(n+2)+2(s+1)}\pi_{\lambda}(g)\,dg= C_{n+1,s+1}(2|\lambda|)^{-s-1}\frac{\Gamma\big(\frac{H_{n+1}(\lambda)}{2|\lambda|}+\frac{1-(1+s)}{2}\big)}{\Gamma\big(\frac{H_{n+1}(\lambda)}{2|\lambda|}+\frac{1+(1+s)}{2}\big)}.
$$

Therefore, in view of this and \eqref{eq:uwr} we see that 
$$
\widehat{u}(\lambda)\Phi_{\alpha,j}^{\lambda}(x,y)=C_{n+1,s+1}(2|\lambda|)^{-s-1}\frac{\Gamma\big(\frac{2|\alpha|+2j+(n+1)+1-(1+s)}{2}\big)}{\Gamma\big(\frac{2|\alpha|+2j+(n+1)+1+(1+s)}{2}\big)}(\mathcal{L}_sf)^{\widehat{}}(\lambda)\Phi_{\alpha,j}^{\lambda}(x,y).
$$
As $(\mathcal{L}_sf)^{\widehat{}}(\lambda)=\widehat{f}(\lambda)\frac{\Gamma\big(\frac{H_{n}(\lambda)}{2|\lambda|}+\frac{1+s)}{2}\big)}{\Gamma\big(\frac{H_{n}(\lambda)}{2|\lambda|}+\frac{1-s)}{2}\big)}(2|\lambda|)^{s}$ acts only on $\Phi_{\alpha}^{\lambda}$, we get
$$
\widehat{u}(\lambda)\Phi_{\alpha,j}^{\lambda}(x,y)=C_{n+1,s+1}^2|\lambda|^{-1}\frac{\Gamma\big(\frac{2|\alpha|+2j+(n+1)+1-(1+s)}{2}\big)}{\Gamma\big(\frac{2|\alpha|+2j+(n+1)+1+(1+s)}{2}\big)}\frac{\Gamma\big(\frac{2|\alpha|+n+1+s}{2}\big)}{\Gamma\big(\frac{2|\alpha|+n+1-s)}{2}\big)}\widehat{f}(\lambda)\Phi_{\alpha}^{\lambda}(x)h_j^{\lambda}(y),
$$
so that 
$$
\|\widehat{u}(\lambda)\Phi_{\alpha,j}^{\lambda}\|_2^2=C_{n+1,s+1}^2|\lambda|^{-2}\frac{\Gamma\big(\frac{2|\alpha|+2j+(n+1)+1-(1+s)}{2}\big)^2}{\Gamma\big(\frac{2|\alpha|+2j+(n+1)+1+(1+s)}{2}\big)^2}\frac{\Gamma\big(\frac{2|\alpha|+n+1+s}{2}\big)^2}{\Gamma\big(\frac{2|\alpha|+n+1-s)}{2}\big)^2}\|\widehat{f}(\lambda)\Phi_{\alpha}^{\lambda}\|_2^2.
$$
Recalling the expression for the norm of $u$ in $H^{s+1}(\H^{n+1})$ we have to multiply the above with
$$
\frac{\Gamma\big(\frac{2|\alpha|+2j+(n+1)+1+(1+s)}{2}\big)}{\Gamma\big(\frac{2|\alpha|+2j+(n+1)+1-(1+s)}{2}\big)}
$$
and sum over $\alpha$ and $j$:
$$
\sum_{j=0}^{\infty}\frac{\Gamma\big(\frac{2|\alpha|+2j+(n+1)+1+(1+s)}{2}\big)}{\Gamma\big(\frac{2|\alpha|+2j+(n+1)+1-(1+s)}{2}\big)}\frac{\Gamma\big(\frac{2|\alpha|+2j+(n+1)+1-(1+s)}{2}\big)^2}{\Gamma\big(\frac{2|\alpha|+2j+(n+1)+1+(1+s)}{2}\big)^2} =\sum_{j=0}^{\infty}\frac{\Gamma\big(\frac{2|\alpha|+2j+(n+1)+1-(1+s)}{2}\big)}{\Gamma\big(\frac{2|\alpha|+2j+(n+1)+1+(1+s)}{2}\big)}.
$$
The above can be computed using properties of hypergeometric functions. Recall that 
$$
F(a,b;c;z)=\sum_{s=0}^{\infty}\frac{(a)_s(b)_s}{(c)_ss!}z^s=\sum_{s=0}^{\infty}\frac{\Gamma(a+s)\Gamma(b+s)\Gamma(c)}{\Gamma(a)\Gamma(b)\Gamma(c+s)s!}z^s.
$$
With $a=\frac{2|\alpha|+n+1-s}{2}$, $b=1$, $c=\frac{2|\alpha|+n+3+s}{2}$ and $z=1$ we see that 
$$
\sum_{j=0}^{\infty}\frac{\Gamma\big(\frac{2|\alpha|+2j+(n+1)+1-(1+s)}{2}\big)}{\Gamma\big(\frac{2|\alpha|+2j+(n+1)+1+(1+s)}{2}\big)}
=\sum_{j=0}^{\infty}\frac{\Gamma\big(\frac{2|\alpha|+n+1-s}{2}+j\big)\Gamma(1+j)}{\Gamma\big(\frac{2|\alpha|+n+3+s}{2}+j\big)j!}=\frac{\Gamma(a)\Gamma(1)}{\Gamma(c)}F(a,1;c,1).
$$
Now making use of \cite[15.4.20]{OlMax}:
$$
F(a,b;c;1)=\frac{\Gamma(c)\Gamma(c-a-b)}{\Gamma(c-a)\Gamma(c-b)}, \qquad \Re(c-a-b)>0,
$$ 
we obtain 
$$
\sum_{j=0}^{\infty}\frac{\Gamma\big(\frac{2|\alpha|+2j+(n+1)+1-(1+s)}{2}\big)}{\Gamma\big(\frac{2|\alpha|+2j+(n+1)+1+(1+s)}{2}\big)}=\frac{\Gamma(s)}{\Gamma(1+s)}\frac{\Gamma\big(\frac{2|\alpha|+n+1-s}{2}\big)}{\Gamma\big(\frac{2|\alpha|+n+1+s}{2}\big)}.
$$
Consequently, 
\begin{multline*}
\sum_{\alpha\in \Na^n}\sum_{j=0}^{\infty}\frac{\Gamma\big(\frac{2|\alpha|+2j+(n+1)+1+(1+s)}{2}\big)}{\Gamma\big(\frac{2|\alpha|+2j+(n+1)+1-(1+s)}{2}\big)}\|\widehat{u}(\lambda)\Phi_{\alpha,j}^{\lambda}\|_2^2\\
=C_{n+1,s+1}^2\frac{\Gamma(s)}{\Gamma(1+s)}|\lambda|^{-2}\sum_{\alpha\in \Na^n}\frac{\Gamma\big(\frac{2|\alpha|+n+1+s)}{2}\big)}{\Gamma\big(\frac{2|\alpha|+n+1-s}{2}\big)}\|\widehat{f}(\lambda)\Phi_{\alpha}^{\lambda}\|_2^2.
\end{multline*}
Integrating the above with respect to $|\lambda|^{n+2+s}\,d\lambda$ we obtain
$$
\|u\|_{H^{s+1}(\H^{n+1})}^2=C_{n+1,s+1}^2\frac{\Gamma(s)}{\Gamma(1+s)}\|f\|_{H^{s}(\H^{n})}^2.
$$
\end{proof}

Now we are ready to prove Theorem \ref{thm:MOZgen}.
\begin{proof}[Proof of Theorem  \ref{thm:MOZgen}]
We begin with the observation that if $u$ is a solution of the extension problem for $\mathcal{L}$ on $N$, then for any $\omega\in \mathbb{S}^{m-1}$, $R_{\omega}u$ is a solution of the extension problem for $\mathcal{L}$ on $\H_{\omega}^n$. (Since all $\H_{\omega}^n$ are isomorphic to $\H^{n}$ we use the same notation $\mathcal{L}$ to denote these sublaplacians). This can be easily seen as follows: since $u(v,z,\rho)=C_1(n,m,s)\rho^{2s}f\ast \varphi_{s,\rho}(v,z)$, by taking Radon transform in the $z$-variable and recalling that $R_{\omega}(f\ast \varphi_{s,\rho})(v,t)=R_{\omega}f\ast_{\omega}R_{\omega}\varphi_{s,\rho}(v,t)$, where $\ast_{\omega}$ is the convolution on $\H_{\omega}^n$, we have
$$
R_{\omega}u(v,t,\rho)=C_1(n,m,s)\rho^{2s}R_{\omega}f\ast_{\omega}R_{\omega}\varphi_{s,\rho}(v,t).
$$
We have already calculated $R_{\omega}\varphi_{s,\rho}(v,t)$ in the proof of Theorem \ref{thm:Phi1}. It is given by $ R_\omega\varphi_{s,\rho}(v,t)=\frac{C_1(n,1,s)}{C_1(n,m,s)} \big((\rho^2+|v|^2)^2+16t^2\big)^{-\frac{n+1+s}{2}}$ which is considered as  a kernel on $\H_{\omega}^n\cong \H^n. $ Thus we have
$$
C_1(n,m,s)R_{\omega}\varphi_{s,\rho}(v,t)=C_1(n,1,s) \big((\rho^2+|v|^2)^2+16t^2\big)^{-\frac{n+1+s}{2}}
$$
and it is clear that $R_{\omega}u(v,t,\rho)$ solves the extension problem for $\mathcal{L}$ on $\H_{\omega}^n$ with initial condition $R_{\omega}f$. As $(-\Delta_z)$ commutes with $\mathcal{L}$, it follows that $(-\Delta_z)^{\frac{m-1}{4}}u=C_1(n,m,s)\rho^{2s}(-\Delta_z)^{\frac{m-1}{4}}f\ast \varphi_{s,\rho}$ and consequently, 
$$
R_{\omega}(-\Delta_z)^{\frac{m-1}{4}}u(v,t)=C_1(n,1,s)\rho^{2s}R_{\omega}(-\Delta_z)^{\frac{m-1}{4}}f\ast_{\omega}\widetilde{\varphi}_{s,\rho}(v,t),
$$
where $\widetilde{\varphi}_{s,\rho}(v,t)= \big((\rho^2+|v|^2)^2+16t^2\big)^{-\frac{n+1+s}{2}}$.
Now in view of Theorem \ref{thm:MOZ}, we have 
\begin{align*}
\|R_{\omega}\big((-\Delta_z)^{\frac{m-1}{4}}u\big)\|^2_{H^{s+1}(\H^{n+1})}=C \|R_{\omega}\big((-\Delta_z)^{\frac{m-1}{4}}f\big)\|^2_{H^{s}(\H^{n})}.
\end{align*}
Integrating the above over $\mathbb{S}^{m-1}$ we obtain
$$
\int_{\mathbb{S}^{m-1}}\|R_{\omega}\big((-\Delta_z)^{\frac{m-1}{4}}u\big)\|^2_{H^{s+1}(\H^{n+1})}\,d\sigma(\omega)=C\int_{\mathbb{S}^{m-1}} \|R_{\omega}\big((-\Delta_z)^{\frac{m-1}{4}}f\big)\|^2_{H^{s}(\H^{n})}\,d\sigma(\omega).
$$
Recalling the definition of the norm on $H^s(\H^n)$, the right hand side of the above reads
$$
\int_{\mathbb{S}^{m-1}}\int_{\H^n}|\mathcal{L}_s^{1/2}R_{\omega}\big((-\Delta_z)^{\frac{m-1}{4}}f\big)(v,t)|^2\,dv\,dt\,d\sigma(\omega).
$$
The proof will be completed by showing that the above integral is a constant multiple of $\int_N|\mathcal{L}_s^{1/2}f(v,z)|^2\,dv\,dz$. By Plancherel theorem for the Fourier transform on $\H^n$
\begin{multline*}
\int_{\H^n}|\mathcal{L}_s^{1/2}R_{\omega}\big((-\Delta_z)^{\frac{m-1}{4}}f\big)(v,t)|^2\,dv\,dt\\
=(2\pi)^{-n-1}\int_{-\infty}^{\infty}\Big(\sum_{\alpha\in \Na^n}\frac{\Gamma\big(\frac{2|\alpha|+n+1+s}{2}\big)}{\Gamma\big(\frac{2|\alpha|+n+1-s}{2}\big)}\|\pi_{\lambda}\big(R_{\omega}\big((-\Delta_z)^{\frac{m-1}{4}}f\big)\Phi_{\alpha}^{\lambda}\|_2^2\Big)|\lambda|^{n+s}\,d\lambda
\end{multline*}
where we have used $\pi_{\lambda}(F)$ to stand for the Fourier transform $\widehat{F}(\lambda)$ of a function $F$ on $\H^n$. But we note that for any $g$ on $N$, $\pi_{\lambda}(R_{\omega}g)=\pi_{\lambda,\omega}(g)$ (recall the notation from Subsection~\ref{subsec:reprH}). Using this we have
$$
\pi_{\lambda}(R_{\omega}(-\Delta_z)^{\frac{m-1}{4}}f)=\pi_{\lambda,\omega}((-\Delta_z)^{\frac{m-1}{4}}f)=|\lambda|^{\frac{m-1}{2}}\pi_{\lambda,\omega}(f).
$$
Therefore, 
$$
\int_{\H^n}|\mathcal{L}_s^{1/2}(R_{\omega}(-\Delta_z)^{\frac{m-1}{4}}f)(v,t)|^2\,dv\,dt=C\int_{-\infty}^{\infty}\sum_{\alpha\in \Na^n}\frac{\Gamma\big(\frac{2|\alpha|+n+1+s}{2}\big)}{\Gamma\big(\frac{2|\alpha|+n+1-s}{2}\big)}\|\pi_{\lambda,\omega}(f)\|_2^2|\lambda|^{n+m-1+s}\,d\lambda.
$$
Integrating both sides over $\mathbb{S}^{m-1}$ we immediately see that
\begin{multline*}
\int_{\mathbb{S}^{m-1}}\|R_{\omega}(-\Delta_z)^{\frac{m-1}{4}}u\|^2_{H^{s+1}(\H^{n+1})}\,d\sigma(\omega)\\
=C\int_{\mathbb{S}^{m-1}}\int_{-\infty}^{\infty}\|\pi_{\lambda,\omega}(\mathcal{L}_s^{1/2}f)\|_{\operatorname{HS}}^2|\lambda|^{n+m-1}\,d\lambda\,d\sigma(\omega),
\end{multline*}
and from here we conclude.
\end{proof}
\subsection{Higher order extension problem}

In this subsection we will deal with the extension problem for large values of $s>0$. By Theorem \ref{thm:sol} we have that, for $s>0$, 
$$
u(v,z,\rho)=\Big(\frac{\pi^{n+m/2}}{4^m}\frac{\Gamma(s)\Gamma(\frac{n+s}{2})}{\Gamma(n+s)\Gamma\big(\frac{n+m+s}{2}\big)}\Big)^{-1}\rho^{2s} f\ast \varphi_{s,\rho}(v,z)
$$ 
solves the extension problem and $\lim_{\rho\to 0}u(v,z,\rho)= f(v,z)$. We are interested in recovering $\mathcal{L}_sf$ as the limit of certain derivatives (in $\rho$) of $u(v,z,\rho)$. Observe that the operator $\mathcal{L}_s$ is defined for all values of $s\in \R\setminus D$, where $D=\{\pm(n+2k+1): k=0,1,2,\ldots\}=D^+\cup D^-$.  Higher order extension problems have been studied in \cite{CG, Y} for the  Euclidean Laplacian and $s\in (0,n/2)$, and for general non-negative self-adjoint linear operators defined in an $L^2$-space and any noninteger positive number $s$, in \cite{RS}.

The main result of this subsection reads as follows.
\begin{thm}
\label{thm:higher}
Let $N$ be a $H$-type group with homogeneous dimension $Q=2(n+m)$. Let $s>0$ be such that either $s\notin \Na$ if $m$ is even, or $s\notin D^+$ if $m$ is odd. Suppose that $\ell$ is the integer such that $\ell-1\le  s<\ell$. Let $u$ be the solution of the extension problem \eqref{eq:ep} with initial condition $f\in L^p(N)$, $1\le p<\infty$. Then
$$
\lim_{\rho\to 0}\rho^{2(\ell-s)}\big(\frac{1}{2\rho}\partial_{\rho}\big)^{\ell}u(v,z,\rho)=C(\ell, m,n,s)\mathcal{L}_sf(v,z),
$$
in the $L^p$ norm provided $\mathcal{L}_sf\in L^p(N)$, where $C(\ell, m,n,s)=C_1(n,m,s)C_2(n,m,s)^{-1}a(n,m,s)$ with $C_1(n,m,s)$ and $C_2(n,m,s)$ given, respectively, by \eqref{eq:C1} and \eqref{eq:C2}, and $a(n,m,s)$ by 
$$
a(n,m,s)=\frac{\pi^{n+m/2}}{4^m}\frac{\Gamma(\ell-s)}{\Gamma(n+\ell-s)}\sum_{j=0}^{\ell}c(\ell,j)\frac{\Gamma(\frac{n+\ell+j-s}{2})}{\Gamma\big(\frac{n+m+\ell+j-s}{2}\big)}.
$$
Here $c(\ell, j)$ satisfies the recurrence relation
\begin{align}
\label{eq:recu}
\notag c(\ell+1,j)&=(j+1)c(\ell,j+1)-\frac12(n+m+\ell+j-1-s)c(\ell,j-1), \quad 1\le j\le \ell,\\
c(\ell+1,0)&=c(\ell,1),\qquad c(\ell+1,\ell+1)=-\frac12(n+m+2\ell-s)c(\ell, \ell).
\end{align}
\end{thm}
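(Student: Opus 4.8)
The plan is to reduce the theorem to the analysis of a single convolution kernel and then recognise that kernel as a (signed) approximate identity. The starting point is the representation from Remark~\ref{rem:solution},
\[
u(v,z,\rho)=C_1(n,m,s)\,C_2(n,m,s)^{-1}\,\mathcal{L}_sf\ast\varphi_{-s,\rho}(v,z),
\]
which was deduced from the Cowling--Haagerup identity in Theorem~\ref{thm:CH}. Since the present statement allows $s$ to be large, I would first record that this identity, being an equality of Fourier multipliers built from ratios of Gamma functions applied to $\varphi_{-s,\rho}$, extends by the very definition of $\mathcal{L}_s$ to every $s$ outside the forbidden set; the hypotheses $s\notin\Na$ (for $m$ even) and $s\notin D^+$ (for $m$ odd) are imposed precisely so that $\mathcal{L}_s$ is defined and the Gamma quotients in $C_1(n,m,s)$, $C_2(n,m,s)$ and $a(n,m,s)$ are finite and nonvanishing. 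Because $\mathcal{L}_s$ is left invariant, the operator $(\tfrac{1}{2\rho}\partial_\rho)^\ell$ commutes with the convolution in $(v,z)$, so that
\[
\rho^{2(\ell-s)}\Big(\tfrac{1}{2\rho}\partial_\rho\Big)^\ell u=C_1(n,m,s)\,C_2(n,m,s)^{-1}\,\mathcal{L}_sf\ast\Big[\rho^{2(\ell-s)}\Big(\tfrac{1}{2\rho}\partial_\rho\Big)^\ell\varphi_{-s,\rho}\Big],
\]
and everything is reduced to understanding the bracketed kernel.

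Next I would compute $(\tfrac{1}{2\rho}\partial_\rho)^\ell\varphi_{-s,\rho}$ explicitly. Writing $w=\rho^2+|v|^2$ and passing to the variable $\sigma=\rho^2$, the operator $\tfrac{1}{2\rho}\partial_\rho$ becomes $\partial_\sigma$, which acts as $\partial_w$ on functions of $w$ and $|z|$. Since $\varphi_{-s,\rho}=(w^2+16|z|^2)^{-\frac{n+m-s}{2}}$ depends on $\rho$ only through $w$, an induction on $\ell$ gives
\[
\Big(\tfrac{1}{2\rho}\partial_\rho\Big)^\ell\varphi_{-s,\rho}(v,z)=\sum_{j=0}^{\ell}c(\ell,j)\,(\rho^2+|v|^2)^j\big((\rho^2+|v|^2)^2+16|z|^2\big)^{-\frac{n+m+\ell+j-s}{2}}.
\]
Each application of $\partial_w$ sends the $j$-th summand to a multiple of the $(j-1)$-index term (from $\partial_w w^j$) plus a multiple of the $(j+1)$-index term (from differentiating the power), and collecting the two contributions at level $\ell+1$ yields the two-term recurrence \eqref{eq:recu} with seed $c(0,0)=1$; the fact that $j$ and $\ell$ have the same parity shows the sum is supported on $0\le j\le\ell$. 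This step is purely algebraic and carries no analytic difficulty.

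With the kernel in hand I would exploit its homogeneity. The dilation $(v,z)\mapsto(\rho v,\rho^2 z)$ multiplies $\rho^2+|v|^2$ by $\rho^2$ and $(\rho^2+|v|^2)^2+16|z|^2$ by $\rho^4$, so the explicit formula shows that, with $G:=(\tfrac{1}{2\rho}\partial_\rho)^\ell\varphi_{-s,\rho}\big|_{\rho=1}$,
\[
\rho^{2(\ell-s)}\Big(\tfrac{1}{2\rho}\partial_\rho\Big)^\ell\varphi_{-s,\rho}(v,z)=\rho^{-2(n+m)}G(\rho^{-1}v,\rho^{-2}z).
\]
Each summand of $G$ is integrable on $N$ by Lemma~\ref{lem:I}, whose hypotheses $\alpha-j=\ell-s>0$ and $\alpha=\ell+j-s>-n$ hold because $\ell-1\le s<\ell$; applying the lemma term by term with $\alpha=\ell+j-s$ gives $\int_N G=a(n,m,s)$. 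Consequently $\rho^{-2(n+m)}G(\rho^{-1}\cdot,\rho^{-2}\cdot)$ is an $L^1$-mass-preserving dilation of the fixed integrable kernel $G$ with total mass $a(n,m,s)$, hence a (signed) approximate identity. By the standard approximate-identity estimate (Young's inequality together with density of smooth functions), convolution with it sends $\mathcal{L}_sf\in L^p(N)$ to $a(n,m,s)\,\mathcal{L}_sf$ in $L^p$, $1\le p<\infty$. Combining with the reduction above gives the limit $C_1(n,m,s)C_2(n,m,s)^{-1}a(n,m,s)\,\mathcal{L}_sf=C(\ell,m,n,s)\,\mathcal{L}_sf$.

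I expect the main obstacle to be the first paragraph rather than the computation: one must justify the representation $u=C_1C_2^{-1}\mathcal{L}_sf\ast\varphi_{-s,\rho}$ for the full admissible range of $s$ (extending Theorem~\ref{thm:CH} beyond $0<s<n+1$ through analytic continuation of the multiplier identity and of the Fourier transform of $\varphi_{\pm s,\rho}$), and verify that the excluded values of $s$ are exactly those at which a Gamma factor in $\mathcal{L}_s$, in $C_2(n,m,s)$, or in $a(n,m,s)$ develops a pole or a zero. Differentiating under the convolution sign is legitimate because $\varphi_{-s,\rho}$ and its $\rho$-derivatives are dominated, locally uniformly in $\rho>0$, by integrable functions. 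The remaining ingredients---the inductive recurrence and the mass computation via Lemma~\ref{lem:I}---are routine.
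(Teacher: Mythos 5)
Your kernel computation, the homogeneity argument, and the evaluation of the mass via Lemma \ref{lem:I} with $\alpha=\ell+j-s$ all coincide with the paper's proof, and your derivation of the recurrence (viewing $\frac{1}{2\rho}\partial_\rho$ as $\partial_w$ with $w=\rho^2+|v|^2$) is the same algebra the paper performs. The genuine gap sits exactly where you flag it, and your proposed fix does not repair it: you want to extend the representation $u=C_1(n,m,s)C_2(n,m,s)^{-1}\mathcal{L}_sf\ast\varphi_{-s,\rho}$ of Remark \ref{rem:solution} to every admissible $s$ "by the very definition of $\mathcal{L}_s$" (or by analytic continuation of the multiplier identity), and only then differentiate $\ell$ times. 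But for $s\ge n+m$ the kernel $\varphi_{-s,\rho}(v,z)=\big((\rho^2+|v|^2)^2+16|z|^2\big)^{-\frac{n+m-s}{2}}$ \emph{grows} at infinity, and already for $s$ beyond $n+1$ Theorem \ref{thm:CH} is only established in the range $0<s<n+1$; for generic $\mathcal{L}_sf\in L^p$ the convolution $\mathcal{L}_sf\ast\varphi_{-s,\rho}$ simply diverges. There is no identity to continue analytically at that level, because one side of it is not defined for large $s$.

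The paper resolves this by reversing the order of operations. It differentiates only within the safe range $0<s<n+1$, obtaining the identity \eqref{eq:iden},
\begin{equation*}
\rho^{2(\ell-s)}\Big(\frac{1}{2\rho}\partial_{\rho}\Big)^{\ell}u(v,z,\rho)=C_1(n,m,s)C_2(n,m,s)^{-1}\sum_{j=0}^{\ell}\mathcal{L}_sf\ast h_{j,\rho,s}(v,z),
\end{equation*}
whose right-hand kernels are built from $\varphi_{\ell-s,\rho}$ with $0<\ell-s\le 1$ and are therefore integrable for \emph{every} admissible $s$; it is this well-defined identity that gets continued in $s$. Concretely, since all kernels are radial in both $v$ and $z$, taking the group Fourier transform reduces both sides to scalar Laguerre coefficients, and the explicit $L$-function formula \eqref{eq:coeff}--\eqref{eq:Lfunction} exhibits the left-hand side as holomorphic in $s$ on $\Re s>0$ and the right-hand side as holomorphic on $\{0<\Re s<\ell\}$ off the exceptional set; agreement on $0<\Re s<n+1$ then propagates to all $0<s<\ell$. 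So your instinct that analytic continuation is the crux is correct, but the continuation must be applied \emph{after} differentiating and after the reduction to Fourier coefficients -- a verification that your sketch replaces with an appeal to the definition of $\mathcal{L}_s$, which by itself does not show that the two holomorphic families of coefficients coincide. With that restructuring, the remaining steps you give (the inductive formula for the kernel, the dilation identity, and the $L^p$ approximate-identity limit) are exactly the paper's.
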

\begin{proof}
We can write the solution (see Remark \ref{rem:solution}) as 
$$
u(v,z,\rho)=C_1(n,m,s)C_2(n,m,s)^{-1}\mathcal{L}_sf\ast \varphi_{-s,\rho}(v,z),
$$
provided $0<s<n+1$.
By induction, still assuming $0<s<n+1$, we can show that
\begin{equation}
\label{eq:1h}
\big(\frac{1}{2\rho}\partial_{\rho}\big)^{\ell}\varphi_{-s,\rho}(v,z)=\sum_{j=0}^{\ell}c(\ell,j)g_{j,\rho}(v,z)\varphi_{\ell-s,\rho}(v,z),
\end{equation}
for some explicit constants $c(\ell,j)$ that will be computed at the end of the proof and 
$$
g_{j,\ell}(v,z)=\frac{\big(\rho^2+|v|^2\big)^{j}}{\big((\rho^2+|v|^2)^2+16|z|^2\big)^{j/2}}.
$$
This shows that
\begin{equation}
\label{eq:iden}
\rho^{2(\ell-s)}\big(\frac{1}{2\rho}\partial_{\rho}\big)^{\ell}u(v,z,\rho)=C_1(n,m,s)C_2(n,m,s)^{-1}\sum_{j=0}^\ell \mathcal{L}_sf\ast  h_{j,\rho,s}(v,z),
\end{equation}
where
$$
h_{j,\rho,s}(v,z)=c(\ell,j)g_{j,\rho}(v,z)\rho^{2(\ell-s)}\varphi_{\ell-s,\rho}(v,z).
$$
We claim that the latter identity is valid for any $0<s\notin \Na$ or $s\notin D^+$ as the case may be. This restriction comes from the fact that $C_2(n,m,s)$ is not defined when $s$ is of the form $n+m+2k$, for a nonnegative integer $k$. In order to prove the claim, we use Fourier transform on $N$. As $u(v,z,\rho)=C_1(n,m,s)\rho^{2s}f\ast \varphi_{s,\rho}(v,z),$ by Theorem \ref{thm:CH}, it is enough to show that
$$
\pi_{\lambda, \omega}(f) \rho^{2(\ell-s)}\big(\frac{1}{2\rho}\partial_{\rho}\big)^{\ell}[\rho^{2s}\pi_{\lambda, \omega}(\varphi_{{s,\rho}})]=C_2(n,m,s)^{-1}\pi_{\lambda, \omega}(\mathcal{L}_sf)\sum_{j=0}^{\ell}\pi_{\lambda, \omega}(h_{j,\rho,s})
$$
for any $\lambda>0$ and $\omega\in \mathbb{S}^{\ell-1}$.
Since $\varphi_{s,\rho}(v,z)$ is radial in both $v$ and $z$, the group Fourier transform of $ \varphi_{s,\rho} $ is given by 
$$ \pi_{\lambda,\omega}(\varphi_{s,\rho}) =    \sum_{k =0}^\infty  c_{k,\rho}^\lambda(s) P_k(\lambda),
$$ 
see Subsection \ref{subsec:reprH}. Here the coefficients have been computed in \cite[Section 3]{RT} and are given by the formula:
\begin{equation}
\label{eq:coeff}
c_{k,\rho}^\lambda(s)  =  \frac{ (2\pi)^{n+1}  |\lambda|^s}{\Gamma\big(\frac12(n+1+s)\big)^2}  L\Big(\frac{\rho^2 |\lambda|}{4 }, \frac{2k+n+1+s}{2},\frac{2k+n+1-s}{2}\Big),
\end{equation}
where the $L$-function is
\begin{equation}
\label{eq:Lfunction}
L(a,b,c) = \int_{0}^\infty e^{-a(2x+1)} x^{b-1}\big(1+x\big)^{-c} dx,
\end{equation}
which is valid for $ a, b \in \R^+ $ and $ c \in \R $. Using the facts that $\varphi_{s,\rho}$ and $h_{j,\rho,s}$ are radial and
$$
 \pi_{\lambda,\omega}(\mathcal{L}_sf)= \pi_{\lambda,\omega}(f)(2|\lambda|)^s\frac{\Gamma\big(\frac{H(\lambda)}{2|\lambda|}+\frac{1+s}{2}\big)}{\Gamma\big(\frac{H(\lambda)}{2|\lambda|}+\frac{1-s}{2}\big)},
$$
we only need to show that
\begin{equation}
\label{eq:fside}
\rho^{2(\ell-s)}\big(\frac{1}{2\rho}\partial_{\rho}\big)^{\ell}[\rho^{2s}c_{k,\rho}^{\lambda}(s)]=C_2(n,m,s)^{-1}(2|\lambda|)^s\frac{\Gamma\big(\frac{2k+n+1+s}{2}\big)}{\Gamma\big(\frac{2k+n+1-s}{2}\big)}\sum_{j=0}^{\ell}\widehat{h}_{j,\rho,s}(\lambda,k)
\end{equation}
for every $k\in \Na$. Here,
$$
\widehat{h}_{j,\rho,s}(\lambda,k)=c_n\frac{k!(n-1)!}{(k+n-1)!}\int_{\C^n}h_{j,\rho,s}^{\lambda\omega}(v)\varphi_k^{\lambda}(v)\,dv
$$
(observe that $h_{j,\rho,s}^{\lambda\omega}(v)$ is independent of $\omega$ as $h_{j,\rho,s}(v,z)$ is radial in $z$).

From the definition of the $L$-function it is clear that $\rho^{2(\ell-s)}\big(\frac{1}{2\rho}\partial_{\rho}\big)^{\ell}[\rho^{2s}c_{k,\rho}^{\lambda}(s)]$ is a holomorphic function of $s$ on $\Re s>0$. On the other hand, it is easy to see that the right hand side of \eqref{eq:fside} is also holomorphic on $\{0<\Re s<\ell\}\cap(\C\setminus \Na)$ or $\{0<\Re s<\ell\}\cap(\C\setminus D^+)$ as the case may be. As both sides agree on $0<\Re s<n+1$, we can conclude that they agree on $0<s<\ell$. This proves the claim. 

Thus, we have proved \eqref{eq:iden} for $0<s<\ell$. As $h_{j,\rho,s}(v,z)=\rho^{-2(n+m)}h_{j,1,s}(\rho^{-1}v,\rho^{-2}z)$ and $h_{j,1,s}\in L^1(N)$, we see that
$$
\lim_{\rho\to0}\rho^{2(\ell-s)}\big(\frac{1}{2\rho}\partial_{\rho}\big)^{\ell}u(v,z,\rho)=C_1(n,m,s)C_2(n,m,s)^{-1}a(n,m,s)\mathcal{L}_sf,
$$
where, by Lemma \ref{lem:I} with $\alpha=\ell+j-s$,
\begin{align*}
\notag a(n,m,s)&=\sum_{j=0}^{\ell}c(\ell,j)\int_N(1+|v|^2)^{j}\big((1+|v|^2)^2+16|z|^2\big)^{-\frac{n+m+\ell+j-s}{2}}\,dv\,dz\\
&=\frac{\pi^{n+m/2}}{4^m}\frac{\Gamma(\ell-s)}{\Gamma(n+\ell-s)}\sum_{j=0}^{\ell}c(\ell,j)\frac{\Gamma(\frac{n+\ell+j-s}{2})}{\Gamma\big(\frac{n+m+\ell+j-s}{2}\big)}.
\end{align*}
Finally, we can get a recurrence relation for the constants $c(\ell,j)$. By rewriting \eqref{eq:1h}, we have
\begin{equation}
\label{eq:rew}
\big(\frac{1}{2\rho}\partial_{\rho}\big)^{\ell}\varphi_{-s,\rho}(v,z)=\sum_{j=0}^{\ell}c(\ell,j)(\rho^2+|v|^2)^{j}\big((\rho^2+|v|^2)^2+|z|^2\big)^{-\frac{n+m+\ell+j-s}{2}}.
\end{equation}
Differentiating the above once more, we get
\begin{multline*}
\big(\frac{1}{2\rho}\partial_{\rho}\big)^{\ell+1}\varphi_{-s,\rho}(v,z)=\sum_{j=1}^{\ell}c(\ell,j)j(\rho^2+|v|^2)^{j-1}\big((\rho^2+|v|^2)^2+|z|^2\big)^{-\frac{n+m+\ell+1+j-1-s}{2}}\\
-\frac12\sum_{j=0}^{\ell}c(\ell,j)(n+m+\ell+j-s)(\rho^2+|v|^2)^{j+1}\big((\rho^2+|v|^2)^2+|z|^2\big)^{-\frac{n+m+\ell+1+j+1-s}{2}}.
\end{multline*}
Comparing this with \eqref{eq:rew} we obtain the recurrence relation in \eqref{eq:recu}.
This completes the proof.
\end{proof}

\section{Trace Hardy and Hardy inequalities for  the sublaplacian}
\label{sec:Hardy}

Our aim in this section is to prove various forms of trace Hardy and Hardy inequalities for  $ \mathcal{L} $ on a $H$-type group $ N.$ 
Let us recall that $\nabla u=(X_1u, \ldots, X_{2n}u,  \frac12Z_1u, \ldots, \frac12Z_mu,\partial_\rho u)$ and let
\begin{equation}
\label{eq:opera}
\mathbb{L}:= -\mathcal{L}+\partial_\rho^2 +\frac{1-2s}{\rho}\partial_\rho +\frac{1}{4}\rho^2 \Delta_z
\end{equation}
stand for the extension operator. We have the following general trace Hardy inequality. In order to state the result, we recall the Sobolev space presented in the Introduction.  Let $\widetilde{W}_0^{s,2}(S)$ be the completion of $C_0^{\infty}(N\times \R)$ with respect to the norm 
$$
\|u\|_{(s)}^2=\int_0^{\infty}\int_N|\nabla u(v,z,\rho)|^2\rho^{1-2s}\,dv\,dz\,d\rho.$$ As it was noted in the Introduction, it can be easily checked that this is indeed a norm.

\begin{thm}[General trace Hardy inequality]
\label{thm:ThfHardyi}
Let $0<s<1$ and let $ \varphi \in L^2(N) $ be a real valued function in the domain of $\mathcal{L}_s$ such that $ \varphi^{-1} \mathcal{L}_s\varphi $ is locally integrable. Then for any  real valued function $u(v,z, \rho) \in\widetilde{W}_0^{s,2}(S)$ we have the inequality
$$ \int_0^\infty \int_{N}\Big| \N u(v,z, \rho)\Big|^2 \rho^{1-2s}\,dv\,dz\, d\rho\geq  2^{1-2s}\frac{\Gamma(1-s)}{\Gamma(s)}\int_{N} u^2(v,z,0) \frac{\mathcal{L}_s\varphi(v,z)}{\varphi(v,z)} \,dv\,dz.$$

\end{thm}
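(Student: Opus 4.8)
The plan is to establish an exact \emph{ground state representation} for the weighted Dirichlet energy and then discard a manifestly nonnegative term. It suffices to prove the identity for $u\in C_0^\infty(N\times\R)$ and then pass to $\widetilde{W}_0^{s,2}(S)$ by density; one also arranges $\varphi>0$ (the comparison functions that matter, such as $\varphi_{-s,\delta}$, are positive), so that the solution $\psi$ of the extension problem with initial data $\varphi$ is smooth and strictly positive on $N\times(0,\infty)$. By Theorem~\ref{thm:sol} such a $\psi$ exists, solves $\mathbb{L}\psi=0$ with $\psi(\cdot,0)=\varphi$, and obeys the Dirichlet-to-Neumann relation
$$-\lim_{\rho\to0}\rho^{1-2s}\partial_\rho\psi=2^{1-2s}\frac{\Gamma(1-s)}{\Gamma(s)}\mathcal{L}_s\varphi$$
from \eqref{eq:limits}. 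The goal is the identity
$$\int_0^\infty\!\!\int_N|\nabla u|^2\rho^{1-2s}\,dv\,dz\,d\rho=\int_0^\infty\!\!\int_N\psi^2\Big|\nabla\Big(\tfrac{u}{\psi}\Big)\Big|^2\rho^{1-2s}\,dv\,dz\,d\rho+2^{1-2s}\frac{\Gamma(1-s)}{\Gamma(s)}\int_N u^2(v,z,0)\frac{\mathcal{L}_s\varphi}{\varphi}\,dv\,dz,$$
from which the inequality is immediate.

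The core computation is the substitution $w=u/\psi$. Writing $u=\psi w$ gives
$$|\nabla u|^2=w^2|\nabla\psi|^2+\psi^2|\nabla w|^2+\psi\,\nabla\psi\cdot\nabla(w^2),$$
so after multiplying by $\rho^{1-2s}$ and integrating it remains only to treat the first and third terms together. I would integrate by parts in the $(v,z)$ variables, where the fields $X_j$ and the central derivatives are skew-adjoint with respect to Haar measure and produce no boundary contribution, and in $\rho$ against the weight $\rho^{1-2s}$. Upon collecting terms, the coefficient of $w^2$ in the bulk recombines, by the definition \eqref{eq:opera} of the extension operator, into $-\rho^{1-2s}\psi\,\mathbb{L}\psi=0$, since $\psi$ solves the extension equation. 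Hence every interior contribution cancels and only the boundary term from the $\rho$-integration by parts survives:
$$\int_0^\infty\!\!\int_N\big(w^2|\nabla\psi|^2+\psi\,\nabla\psi\cdot\nabla(w^2)\big)\rho^{1-2s}\,dv\,dz\,d\rho=-\int_N w^2(v,z,0)\,\varphi(v,z)\lim_{\rho\to0}\big(\rho^{1-2s}\partial_\rho\psi\big)\,dv\,dz.$$
The term at $\rho=\infty$ drops out because $u$, hence $w$, is compactly supported.

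Inserting the Dirichlet-to-Neumann limit and using $w(\cdot,0)=u(\cdot,0)/\varphi$, so that $w^2(\cdot,0)\,\varphi=u^2(\cdot,0)/\varphi$, the right-hand side becomes exactly $2^{1-2s}\tfrac{\Gamma(1-s)}{\Gamma(s)}\int_N u^2(v,z,0)\,\tfrac{\mathcal{L}_s\varphi}{\varphi}\,dv\,dz$. Combined with the nonnegative leftover $\int\psi^2|\nabla(u/\psi)|^2\rho^{1-2s}$, this yields the displayed ground state identity, and dropping that term gives the asserted inequality; equality holds precisely when $\nabla(u/\psi)\equiv0$, i.e. when $u$ is a scalar multiple of the extension of $\varphi$.

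The main obstacle I anticipate is the rigorous justification of the two integrations by parts and the identification of the boundary term. For $u\in C_0^\infty$ and $\varphi>0$ the quotient $w=u/\psi$ is smooth for $\rho>0$ and compactly supported, so the $(v,z)$ integrations and the vanishing at $\rho=\infty$ are routine; the delicate point is the limit as $\rho\to0$, which is governed precisely by \eqref{eq:limits} and requires $\varphi$ to lie in the domain of $\mathcal{L}_s$ with $\varphi^{-1}\mathcal{L}_s\varphi$ locally integrable, exactly so that the trace integral on the right converges. Finally, density of $C_0^\infty(N\times\R)$ in $\widetilde{W}_0^{s,2}(S)$ propagates the inequality to the whole space. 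The separate issue of whether the extremiser $\psi$ itself belongs to $\widetilde{W}_0^{s,2}(S)$, needed for the sharpness claim in Theorem~\ref{thm:GtHi}, is not required here and is handled by Theorem~\ref{thm:Ws2N}.
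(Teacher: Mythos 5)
Your proposal is correct and follows essentially the same route as the paper: your ground state substitution $u=\psi w$ with the expansion $|\nabla(\psi w)|^2=w^2|\nabla\psi|^2+\psi^2|\nabla w|^2+\psi\,\nabla\psi\cdot\nabla(w^2)$ is algebraically the same decomposition the paper obtains by expanding $\big|\nabla u-\tfrac{u}{w}\nabla w\big|^2$, and both arguments then integrate by parts, use $\mathbb{L}(\text{solution})=0$ to kill the bulk term, and identify the surviving $\rho=0$ boundary term via the Dirichlet-to-Neumann limit \eqref{eq:limits}. Your handling of density, the positivity of the extension, and the deferral of the extremiser's membership in $\widetilde{W}_0^{s,2}(S)$ to Theorem \ref{thm:Ws2N} all match the paper's treatment.
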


\begin{proof}[Proof of Theorem \ref{thm:ThfHardyi}]
Let $Y_i$ be any of the vector fields $X_j$ or $\frac12Z_k$ on the $N$ group.  An easy limiting argument shows that it is enough to prove the inequality for functions $ u $  which are restrictions to $ AN $ of a $ C_0^\infty $ function on $ N \times \R.$ Let $u(v,z,\rho)$ be such a function and take  $w(v,z,\rho)$ to be the solution of the extension problem with initial condition $ \varphi.$  Now consider the integral
$$\int_{N}\big(Y_iu-\frac{u}{w}Y_iw\big)^2 \,dv\,dz= \int_{N}\Big((Y_i u)^2-2\frac{u}{w}Y_iu Y_iw+\big(\frac{u}{w}Y_iw\big)^2\Big)\,dv\,dz.$$
Integrating by parts and noting that there are no boundary terms we get
\begin{align*}
\int_{N}\frac{u}{w}Y_iu Y_iw\,dv\,dz&=-\int_{N} u Y_i\big(\frac{u}{w}Y_iw \big)\,dv\,dz\\
&=-\int_{N}\frac{u}{w}Y_iuY_iw\,dv\,dz-\int_{N}u^2Y_i\big(\frac{1}{w}Y_iw \big)\,dv\,dz.
\end{align*}
Since $\int_{N}u^2Y_i\big(\frac{1}{w}Y_iw \big)\,dv\,dz=-\int_{N}\frac{u^2}{w^2}\big(Y_iw\big)^2\,dv\,dz+\int_{N}\frac{u^2}{w}Y^2_iw\,dv\,dz$, the above gives
$$
\int_{N}\Big(\frac{u^2}{w^2}(Y_i w)^2-2\frac{u}{w}Y_iu Y_iw \Big)\,dv\,dz=\int_{N}\frac{u^2}{w}Y_i^2 w\,dv\,dz.
$$
On the other hand, a similar calculation with the $\rho$-derivative gives
\begin{multline*}
\int_0^{\infty}\big(\frac{u^2}{w^2}(\partial_{\rho} w)^2-2\frac{u}{w}\pa_{\rho} u\pa_{\rho} w\big)\rho^{1-2s}\,d\rho =\int_0^{\infty}\frac{u^2}{w}\pa_\rho\big( \rho^{1-2s}\pa_\rho w\big)\,d\rho\\+\frac{u(v,z, 0)^2}{w(v,z, 0)}\,\lim_{\rho \rightarrow 0}\big(\rho^{1-2s}\pa_{\rho} w\big)(v,z, \rho).
\end{multline*}
Let us now add them up and  take all integrations into account. By denoting $x=(v,z)$ and $dx=dv\,dz$, we get, in view of \eqref{eq:opera}, 
\begin{multline}
\label{eq:key}
\int_0^\infty \int_{N}\Big| \N u(x,\rho) -\frac{u(x,\rho)}{w(x,\rho)} \N w(x,\rho)\Big|^2 \rho^{1-2s}\,dx\, d\rho=\int_0^\infty \int_{N}\Big| \N u(x,\rho)\Big|^2 \rho^{1-2s}\,dx\, d\rho\\+\int_0^\infty \int_{N} \frac{u(x,\rho)^2}{w(x,\rho)}\,(\mathbb{L}w(x,\rho)) \rho^{1-2s}\,dx\, d\rho+\int_{N} \frac{u(x,0)^2}{w(x,0)}\lim_{\rho \rightarrow 0}\rho^{1-2s} \pa_\rho w(x, \rho)\,dx.
\end{multline} 

Since we have taken $ w=C_1(n,m,s)\rho^{2s} \varphi\ast \varphi_{s,\rho} $, with $\varphi_{s,\rho}$ as in \eqref{eq:varfisr} and $C_1(n,m,s)$ as in \eqref{eq:C1},  $w$ solves the equation $\mathbb{L}w=0$, with $w(x,0)= \varphi(x).$ Moreover, as $\rho\to 0$,
$$ 
-\rho^{1-2s} \partial_\rho (C_1(n,m,s)\rho^{2s} \varphi\ast \varphi_{s,\rho}) \rightarrow 2^{1-2s}\frac{\Gamma(1-s)}{\Gamma(s)}\mathcal{L}_s  \varphi. 
$$ 
Therefore, \eqref{eq:key} simplifies and we obtain the inequality
$$
\int_0^\infty\int_{N}\Big| \N u(x, \rho)\Big|^2 \rho^{1-2s}\,dx d\rho\geq 2^{1-2s}\frac{ \Gamma(1-s)}{\Gamma(s)}\int_{N} u^2(x,0)\frac{\mathcal{L}_s \varphi(x)}{ \varphi(x)}\,dx,
$$ 
as desired.
\end{proof}

\begin{rem} By taking $\varphi(v,z)=u(v,z,0)$, we obtain the following inequality
$$ \int_0^\infty \int_{N}| \N u(v,z, \rho)|^2 \rho^{1-2s}\,dv\,dz\, d\rho\geq  C \int_{N} \mathcal{L}_s\varphi(v,z) \varphi(v,z) \,dv\,dz.$$
This has been already proved in \cite{FGMT} by using results from scattering theory.
\end{rem}

Theorem \ref{thm:ThfHardyi} proves the main part of Theorem \ref{thm:GtHi}. In order to show that the inequality is sharp we claimed that equality is attained when $ u $ is a solution of the extension problem with initial condition $ \varphi$. If only we know that the solution $ u $ belongs to the space $ \widetilde{W}_0^{s,2}(S)$, this will be easily seen by checking that both sides of the inequality reduces to $ (\varphi, \mathcal{L}_s\varphi)$. Thus we need the following result, which is in part the counterpart of Proposition~\ref{prop:Ws2} for $H$-type groups.

\begin{thm} 
\label{thm:Ws2N}
For $ 0 < s < 1, $ let $ \varphi $ be a real valued function in $  H^s(N), $ i.e. $ \varphi$, $\mathcal{L}_{s/2} \varphi \in L^2(N)$.  If $ u $ is the solution of the extension problem with initial condition $ \varphi,$ then 
  $ u \in \widetilde{W}_0^{s,2}(S)$. 
\end{thm}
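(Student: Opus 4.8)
The plan is to follow the Euclidean template of Proposition~\ref{prop:Ws2}: I will first show that the solution $u$ already has finite energy, $\|u\|_{(s)}^2=\int_0^\infty\int_N|\nabla u|^2\rho^{1-2s}\,dv\,dz\,d\rho<\infty$, and then produce a sequence in $C_0^\infty(N\times\R)$ converging to $u$ in this norm. Throughout I write $u=C_1(n,m,s)\rho^{2s}\varphi\ast\varphi_{s,\rho}$ for the solution with datum $\varphi$ and extend it to an even function of $\rho$ on $N\times\R$.

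For the finiteness of the energy I would specialize the integration-by-parts identity \eqref{eq:key} from the proof of Theorem~\ref{thm:ThfHardyi} to the case where $u=w$ is itself the solution with initial datum $\varphi$. Then $\mathbb{L}w=0$, the left-hand side of \eqref{eq:key} vanishes, and using the Neumann limit \eqref{eq:limits}, namely $\lim_{\rho\to0}\rho^{1-2s}\partial_\rho u=-2^{1-2s}\frac{\Gamma(1-s)}{\Gamma(s)}\mathcal{L}_s\varphi$, the identity collapses to
$$\|u\|_{(s)}^2=2^{1-2s}\frac{\Gamma(1-s)}{\Gamma(s)}\int_N\varphi\,\mathcal{L}_s\varphi\,dv\,dz=C\|\varphi\|_{H^s(N)}^2<\infty,$$
which is the $H$-type analogue of Proposition~\ref{prop:1}. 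The point needing justification is that \eqref{eq:key} was derived for compactly supported $u$; to apply it to the genuine solution I must rule out boundary contributions at infinity. This follows because $u$, and likewise each of $X_ju$, $Z_ku$ and $\partial_\rho u$, is the convolution of $\varphi\in L^2(N)$ with an $L^2(N)$ kernel (the kernel $\rho^{2s}\varphi_{s,\rho}$ and its first derivatives lie in $L^2(N)$ for $0<s<1$ by Lemma~\ref{lem:I}), hence is uniformly continuous and vanishes at infinity.

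For the approximation I would fix $\eta\in C_0^\infty(\R)$ with $\eta\equiv1$ on $[-1,1]$ and $\operatorname{supp}\eta\subset[-2,2]$, set $\psi_j=\eta\big(2^{-j}(\rho^2+|v|^2+|z|^2)\big)$ and $u_j=\psi_j u$, and decompose $\nabla(u-u_j)=(1-\psi_j)\nabla u+u\,\nabla\psi_j$. The first term tends to $0$ in $\|\cdot\|_{(s)}$ since it is the tail of the now-finite integral $\|u\|_{(s)}^2$. For the second term one checks that $\nabla\psi_j$ is uniformly bounded on its support: the isotropic derivatives obey $|\partial_\rho\psi_j|,|\partial_{v}\psi_j|\lesssim2^{-j/2}$, while the cross terms coming from the anisotropic vector fields, of the type $(v\text{-linear})\partial_z\psi_j$, are $\lesssim|v|\,|z|\,2^{-j}\lesssim1$ on the shell $\rho^2+|v|^2+|z|^2\sim2^j$. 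Consequently
$$\int_0^\infty\int_N|\nabla\psi_j|^2 u^2\rho^{1-2s}\,dv\,dz\,d\rho\le C\int_{\rho^2+|v|^2+|z|^2\ge 2^j}u^2\rho^{1-2s}\,dv\,dz\,d\rho,$$
which tends to $0$ provided the auxiliary estimate $\int_0^\infty\int_N u^2\rho^{1-2s}\,dv\,dz\,d\rho<\infty$ holds. This bound, in the spirit of Proposition~\ref{prop:2}, I would establish on the Fourier transform side: since $\varphi_{s,\rho}$ is radial, $\pi_{\lambda,\omega}(u)=\big(C_1\rho^{2s}\sum_k c_{k,\rho}^\lambda(s)P_k(\lambda)\big)\pi_{\lambda,\omega}(\varphi)$ with the coefficients $c_{k,\rho}^\lambda(s)$ of \eqref{eq:coeff}; by Plancherel, the scaling of the $L$-function \eqref{eq:Lfunction} in the combined variable $\rho^2\lambda$, and its asymptotics (of order $a^{-s}$ as $a\to0$ and exponentially decaying as $a\to\infty$), the $\rho$-integral of $|c_{k,\rho}^\lambda(s)|^2\rho^{2s+1}$ converges and reduces the whole expression to a convergent $k$-sum and $\lambda$-integral controlled by $\varphi$. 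Passing through the Radon transform to the groups $\H^n_\omega$ (Subsection~\ref{subsec:reprH}) is convenient here, since \eqref{eq:coeff} is stated for the Heisenberg group. A standard mollification then upgrades each compactly supported $u_j$ to a $C_0^\infty(N\times\R)$ function, so that $u\in\widetilde{W}_0^{s,2}(S)$.

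The main obstacle is the auxiliary weighted estimate $\int_0^\infty\int_N u^2\rho^{1-2s}<\infty$. In the Euclidean setting the corresponding multiplier $G(\rho|\xi|)$ is an explicit scalar with classical Bessel asymptotics; here the multiplier is operator-valued and assembled from the $L$-function, so the work lies in extracting, uniformly in the spectral parameters $(\lambda,k)$, both the small-$\rho$ behavior (which governs convergence near $\rho=0$, using $s<1$) and the fast decay as $\rho\to\infty$, and in checking that the resulting $k$-sum and $\lambda$-integral remain finite under the hypotheses $\varphi,\mathcal{L}_{s/2}\varphi\in L^2(N)$. The anisotropy of $N$ is a secondary complication, entering only through the $O(1)$ rather than $o(1)$ bound for the cross terms of $\nabla\psi_j$, which is nonetheless enough for the tail argument above.
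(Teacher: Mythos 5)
Your first half is fine and coincides with the paper's argument: Proposition~\ref{prop:equ} is proved there exactly as you propose, by specialising the integration-by-parts computation behind \eqref{eq:key} to the case $u=w$, justifying the absence of boundary terms by the fact that $\rho^{2s}\varphi_{s,\rho}$ and its first derivatives lie in $L^2(N)$, and using the Neumann limit to obtain $\|u\|_{(s)}^2=2^{1-2s}\frac{\Gamma(1-s)}{\Gamma(s)}\int_N\varphi\,\mathcal{L}_s\varphi\,dv\,dz$. The genuine gap is in your approximation step. Because your cutoff $\psi_j=\eta\big(2^{-j}(\rho^2+|v|^2+|z|^2)\big)$ is isotropic, you only get $|\nabla\psi_j|=O(1)$ on the shell (the anisotropic cross terms $2^{-j}|v||z|$ really are just bounded), and you are then forced to invoke the global estimate $\int_0^\infty\int_N u^2\rho^{1-2s}\,dv\,dz\,d\rho<\infty$ — which you flag as ``the main obstacle'' but do not prove, and which is in fact \emph{false} under the hypotheses $\varphi,\,\mathcal{L}_{s/2}\varphi\in L^2(N)$. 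Spectrally: writing $m_k(\lambda,\rho)$ for the normalised multiplier of $\varphi\mapsto u(\cdot,\rho)$ built from \eqref{eq:coeff}--\eqref{eq:Lfunction} (so $m_k\to1$ as $\rho\to0$ and $m_k$ is exponentially small once $\rho^2\lambda$ is large), the substitution $t=\rho^2\lambda$ gives $\int_0^\infty m_k(\lambda,\rho)^2\rho^{1-2s}\,d\rho=\tfrac12\lambda^{s-1}\int_0^\infty M_k(t)^2t^{-s}\,dt$, a finite constant times $\lambda^{s-1}$. Since $0<s<1$, this weight blows up as $\lambda\to0^+$, whereas the norms at your disposal carry the weights $1$ (for $L^2$) and roughly $((2k+n)\lambda)^s$ (for $H^s$), both bounded near $\lambda=0$; a datum whose Plancherel mass concentrates at small $\lambda$ (e.g.\ radial, spectrum on $k=0$, with $\|\widehat\varphi(\lambda)P_0\|_{\operatorname{HS}}^2\lambda^{n+m-1}\sim\lambda^{-s}(\log 1/\lambda)^{-1}$ near $0$) makes your auxiliary integral diverge. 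Note the failure is at $\rho=\infty$, where $\rho^{1-2s}$ is not integrable and the exponential decay of the multiplier is not uniform in the spectral parameter; your shell integral contains all of $\{\rho\ge 2^{j/2}\}\times N$, so no spatial localisation can rescue it, and your estimate is in any case strictly stronger at infinity than the analogue of Proposition~\ref{prop:2} you appeal to.

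The paper closes this step by a different, cutoff-level idea that your plan is missing: take the cutoff adapted to the homogeneous gauge, $\psi_j(v,z,\rho)=\eta\big(2^{-j}((\rho^2+|v|^2)^2+|z|^2)\big)$. A direct computation (carried out in the paper for $\partial_\rho$ and for the horizontal fields) then gives the \emph{small} uniform bound $|\nabla\psi_j|\le C2^{-j/4}$, not merely $O(1)$, and on the support of $\nabla\psi_j$ one has $\rho\le 2^{(j+1)/4}$. Hence, using only Young's inequality $\|u(\cdot,\rho)\|_{L^2(N)}\le\|\varphi\|_{L^2(N)}$,
\begin{equation*}
\int_0^\infty\int_N u^2\,|\nabla\psi_j|^2\,\rho^{1-2s}\,dv\,dz\,d\rho
\le C\,2^{-j/2}\int_0^{2^{(j+1)/4}}\rho^{1-2s}\,d\rho\le C'\,2^{-js/2}\longrightarrow 0,
\end{equation*}
since $s>0$, while the term $(1-\psi_j)\nabla u$ is handled, as in your plan, as the tail of the finite energy from Proposition~\ref{prop:equ}. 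In other words, the correct transplant of the Euclidean argument replaces the Euclidean ball $\rho^2+|x|^2\le 2^j$ by the gauge ball, trading the weighted estimate of Proposition~\ref{prop:2} (which has no usable analogue here under the stated hypotheses) for the gain $2^{-j/4}$ in the gradient of the cutoff. Unless you can prove your weighted estimate — and the low-frequency computation above rules it out in this generality — your approximation step does not close as written.
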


In order to prove Theorem \ref{thm:Ws2N}, we need the following proposition: 

\begin{prop} 
\label{prop:equ}
Under the same hypothesis as in Theorem \ref{thm:Ws2N} we have
$$   \int_0^\infty \int_{N}| \N u(v,z, \rho)|^2 \rho^{1-2s}\,dv\,dz\, d\rho =2^{1-2s}\frac{\Gamma(1-s)}{\Gamma(s)} \int_{N} \mathcal{L}_s\varphi(v,z)\varphi(v,z) \,dv\,dz.
$$

\end{prop}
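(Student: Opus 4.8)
The plan is to specialize the key identity \eqref{eq:key}, obtained in the proof of Theorem~\ref{thm:ThfHardyi}, to the case in which the test function is the solution itself, i.e. $u=w$, where $w$ is the solution of the extension problem with initial condition $\varphi$. With this choice the integrand $\N u-\frac{u}{w}\N w$ vanishes identically, so the left-hand side of \eqref{eq:key} is zero; since $\mathbb{L}w=0$ by construction, the bulk term $\int_0^\infty\int_N\frac{u^2}{w}(\mathbb{L}w)\rho^{1-2s}$ also vanishes, and we are left with
\[
\int_0^{\infty}\int_N|\N w(x,\rho)|^2\rho^{1-2s}\,dx\,d\rho=-\int_N w(x,0)\,\lim_{\rho\to0}\rho^{1-2s}\pa_\rho w(x,\rho)\,dx .
\]
Using $w(x,0)=\varphi(x)$ together with the Neumann-data limit \eqref{eq:limits}--\eqref{eq:limro}, namely $-\lim_{\rho\to0}\rho^{1-2s}\pa_\rho w=2^{1-2s}\frac{\Gamma(1-s)}{\Gamma(s)}\mathcal{L}_s\varphi$, the right-hand side equals $2^{1-2s}\frac{\Gamma(1-s)}{\Gamma(s)}\int_N\mathcal{L}_s\varphi\,\varphi\,dx$, which is precisely the claimed identity.

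The genuine content is that \eqref{eq:key} was derived for $u\in C_0^\infty$, whereas the solution $w$ is not compactly supported, so it cannot be quoted verbatim; instead I would re-run the integration by parts leading to \eqref{eq:key} directly with $u=w$ and verify that every boundary term vanishes. For this I first record the integrability and decay of $w$ and its derivatives. From the representation $w(\cdot,\rho)=C_1(n,m,s)\rho^{2s}\varphi\ast\varphi_{s,\rho}$ and the hypothesis $\varphi\in L^2(N)$, each of $w(\cdot,\rho)$, $X_jw$, $\tfrac12 Z_kw$ and $\pa_\rho w$ is, for fixed $\rho>0$, a convolution of $\varphi\in L^2(N)$ with a derivative of the Schwartz-type kernel $\rho^{2s}\varphi_{s,\rho}\in L^2(N)$ (here one uses that left-invariant fields pass to the kernel factor); such a convolution is uniformly continuous and vanishes at infinity on $N$, exactly as in the Euclidean Proposition~\ref{prop:1}, so the horizontal and central integrations by parts produce no boundary terms. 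For the $\rho$-endpoints, the scaling $\rho^{2s}\varphi_{s,\rho}(v,z)=\rho^{-2(n+m)}\varphi_{s,1}(\rho^{-1}v,\rho^{-2}z)$ gives $|w(x,\rho)|\le C\rho^{-(n+m)}\|\varphi\|_2\|\varphi_{s,1}\|_2$, forcing $w$ and $\pa_\rho w$ to tend to $0$ as $\rho\to\infty$ and killing the boundary term there, while the term at $\rho=0$ is exactly the one retained above.

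The step I expect to be the main obstacle is the \emph{a priori} finiteness of $\int_0^\infty\int_N|\N w|^2\rho^{1-2s}\,dx\,d\rho$, without which the integration by parts is not yet meaningful. The safest route is a Plancherel computation on $N$, paralleling Proposition~\ref{prop:2}: since $\varphi_{s,\rho}$ is radial in both $v$ and $z$, the group Fourier transform $\pi_{\lambda,\omega}(w(\cdot,\rho))$ factors as $\pi_{\lambda,\omega}(\varphi)$ times an explicit scalar multiplier built from the coefficients $c_{k,\rho}^{\lambda}$ in \eqref{eq:coeff}, i.e. a function of $H(\lambda)/(2\lambda)$ and $\rho$. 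One checks that this multiplier behaves like a constant times $\rho^{2s}$ near $\rho=0$, so that the relevant integrals $\int_0^\infty(\cdots)\rho^{1-2s}\,d\rho$ for $w$ and its derivatives converge, and the $\rho$-integration reproduces exactly the spectral multiplier of $\mathcal{L}_s$. This simultaneously yields finiteness and identifies the left-hand side with $2^{1-2s}\frac{\Gamma(1-s)}{\Gamma(s)}(\mathcal{L}_s\varphi,\varphi)=2^{1-2s}\frac{\Gamma(1-s)}{\Gamma(s)}\|\varphi\|_{H^s(N)}^2$, which is finite under the hypothesis $\varphi,\mathcal{L}_{s/2}\varphi\in L^2(N)$, and confirms that the constant agrees with the one coming from the integration-by-parts argument. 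The delicate point in this route is the uniform control of the multiplier and its $\rho$-derivative near $\rho=0$, precisely the analogue of the role played by the asymptotics of the function $G$ in Proposition~\ref{prop:2}.
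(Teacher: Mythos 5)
Your proposal is correct and takes essentially the same route as the paper: the paper's proof likewise re-runs the integration by parts of the trace Hardy argument directly with $u=w$ (``along the lines of Proposition~\ref{prop:1}''), justifying the absence of boundary terms exactly as you do --- $w$, $Y_jw$, $\partial_\rho w$ are convolutions of $\varphi\in L^2(N)$ with $L^2$ kernels and hence vanish at infinity, while $|w(v,z,\rho)|\le \rho^{2s}\|\varphi\|_2\|\varphi_{s,\rho}\|_2\to 0$ as $\rho\to\infty$ --- and then reads off the surviving boundary term at $\rho=0$ from the Neumann limit \eqref{eq:limro}. Your supplementary Plancherel argument for a priori finiteness goes beyond the paper (which gets finiteness for free from the computation, since every gradient term is nonnegative and the boundary term is finite, and it also handles $\varphi\in H^s(N)$ via the pairing $(\mathcal L_s\varphi,\varphi)=\|\mathcal L_{s/2}\varphi\|_2^2$); only note the imprecision there: the scalar multiplier of $w(\cdot,\rho)$ tends to $1$ as $\rho\to0$ (it is an approximate identity), and it is the deviation $1-m(\rho)=O(\rho^{2s})$, hence $|\partial_\rho m(\rho)|^2\rho^{1-2s}\sim\rho^{2s-1}$, that makes the energy integral converge near $\rho=0$.
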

\begin{proof} 
The proof follows the lines of the proof of Proposition \ref{prop:1}: we take into account that $ Y_j \varphi_{s,\rho} \in L^2(N) $ when $ Y_j $ is any of the vector fields $ X_j $ or $ Z_j$ (and therefore $ Y_j u $ also vanishes at infinity), and the same for $ \partial_\rho \varphi_{s,\rho}$. Integrating by parts, we have that
$$  
\int_N |Y_ju(v,z,\rho)|^2 dv dz = \int_N u(v,z,\rho) Y_j^2u(v,z,\rho) dv dz.
$$
Moreover, $ |u(v,z,\rho)| \leq \rho^{2s} \|\varphi\|_2 \| \varphi_{s,\rho}\|_2 $, which implies that $ u(v,z,\rho) $ goes to $ 0 $ as $ \rho $ tends to infinity (the same for $ \partial_\rho u(v,z,\rho)$).
The computation with the $\rho$-derivative yields
\begin{multline*}
\int_0^{\infty}(\partial_{\rho} u(v,z,\rho))^2 \rho^{1-2s} d\rho   =\int_0^{\infty} u(v,z,\rho) \pa_\rho\big( \rho^{1-2s}\pa_\rho u(v,z,\rho)\big)\,d\rho\\- u(v,z, 0)\,\lim_{\rho \rightarrow 0}\big(\rho^{1-2s}\pa_{\rho} u\big)(v,z, \rho).
\end{multline*}
Now we sum up and we use the fact that $ u $ solves the extension problem with initial condition $ \varphi$. The result follows.
\end{proof}

From Proposition \ref{prop:equ} we see that  the ``energy norm'' of the solution $ u $ is a constant multiple of the $ H^s(N) $ norm of the initial condition. 

\begin{proof}[Proof of Theorem \ref{thm:Ws2N}] We are now in a position to prove Theorem \ref{thm:Ws2N}. As the energy norm of $ u $ is finite, all we have to do is to show that it can be approximated by a sequence of compactly supported smooth functions on $ N \times \R.$ As $ u(v,z,\rho) $ is even in $ \rho $ we can think of it as a smooth function on $ N \times \R.$ Let $ \eta \in C_0^\infty(\R) $ be supported in $  |\rho| \leq 2 $ and assume that $ \eta = 1 $ on $ |\rho| \leq 1.$ Let $ \psi_j(v,z,\rho) = \eta(2^{-j} ((\rho^2+|v|^2)^2+|z|^2))$ and define $ u_j(v,z,\rho) = \psi_j(v,z,\rho)u(v,z,\rho).$ We will show that $ u_j $ converges to $ u $ in the energy norm. Observe that $ \nabla(u-u_j) = (1-\psi_j) \nabla u+ u \nabla \psi_j .$ Since $ (1-\psi_j) $ is supported in $ ((\rho^2+|v|^2)^2+|z|^2) \geq 2^j $ it follows that 
\begin{multline*} 
\int_0^\infty \int_{N} |(1-\psi_j)(v,z,\rho)|^2 | \N u(v,z, \rho)|^2 \rho^{1-2s}\,dv\,dz\, d\rho\\
\leq C  \int_{ ((\rho^2+|v|^2)^2+|z|^2) \geq 2^j } | \N u(v,z, \rho)|^2 \rho^{1-2s}\,dv\,dz\, d\rho
\end{multline*}
which tends to $ 0 $ as $ j $ tends to infinity, in view of Proposition \ref{prop:equ}. On the other hand, $ \N \psi_j $ is supported on $ 2^j \leq  ((\rho^2+|v|^2)^2+|z|^2) \leq 2^{j+1} $. Moreover, we have that 
\begin{align*}
|\partial_{\rho}\psi_j(v,z,\rho)|&=2^{-j}2(\rho^2+|v|^2)2|\rho|\eta'(2^{-j}((\rho^2+|v|^2)^2+|z|^2))\\
&\le C 2^{-j}((\rho^2+|v|^2)^2+|z|^2)^{3/4}\eta'(2^{-j}((\rho^2+|v|^2)^2+|z|^2))\\
&=C 2^{-j3/4}((\rho^2+|v|^2)^2+|z|^2)^{3/4}\eta'(2^{-j}((\rho^2+|v|^2)^2+|z|^2))2^{-j/4}\\
&=C [2^{-j}((\rho^2+|v|^2)^2+|z|^2)]^{3/4}\eta'(2^{-j}((\rho^2+|v|^2)^2+|z|^2))2^{-j/4}.
\end{align*}
Therefore, calling $t:=2^{-j}((\rho^2+|v|^2)^2+|z|^2))$, we have 
$$
|\partial_{\rho}\psi_j(v,z,\rho)|\le C t^{3/4}\eta'(t)2^{-j/4}\le C 2^{-j/4}.
$$
Same estimate is true of $ X \psi_j(v,z,\rho) $ for any other vector field $ X.$  For instance, let us compute and estimate $|X_k \psi_j(v,t,\rho)|$ on the Heisenberg group  with $v=x+iy\in \C^n$ and $t\in \R$ for simplicity. We get
\begin{align*}
|X_k \psi_j(v,t,\rho)|&\le 2^{-j}\big(4(\rho^2+|v|^2)|x_k|+|y_k||t|\big)\eta'(2^{-j}((\rho^2+|v|^2)^2+|z|^2))\\
&\le C2^{-j}((\rho^2+|v|^2)^2+t^2)^{1/2}((\rho^2+|v|^2)^2+t^2)^{1/4}\eta'(2^{-j}((\rho^2+|v|^2)^2+|z|^2))\\
&=C [2^{-j}((\rho^2+|v|^2)^2+t^2)]^{3/4}\eta'(2^{-j}((\rho^2+|v|^2)^2+|z|^2))2^{-j/4}.
\end{align*}
As before, we infer that
\begin{equation}
\label{eq:dersmall}
| \N \psi_j(v,z,\rho)| \leq C 2^{-j/4}. 
\end{equation}
Since the solution $u$   of the extension problem with initial condition $\varphi$  is given by $u= C_1(n,m,s)\rho^{2s}\varphi\ast \varphi_{s,\rho},$ by Young's inequality, we get $\|u\|_{L^2(N)}\le \|\varphi\|_{L^2(N)}$ by the choice of the constant $ C_1(n,m,s).$
Now  as $ \N \psi_j $ is supported in $ 2^j \leq  ((\rho^2+|v|^2)^2+|z|^2) \leq 2^{j+1} $
\begin{multline*}
\int_0^\infty \int_{N} |u(v,z,\rho)|^2 | \N \psi_j(v,z, \rho)|^2 \rho^{1-2s}\,dv\,dz\, d\rho\\
 \leq C 2^{-j/2} \int_0^{2^{(j+1)/4}} \Big( \int_{N} |u(v,z,\rho)|^2  dv\,dz\,\Big) \rho^{1-2s}\, d\rho \leq C 2^{-j/2}\int_0^{2^{(j+1)/4}}\rho^{1-2s}\, d\rho
\end{multline*}
which clearly goes to zero as $ j $ tends to infinity since $ s > 0.$
This completes the proof of Theorem \ref{thm:Ws2N}.

\end{proof}

We are going to show another form of trace Hardy inequality. We make use of the connection between solutions of the extension problem and eigenfunctions of the Laplace-Beltrami operator on $ S $ which we have already exploited.
Recall that  $$ \Delta_S =  \sum_{j=1}^{2n} E_j^2 +\sum_{k=1}^m T_k^2 +H^2 -\frac{1}{2}Q H.
$$
Given a function $u$ on $S,$  as before, we define  $ w(v,z,\rho) = u(2^{-1/2}v,2^{-1}z,\sqrt{2\rho})$ and $ \widetilde{w}(v,z,\rho) = \rho^{\frac{(n+m-s)}{2}}w(v,z,\rho).$ We also denote by $ \N_Sw $ the full gradient of $ w $ on $ S.$ Making use of the connection between the two gradients, we see that
$$ \int_0^\infty \int_{N}| \N u(v,z, \rho)|^2 \rho^{1-2s}\,dv\,dz\, d\rho = C \int_0^{\infty}\int_{N}|\nabla_S w(v,z,\rho)|^2\rho^{-1-s}\,dv\,dz\,d\rho $$
 whenever $ u $ and $ w $ are related as above. We now prove the following proposition.

 \begin{prop}
\label{thm:tH2}
Let $0<s<1$. Assume that $ w $ is the restriction to $ S $ of a function in $ C_0^\infty(N\times \R).$ Then we have the identity
\begin{align*}
&\int_0^{\infty}\int_{N}|\nabla_S w(v,z,\rho)|^2\rho^{-1-s}\,dv\,dz\,d\rho\\
&\quad =\int_0^{\infty}\int_N\widetilde{w}(v,z,\rho)\big(-\Delta_S-\frac{(n+m)^2-s^2}{4}\big)\widetilde{w}(v,z,\rho)\rho^{-n-m-1}\,dv\,dz\,d\rho\\
&\qquad -\int_{N} w(v,z,0)\lim_{\rho \rightarrow 0}\rho^{1-s} \pa_\rho w(v,z, \rho)\,dv\,dz\,d\rho.
\end{align*} 
 
\end{prop}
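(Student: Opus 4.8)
The plan is to derive the identity from the ``ground state substitution'' $\widetilde{w}=\rho^{a}w$ with $a=\frac{n+m-s}{2}$ that is already built into the statement, and then to integrate by parts against the left Haar measure $d\mu=\rho^{-n-m-1}\,dv\,dz\,d\rho$, with respect to which $\Delta_S$ is self-adjoint. First I would record the pointwise consequence of the substitution for the gradient. Since $E_j=\sqrt{\rho}\,X_j$ and $T_k=\rho Z_k$ involve no $\rho$-differentiation, they commute with multiplication by $\rho^{-a}$, so $E_jw=\rho^{-a}E_j\widetilde{w}$ and $T_kw=\rho^{-a}T_k\widetilde{w}$, whereas $H=\rho\partial_\rho$ produces a shift, $Hw=\rho^{-a}(H\widetilde{w}-a\widetilde{w})$. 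Since $\{E_j,T_k,H\}$ is an orthonormal frame this gives
$$
|\N_S w|^2\rho^{2a}=\sum_j(E_j\widetilde{w})^2+\sum_k(T_k\widetilde{w})^2+(H\widetilde{w}-a\widetilde{w})^2=|\N_S\widetilde{w}|^2-2a\,\widetilde{w}\,H\widetilde{w}+a^2\widetilde{w}^2,
$$
and as $\rho^{-1-s}=\rho^{2a}\rho^{-n-m-1}$, the left-hand side of the proposition equals $\int_S\big(|\N_S\widetilde{w}|^2-2a\,\widetilde{w}\,H\widetilde{w}+a^2\widetilde{w}^2\big)\,d\mu$.

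Next I would integrate each piece by parts. For $\int_S|\N_S\widetilde{w}|^2\,d\mu$ I invoke Green's identity for $\Delta_S$: the Dirichlet form equals $-\int_S\widetilde{w}\,\Delta_S\widetilde{w}\,d\mu$ plus a boundary contribution. The horizontal and central directions contribute no boundary term, since $X_j,Z_k$ are divergence free on the unimodular nilpotent group $N$ and $\widetilde w$ has compact support there; the only boundary appears in the $H$-direction and is located at $\rho=0$ (the upper limit $\rho\to\infty$ vanishes by compact support of $w$). One checks directly that this integration by parts reproduces the full $\Delta_S=\sum E_j^2+\sum T_k^2+H^2-\tfrac{Q}{2}H$, the first-order term $-\tfrac{Q}{2}H$ (with $\tfrac{Q}{2}=n+m$) arising precisely from differentiating the weight. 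For the cross term I use $\widetilde{w}H\widetilde{w}=\tfrac12H(\widetilde{w}^2)$ and integrate by parts in $\rho$, which yields $a(n+m)\int_S\widetilde{w}^2\,d\mu$ together with a further boundary term at $\rho=0$. Collecting interior contributions, the zeroth order terms combine through the algebraic identity $a^2-a(n+m)=-\tfrac14\big((n+m)^2-s^2\big)$, producing exactly $\int_S\widetilde{w}\big(-\Delta_S-\tfrac14((n+m)^2-s^2)\big)\widetilde{w}\,\rho^{-n-m-1}\,dv\,dz\,d\rho$.

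The delicate point, and the step I expect to require the most care, is the treatment of the boundary terms at $\rho=0$. Using $\widetilde{w}=\rho^a w$ one finds $\widetilde{w}\,\partial_\rho\widetilde{w}\,\rho^{1-n-m}=a\rho^{-s}w^2+\rho^{1-s}w\,\partial_\rho w$ and $\widetilde{w}^2\rho^{-n-m}=\rho^{-s}w^2$, so each individual boundary integral carries the factor $\rho^{-s}$, which is singular as $\rho\to0$ since $0<s<1$. The key observation is that the coefficients are arranged so that these singular $\rho^{-s}w^2$ contributions, coming respectively from $\int_S|\N_S\widetilde{w}|^2\,d\mu$ and from $-2a\int_S\widetilde{w}H\widetilde{w}\,d\mu$, cancel exactly, leaving only the finite term $-\int_N w(v,z,0)\lim_{\rho\to0}\rho^{1-s}\partial_\rho w\,dv\,dz$. (For a genuine $C_0^\infty$ restriction this limit in fact vanishes, since $\partial_\rho w$ stays bounded near $\rho=0$ while $\rho^{1-s}\to0$; I nonetheless keep the term explicit, as this is the form needed when the identity is later applied by density to solutions of the extension problem.) Assembling the interior and boundary parts yields the claimed identity, completing the proof.
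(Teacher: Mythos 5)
Your proof is correct and essentially coincides with the paper's own argument: there too the identity comes from the ground state substitution $\widetilde{w}=\rho^{\frac{n+m-s}{2}}w$ together with integration by parts in $\rho$ against the weighted measures, the horizontal and central directions contributing no boundary terms, and the singular $\rho^{-s}w^2$ boundary contributions at $\rho=0$ cancelling exactly so that only $w\,\rho^{1-s}\partial_\rho w$ survives in the limit, yielding the stated term $-\int_N w(v,z,0)\lim_{\rho\to0}\rho^{1-s}\partial_\rho w\,dv\,dz$. The one point you should make explicit is the truncation: since for $w(\cdot,\cdot,0)\not\equiv 0$ your three separated integrals are individually divergent at $\rho=0$ (e.g.\ $a^2\int_S\widetilde{w}^2\,d\mu=a^2\int\rho^{-1-s}w^2$, and likewise the interior terms produced by Green's identity and by the cross term), the expansion and both integrations by parts must be performed on $A\le\rho\le B$ and the cancellations --- interior as well as boundary --- observed at finite $A$ before letting $A\to0$, which is exactly how the paper organizes the computation, writing $H^2-(n+m)H=\rho^2\partial_\rho^2-(n+m-1)\rho\partial_\rho$ and integrating each piece by parts over the truncated interval.
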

\begin{proof} First of all, note that the assumptions on $ s $ and $ w $ and the definition of $ \nabla_S $ ensures that all the integrals involved in Proposition \ref{thm:tH2} are finite.
By integration by parts we have, as the boundary terms vanish,
\begin{equation}
\label{eq:1b}
\int_N\sum_{j=1}^{2n}E_j^2\widetilde{w}(v,z,\rho) \widetilde{w}(v,z,\rho)\rho^{-n-m-1}\,dv\,dz\,d\rho=-\int_N\sum_{j=1}^{2n}|E_jw|^2\rho^{-1-s}\,dv\,dz\,d\rho
\end{equation}
and also
\begin{equation}
\label{eq:2b}
\int_N\sum_{k=1}^{m}T_k^2\widetilde{w}(v,z,\rho) \widetilde{w}(v,z,\rho)\rho^{-n-m-1}\,dv\,dz\,d\rho=-\int_N\sum_{k=1}^{m}|T_kw|^2\rho^{-1-s}\,dv\,dz\,d\rho.
\end{equation}
Now, observe that
$$
H^2-(n+m)H=\rho^2\partial_{\rho}^2-(n+m-1)\rho\partial_{\rho}.
$$
Therefore, we consider the integral
\begin{multline}
\label{eq:1u}
\int_0^{\infty}(\rho^2\partial_{\rho}^2-(n+m-1)\rho\partial_{\rho})\widetilde{w}(v,z,\rho) \widetilde{w}(v,z,\rho)\rho^{-m-n-1}d\rho\\=\int_0^{\infty}\partial_{\rho}^2(\rho^{\frac{n+m-s}{2}}w)\rho^{-\frac{n+m+s-2}{2}}w\,d\rho-(n+m-1)\int_0^{\infty}\partial_{\rho}(\rho^{\frac{n+m-s}{2}}w)\rho^{-\frac{n+m+s}{2}}w\,d\rho.
\end{multline}
We first look at the truncated  integral 
$$
I:=\int_A^{B}\partial_{\rho}^2(\rho^{\frac{n+m-s}{2}}w)\rho^{-\frac{n+m+s-2}{2}}w\,d\rho
$$
where $0 <  A<B\le \infty$. Integration by parts and some computations show that
\begin{multline}
\label{eq:2u}
I=-\frac14 \big(2s(1-s)-(n+m-s)(n+m+s-2)\big)\int_A^{B}\rho^{-(1+s)}w^2d\rho\\-\int_A^{B}\rho^{1-s}(\partial_{\rho}w)^2\,d\rho+\rho^{-\frac{n+m+s-2}{2}}w\partial_{\rho}(\rho^{\frac{n+m-s}{2}}w)\Big|^B_A-\frac{1-s}{2}\rho^{-2}w^2\Big|^B_A.
\end{multline}
Concerning the second integral, we have
$$
II:=-(n+m-1)\int_0^{\infty}\partial_{\rho}(\rho^{\frac{n+m-s}{2}}w)\rho^{-\frac{n+m+s}{2}}w\,d\rho,
$$
which by integration by parts and some calculations leads to
\begin{equation}
\label{eq:3u}
II=-(n+m-1)\big(\frac{n+m}{2}\big)\int_A^{B}\rho^{-(1+s)}w^2d\rho-\frac{n+m-1}{2}\rho^{-2}w^2\Big|^B_A.
\end{equation}
Then, collecting \eqref{eq:1u}, \eqref{eq:2u} and \eqref{eq:3u}, we have
\begin{align*}
&\int_A^{B}(\rho^2\partial_{\rho}^2-(n+m-1)\rho\partial_{\rho})\widetilde{w}(v,z,\rho) \widetilde{w}(v,z,\rho)\rho^{-m-n-1}d\rho\\
&\quad =-\frac14\big((n+m)^2-s^2\big)\int_A^B\rho^{-(1+s)}w^2\,d\rho-\int_A^B\rho^{1-s}(\partial_{\rho}w)^2\,d\rho\\
&\qquad+\rho^{-\frac{n+m+s-2}{2}}w\partial_{\rho}[\rho^{\frac{n+m-s}{2}}w]\Big|^B_A-\frac{n+m-s}{2}\rho^{-2}w^2\Big|^B_A.
\end{align*}
Finally, observe that 
$$
\rho^{-\frac{n+m+s-2}{2}}w\partial_{\rho}(\rho^{\frac{n+m-s}{2}}w)\Big|^B_A=\frac{n+m-s}{2}\rho^{-2}w^2\Big|^B_A+w\rho^{1-s}\partial_{\rho}w\Big|^B_A,
$$
and consequently we obtain
\begin{align}
\label{eq:3b}
\notag&\int_0^{\infty}(H^2-(n+m)H)\widetilde{w}(v,z,\rho) \widetilde{w}(v,z,\rho)\rho^{-m-n-1}d\rho\\
&=\int_0^{\infty}(\rho^2\partial_{\rho}^2-(n+m-1)\rho\partial_{\rho})\widetilde{w}(v,z,\rho) \widetilde{w}(v,z,\rho)\rho^{-m-n-1}d\rho\\
\notag&=-\frac14\big((n+m)^2-s^2\big)\int_0^\infty\rho^{-(1+s)}w(v,z,\rho)^2\,d\rho-\int_0^\infty\rho^{-1-s}(\rho\partial_{\rho}w(v,z,\rho))^2\,d\rho\\
\notag&\qquad+w(v,z,0)\lim_{\rho \rightarrow 0}\rho^{1-s} \pa_\rho w(v,z, \rho).
\end{align}
Taking into account of \eqref{eq:1b}, \eqref{eq:2b} and \eqref{eq:3b}, we obtain the conclusion. 
\end{proof}
From Proposition \ref{thm:tH2} we can immediately obtain the following trace inequality.

\begin{thm}
\label{thm:tH2m}
Let $0<s<1$. We have the inequality
$$
\int_0^{\infty}\int_{N}|\nabla_S w(v,z,\rho)|^2\rho^{-1-s}\,dv\,dz\,d\rho\ge -\int_{N} w(v,z,0)\lim_{\rho \rightarrow 0}\rho^{1-s} \pa_\rho w(v,z, \rho)\,dv\,dz\,d\rho,
$$
 for any real valued function $w(v,z,\rho)=u(2^{-1/2}v,2^{-1}z,\sqrt{2\rho})$ with $ u \in \widetilde{W}_0^{s,2}(S).$  Moreover, equality holds if and only if $u$ is a solution of the extension problem \eqref{eq:ep}. 
\end{thm}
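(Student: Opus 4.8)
The plan is to read Theorem~\ref{thm:tH2m} off directly from the identity in Proposition~\ref{thm:tH2}, the only extra ingredient being the nonnegativity of the quadratic form attached to $-\Delta_S-\frac14\big((n+m)^2-s^2\big)$. Write $\lambda_0=\frac14\big((n+m)^2-s^2\big)$ and let $d\mu_S=\rho^{-n-m-1}\,dv\,dz\,d\rho$ be the left Haar measure. For $w$ the restriction to $S$ of a function in $C_0^\infty(N\times\R)$, Proposition~\ref{thm:tH2} reads
\begin{equation*}
\int_0^\infty\int_N|\nabla_S w|^2\rho^{-1-s}\,dv\,dz\,d\rho=\int_0^\infty\int_N\widetilde w\,(-\Delta_S-\lambda_0)\widetilde w\,d\mu_S-\int_N w(v,z,0)\lim_{\rho\to0}\rho^{1-s}\partial_\rho w\,dv\,dz,
\end{equation*}
so the claimed inequality is exactly the statement that the middle term
$$
I(w):=\int_0^\infty\int_N\widetilde w\,(-\Delta_S-\lambda_0)\widetilde w\,d\mu_S
$$
is nonnegative, and the equality case is exactly its vanishing. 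Everything thus reduces to proving $I(w)\ge0$ with equality iff $(-\Delta_S-\lambda_0)\widetilde w=0$.

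First I would dispose of the reduction to smooth compactly supported $w$. Since $u\in\widetilde W_0^{s,2}(S)$ is by definition an energy-norm limit of functions in $C_0^\infty(N\times\R)$, the left-hand side passes to the limit, and the boundary term passes to the limit once one checks, exactly as in Proposition~\ref{prop:equ} and the proof of Theorem~\ref{thm:Ws2N}, that $\rho^{1-s}\partial_\rho w$ admits a trace at $\rho=0$ controlled by the energy norm. This is the same limiting mechanism already employed for Theorem~\ref{thm:GtHi}.

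The conceptual reason for $I(w)\ge0$ is spectral. A short computation gives $\Delta_S\rho^a=\big(a^2-(n+m)a\big)\rho^a$, which equals $-\lambda_0\rho^a$ for $a=\frac{n+m-s}{2}$; hence $\rho^{(n+m-s)/2}$ is a strictly positive solution of $(-\Delta_S-\lambda_0)\phi=0$ on $S$. Since $0<s<1$, the eigenvalue $\lambda_0$ lies strictly below the bottom $\frac{(n+m)^2}{4}$ of the spectrum of the nonnegative operator $-\Delta_S$ on $L^2(S,d\mu_S)$ (the value attained at $s=0$). By the classical ground-state positivity criterion, the existence of such a positive solution forces the quadratic form of $-\Delta_S-\lambda_0$ to be nonnegative, giving $I(w)\ge0$. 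The equality statement follows because $I(w)=0$ forces $(-\Delta_S-\lambda_0)\widetilde w=0$, i.e. $\widetilde w$ is an eigenfunction of $\Delta_S$ with eigenvalue $-\lambda_0$; by the correspondence set up in Subsection~\ref{sub:cha} this is precisely the condition that $u$ solve the extension problem~\eqref{eq:ep}.

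The hard part is functional-analytic: the ground state $\rho^{(n+m-s)/2}$ sits at the very edge of the spectrum, so $\widetilde w=\rho^{(n+m-s)/2}w$ just fails to lie in $L^2(S,d\mu_S)$ (near $\rho=0$ one has $|\widetilde w|^2d\mu_S\sim\rho^{-1-s}\,d\rho$). Thus $I(w)$ is a renormalised quantity, not a genuine form value, and the spectral bound cannot be quoted verbatim. I would resolve this in one of two equivalent ways. Either truncate $\widetilde w$ away from $\rho=0$, apply nonnegativity on the interior (where the positivity criterion is valid for $C_0^\infty(S)$ functions), and let the cutoff recede, checking that the extra terms reproduce exactly the boundary integral already isolated in Proposition~\ref{thm:tH2}. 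Or, in parallel with the proof of Theorem~\ref{thm:ThfHardyi} and more transparently for the equality case, complete the square: taking $W$ the solution of the extension problem in the $w$-coordinates with the same boundary datum $w(\cdot,0)$, one has
$$
0\le\int_0^\infty\int_N\Big|\nabla_S w-\frac{w}{W}\nabla_S W\Big|^2\rho^{-1-s}\,dv\,dz\,d\rho,
$$
and integration by parts, using that $\widetilde W$ solves $(-\Delta_S-\lambda_0)\widetilde W=0$ so that all bulk $W$-terms cancel, leaves exactly $\int|\nabla_S w|^2\rho^{-1-s}+\int_N w(\cdot,0)\lim_{\rho\to0}\rho^{1-s}\partial_\rho w$ on the right. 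The square vanishes iff $w\equiv W$, yielding both the inequality and the sharp characterisation of equality.
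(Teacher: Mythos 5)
Your main line is exactly the paper's proof: read the inequality off the identity of Proposition \ref{thm:tH2}, observe that the bulk term $I(w)=\int\widetilde{w}\,\big(-\Delta_S-\tfrac14((n+m)^2-s^2)\big)\widetilde{w}\,d\mu_S$ is nonnegative because the eigenvalue $\tfrac14((n+m)^2-s^2)$ sits below the bottom $\tfrac14(n+m)^2$ of the spectrum of $-\Delta_S$ (the paper's one-line justification ``since $-\Delta_S\ge\tfrac14(n+m)^2$''), extend by density to $\widetilde{W}_0^{s,2}(S)$, and settle equality by identifying $I(w)=0$ with $\widetilde{w}$ being an eigenfunction, which by Subsection \ref{sub:cha} is precisely the statement that $u$ solves \eqref{eq:ep}. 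Where you go beyond the paper is in flagging that $\widetilde{w}=\rho^{(n+m-s)/2}w$ fails to be in $L^2(S,d\mu_S)$ when $w(\cdot,0)\not\equiv 0$, so that the spectral bound cannot be quoted verbatim and $I(w)$ is a renormalised quantity. This is a genuine refinement: the paper's terse sentence hides exactly the bookkeeping you describe, and your repair (a) --- truncate away from $\rho=0$, use positivity of the form on functions compactly supported in the open half-space (your Allegretto--Piepenbrink argument with the positive solution $\rho^{(n+m-s)/2}$ is a legitimate substitute for the bare spectral bound), and check that the cutoff terms reproduce the boundary integral --- is sound, and is in effect what the truncated $A\le\rho\le B$ computation in the proof of Proposition \ref{thm:tH2} carries out.

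Your repair (b), however, is incorrect as stated. Completing the square against the extension solution $W$ with datum $w(\cdot,0)$ and integrating by parts, the surviving boundary term is $\int_N \frac{w(v,z,0)^2}{W(v,z,0)}\lim_{\rho\to0}\rho^{1-s}\partial_\rho W(v,z,\rho)\,dv\,dz$ --- it carries the normal derivative of the \emph{solution} $W$, not of $w$ as you wrote. What (b) proves is therefore the general trace Hardy inequality of Theorem \ref{thm:ThfHardyi} with $\varphi=u(\cdot,0)$ (the Remark following it, energy $\ge c\,(\varphi,\mathcal{L}_s\varphi)$), which is a \emph{different} lower bound from the one in Theorem \ref{thm:tH2m}: the two agree only at $w=W$. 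Indeed, in the $w$-coordinates the indicial exponents at $\rho=0$ are $\rho^0$ and $\rho^s$, so perturbing $w=W+\epsilon\rho^s\psi$ leaves the trace, hence $\lim\rho^{1-s}\partial_\rho W$, unchanged while shifting $\lim\rho^{1-s}\partial_\rho w$ by $\epsilon s\psi$, whose sign is arbitrary; neither bound dominates the other, and converting (b) into the stated theorem requires re-invoking the identity of Proposition \ref{thm:tH2}, i.e.\ collapsing back into route (a). Since (a) suffices, your proof stands, but you should drop (b) or state honestly which inequality it yields.
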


\begin{proof} The stated inequality for $ w \in C_0^\infty(S) $ follows from the above proposition since $ -\Delta_S \geq \frac{1}{4} (n+m)^2.$  By approximating $ u $ and hence $ w$ by a sequence of $ C_0^\infty(S) $ functions, we can conclude that the inequality remains true under the hypothesis on $ w$.
We have already remarked in Subsection \ref{sub:cha} that when $u$ satisfies the extension problem, then $ \widetilde{w}(v,z,\rho) = \rho^{\frac{(n+m-s)}{2}}w(v,z,\rho) $ is an eigenfunction of $\Delta_S$ with eigenvalue $-\frac{(n+m)^2-s^2}{4}$. Consequently, we can easily conclude that equality holds if and only if $ u $ is the solution of the extension problem.
\end{proof}

Theorems \ref{thm:ThfHardyi} and \ref{thm:Ws2N} lead to some interesting corollaries.

\begin{cor}
\label{thm:traceH}
Let $0<s<1$, $\delta>0$. Then for all real valued $ u \in \widetilde{W}_0^{s,2}(S),$ we have 
\begin{equation}
\label{eq:tHn}
\int_0^{\infty}\int_{N}|\nabla u(v,z,\rho)|^2\rho^{1-2s}\,dv\,dz\,d\rho\ge C_{n,s}\delta^{2s}\int_{N}\frac{u(v,z,0)^2}{\big((\delta^2+|v|^2)^2+16z^2\big)^s}\,dv\,dz
\end{equation}
with the constant given by
$$
C_{n,s}=\frac{2^{1-2s}\Gamma(1-s)}{\Gamma(s)}4^{2s}\frac{\Gamma(\frac{n+1+s}{2})}{\Gamma(\frac{n+1-s}{2})}\frac{\Gamma(\frac{n+m+s}{2})}{\Gamma(\frac{n+m-s}{2})}.
$$
The above inequality is sharp and equality is obtained when $u(v,z,\rho)=\varphi_{-s,\delta}\ast \rho^{2s}\varphi_{s,\rho}(v,z)$.
\end{cor}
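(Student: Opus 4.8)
The plan is to obtain Corollary~\ref{thm:traceH} as a direct specialization of the general trace Hardy inequality in Theorem~\ref{thm:ThfHardyi}, applied with the particular choice $\varphi=\varphi_{-s,\delta}$, and then to identify the weight appearing on the right-hand side by means of the Cowling--Haagerup type identity of Theorem~\ref{thm:CH}. First I would check that $\varphi_{-s,\delta}(v,z)=\big((\delta^2+|v|^2)^2+16|z|^2\big)^{-\frac{n+m-s}{2}}$ satisfies the hypotheses of Theorem~\ref{thm:ThfHardyi}. Its membership in $L^2(N)$ follows from Lemma~\ref{lem:I} applied with $j=0$ and $\alpha=n+m-2s$, which is finite precisely when $n+m-2s>0$; since $0<s<1$ and $n,m\ge 1$ this always holds. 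The same lemma (now with $\alpha=n+m+2s$) shows $\mathcal{L}_s\varphi_{-s,\delta}\in L^2(N)$, so $\varphi_{-s,\delta}$ lies in the domain of $\mathcal{L}_s$, and $\varphi_{-s,\delta}^{-1}\mathcal{L}_s\varphi_{-s,\delta}$ is bounded, hence locally integrable, for each fixed $\delta>0$.

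The decisive computation is immediate from Theorem~\ref{thm:CH}: since $\mathcal{L}_s\varphi_{-s,\delta}=C_2(n,m,s)\delta^{2s}\varphi_{s,\delta}$, dividing by $\varphi_{-s,\delta}$ and cancelling the common power gives
$$
\frac{\mathcal{L}_s\varphi_{-s,\delta}(v,z)}{\varphi_{-s,\delta}(v,z)}=C_2(n,m,s)\,\delta^{2s}\big((\delta^2+|v|^2)^2+16|z|^2\big)^{-s}.
$$
Substituting this into the right-hand side of Theorem~\ref{thm:ThfHardyi} yields exactly \eqref{eq:tHn} with constant $C_{n,s}=2^{1-2s}\frac{\Gamma(1-s)}{\Gamma(s)}C_2(n,m,s)$, and inserting the value of $C_2(n,m,s)$ from \eqref{eq:C2} recovers the stated form of $C_{n,s}$. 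This is pure bookkeeping of Gamma factors and I would not belabour it.

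For sharpness I would take $u$ to be the solution of the extension problem \eqref{eq:ep} with initial condition $\varphi_{-s,\delta}$, that is $u=C_1(n,m,s)\rho^{2s}\varphi_{-s,\delta}\ast\varphi_{s,\rho}$ (the normalising constant is irrelevant, since the inequality is homogeneous of degree two in $u$). Revisiting identity \eqref{eq:key} from the proof of Theorem~\ref{thm:ThfHardyi} with $w=u$, the term involving $\mathbb{L}w$ vanishes because $w$ solves the extension equation, and the integrand $\N u-\frac{u}{w}\N w$ vanishes identically; hence both sides collapse and equality holds. The one point that genuinely needs justification is that this extremal $u$ is an admissible competitor, i.e.\ $u\in\widetilde{W}_0^{s,2}(S)$. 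This is precisely the content of Theorem~\ref{thm:Ws2N}, which applies once we know $\varphi_{-s,\delta}\in H^s(N)$; and this membership follows from the $L^2$ bounds established above, since $(\mathcal{L}_s\varphi_{-s,\delta},\varphi_{-s,\delta})=\|\mathcal{L}_s^{1/2}\varphi_{-s,\delta}\|_2^2<\infty$.

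I do not anticipate a serious obstacle: the corollary is a direct instance of the already established framework, and the constant identification is routine. The only nontrivial input is the verification that the extremal function belongs to the energy space $\widetilde{W}_0^{s,2}(S)$, which is exactly what Theorem~\ref{thm:Ws2N} provides; everything else reduces to the membership estimates furnished by Lemma~\ref{lem:I} and the identity of Theorem~\ref{thm:CH}.
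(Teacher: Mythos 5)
Your proposal is correct and follows essentially the same route as the paper: specialize Theorem~\ref{thm:ThfHardyi} to $\varphi=\varphi_{-s,\delta}$, identify the weight $\varphi_{-s,\delta}^{-1}\mathcal{L}_s\varphi_{-s,\delta}=C_2(n,m,s)\delta^{2s}\big((\delta^2+|v|^2)^2+16|z|^2\big)^{-s}$ via Theorem~\ref{thm:CH}, and obtain equality for the solution with initial datum $\varphi_{-s,\delta}$, whose admissibility in $\widetilde{W}_0^{s,2}(S)$ is exactly Theorem~\ref{thm:Ws2N}. The only cosmetic difference is that for the equality step you revisit the identity \eqref{eq:key} with $w=u$ (which was derived for compactly supported $u$), whereas the paper invokes Proposition~\ref{prop:equ}, which is precisely the rigorous version of that computation for the non-compactly-supported solution.
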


\begin{proof}
We immediately obtain the inequality by taking $\varphi=\varphi_{-s,\delta}$ in Theorem \ref{thm:ThfHardyi} for a fixed $\delta>0$ and using Theorem~\ref{thm:CH}.  

As for the equality, when we take $u(v,z,\rho)=C_1(n,m,s)\varphi_{-s,\delta}\ast \rho^{2s}\varphi_{s,\rho}(v,z)$ in the inequality \eqref{eq:tHn}, in view of Theorem \ref{thm:Ws2N}, the left hand side reduces to $2^{1-2s}\frac{ \Gamma(1-s)}{\Gamma(s)}\varphi_{-s,\delta}\mathcal{L}_s \varphi_{-s,\delta} $ which, by Theorem~\ref{thm:CH}, is nothing but the right hand side of \eqref{eq:tHn}.
\end{proof}

\begin{rem}
In Corollary \ref{thm:traceH}, if we take $ u $ to be the solution of the extension problem with initial condition $ f $, the left hand side reduces to a constant multiple of $ (f,\mathcal{L}_s f).$ This immediately proves Corollary \ref{cor:HarNH}.
\end{rem}

Now we are going to prove  Theorem \ref{thm:tHihomo}. Let us recall some definitions.
Let
$$
\varphi_s(v,z)=|(v,z)|^{-(n+m+s)}\quad \text{ and } \quad \psi_s(v,z)=C_1(n,m,s)(\varphi_s\ast |\cdot|^{-Q+2s})(v,z). 
$$
Note that $\psi_s(v,z)$ is homogeneous of degree $-(n+m-s)$. Let us define 
\begin{equation*}
w_s(v,z)=\varphi_s(v,z)\psi_s(v,z)^{-1}
\end{equation*} 
so that $w_s$ is homogeneous of degree $-2s$.

\begin{proof}[Proof of Theorem \ref{thm:tHihomo}]
Let $u\in C_0^{\infty}(N\times \R)$ and let $\varphi>0$ be any function which is in the domain of $\mathcal{L}_s$. Let $w=C_1(n,m,s)\rho^{2s}\varphi\ast \varphi_{s,\rho}$ be the solution of the extension problem \eqref{eq:ep} with initial condition $\varphi$. Then, proceeding as in the proof of Theorem \ref{thm:ThfHardyi}, we get
\begin{multline}
\label{eq:key2}
\int_0^\infty \int_{N}\Big| \frac{\N u(x,\rho)}{u(x,\rho)}-\frac{\N w(x,\rho)}{w(x,\rho)} \Big|^2 \rho^{1-2s}u^2(x,\rho)\,dx\, d\rho\\+\frac{2^{1-2s}\Gamma(1-s)}{\Gamma(s)}\int_{N} \frac{\mathcal{L}_s\varphi(x)}{\varphi(x)}u^2(x,0)\,dx
=\int_0^\infty \int_{N}\Big| \frac{\N u(x,\rho)}{u(x,\rho)}\Big|^2 \rho^{1-2s}u^2(x,\rho)\,dx\, d\rho.
\end{multline} 
Now, let $\eta\in C_0^{\infty}$ supported in $0< a \le |(v,z)|\le b<\infty$ be such that $0\le \eta \le 1$. Let us take $\varphi=\eta\varphi_s\ast \varphi_{-s,\delta}$ in \eqref{eq:key2}. We note that, in view of Theorem \ref{thm:CH}, 
\begin{multline*}
\frac{2^{1-2s}\Gamma(1-s)}{\Gamma(s)}\int_{N} \frac{\mathcal{L}_s\varphi(x)}{\varphi(x)}u^2(x,0)\,dx\\
=\frac{2^{1-2s}\Gamma(1-s)}{\Gamma(s)}C_2(n,m,s)\int_N\frac{\delta^{2s}(\eta\varphi_s)\ast \varphi_{s,\delta}(v,z)}{\eta\varphi_s\ast \varphi_{-s,\delta}(v,z)}u^2(v,z,0)\,dv\,dz.
\end{multline*}

From identity \eqref{eq:key2} we observe that
\begin{multline*}
\frac{2^{1-2s}\Gamma(1-s)}{\Gamma(s)}C_2(n,m,s)\int_N\frac{\delta^{2s}(\eta\varphi_s)\ast \varphi_{s,\delta}(v,z)}{\eta\varphi_s\ast \varphi_{-s,\delta}(v,z)}u^2(v,z,0)\,dv\,dz\\
\le \int_0^\infty \int_{N}\Big| \N u(x,\rho)\Big|^2 \rho^{1-2s}\,dx\, d\rho.
\end{multline*}
As $\eta\varphi_s\in L^2(N)$ and $C_1(n,m,s)\delta^{2s}\varphi_{s,\delta}$ is an approximate identity, $\delta^{2s}\eta\varphi_s\ast \varphi_{s,\delta}$ converges to $C_1(n,m,s)^{-1}\eta \varphi_s(v,z)$ a.e. as $\delta\to 0$. We also note that $\eta\varphi_s\ast \varphi_{-s,\delta}(v,z)$ converges to $\eta\varphi_s\ast |\cdot|^{-Q+2s}$ (due to Dominated Convergence Theorem). By Vitali's theorem (see \cite[p. 143]{Ru}) with $d\mu(v,z)=u^2(v,z,0)\,dv\,dz$ we conclude that 
\begin{multline*}
\lim_{\delta\to0}\frac{2^{1-2s}\Gamma(1-s)}{\Gamma(s)}C_2(n,m,s)\int_N\frac{\delta^{2s}(\eta\varphi_s)\ast \varphi_{s,\delta}(v,z)}{\eta\varphi_s\ast \varphi_{-s,\delta}(v,z)}u^2(v,z,0)\,dv\,dz\\
=\frac{2^{1-2s}\Gamma(1-s)}{\Gamma(s)}C_2(n,m,s)\int_N\frac{\eta\varphi_s(v,z)}{C_1(n,m,s)\eta\varphi_s\ast |\cdot|^{-Q+2s}(v,z)}u^2(v,z,0)\,dv\,dz.
\end{multline*}
We also have, with $w_{\eta,\delta}(v,z,\rho)=C_1(n,m,s)\rho^{2s}\eta\varphi_s\ast \varphi_{-s,\delta}\ast \varphi_{s,\rho}(v,z)$,
\begin{multline*}
\int_0^\infty \int_{N}\Big| \frac{\N u(x,\rho)}{u(x,\rho)}-\frac{\N w_{\eta,\delta}(x,\rho)}{w_{\eta,\delta}(x,\rho)} \Big|^2 \rho^{1-2s}u^2(x,\rho)\,dx\, d\rho\\
\le \int_0^\infty \int_{N}\Big| \frac{\N u(x,\rho)}{u(x,\rho)}\Big|^2 \rho^{1-2s}u^2(x,\rho)\,dx\, d\rho.
\end{multline*}
Note that, as $\delta\to 0$, $w_{\eta,\delta}$ converges to 
$$
w_{\eta}(v,z,\rho)=C_1(n,m,s)\rho^{2s}\eta \varphi_s\ast |\cdot|^{-Q+2s}\ast \varphi_{s,\rho}(v,z)
$$ 
and $\N w_{\eta,\delta}\to \N w_{\eta}$. Again, by Vitali's theorem with $d\mu(v,z,\rho)=u^2(v,z,\rho)\rho^{1-2s}\,dv\,dz\,d\rho$, we can conclude that
\begin{multline*}
\lim_{\delta\to0}\int_0^\infty \int_{N}\Big| \frac{\N u(x,\rho)}{u(x,\rho)}-\frac{\N w_{\eta,\delta}(x,\rho)}{w_{\eta,\delta}(x,\rho)} \Big|^2 \rho^{1-2s}u^2(x,\rho)\,dx\, d\rho\\
=\int_0^\infty \int_{N}\Big| \frac{\N u(x,\rho)}{u(x,\rho)}-\frac{\N w_{\eta}(x,\rho)}{w_{\eta}(x,\rho)} \Big|^2 \rho^{1-2s}u^2(x,\rho)\,dx\, d\rho. 
\end{multline*}
Putting together, we have obtained
\begin{multline*}
\int_0^\infty \int_{N}\Big| \N u(x,\rho)\Big|^2 \rho^{1-2s}\,dx\, d\rho
=\int_0^\infty \int_{N}\Big| \frac{\N u(x,\rho)}{u(x,\rho)}-\frac{\N w_{\eta}(x,\rho)}{w_{\eta}(x,\rho)} \Big|^2 \rho^{1-2s}u^2(x,\rho)\,dx\, d\rho\\
+\frac{2^{1-2s}\Gamma(1-s)}{\Gamma(s)}C_2(n,m,s)\int_N\frac{\eta\varphi_s(v,z)}{C_1(n,m,s)\eta\varphi_s\ast |\cdot|^{-Q+2s}(v,z)}u^2(v,z,0)\,dv\,dz.
\end{multline*}
Choosing a sequence $\eta=\eta_k$ of functions supported on $\frac{1}{2k}\le |(v,z)|\le 2k$ which are equal to $1$ on $\frac{1}{k}\le |(v,z)|\le k$, arguing as above, we can take limit as $k\to \infty$ in the above to get
\begin{multline}
\label{eq:ideC}
\int_0^\infty \int_{N}\Big| \N u(x,\rho)\Big|^2 \rho^{1-2s}\,dx\, d\rho
=\int_0^\infty \int_{N}\Big| \frac{\N u(x,\rho)}{u(x,\rho)}-\frac{\N w(x,\rho)}{w(x,\rho)} \Big|^2 \rho^{1-2s}u^2(x,\rho)\,dx\, d\rho\\
+\frac{2^{1-2s}\Gamma(1-s)}{\Gamma(s)}C_2(n,m,s)\int_N u^2(v,z,0)w_s(v,z)\,dv\,dz.
\end{multline}

Let us take now $u\in \widetilde{W}_0^{s,2}(S)$. Choose a sequence $u_k\in C_0^{\infty}(N\times \R)$ such that $u_k$ converges to $u$ in $\widetilde{W}_0^{s,2}(S)$. It is clear that, passing to the limit in \eqref{eq:ideC} we get the identity for functions in $\widetilde{W}_0^{s,2}(S)$. From \eqref{eq:ideC} we deduce immediately the inequality stated in the theorem.

The equality is obtained if and only if $\big| \frac{\N u(x,\rho)}{u(x,\rho)}-\frac{\N w(x,\rho)}{w(x,\rho)} \big|=0$, i.e., if and only if $u=c\cdot w$, for some positive constant $c$, with $w=C_1(n,m,s)\rho^{2s}\varphi_s\ast |\cdot|^{-Q+2s}\ast \varphi_{s,\rho}(v,z)$. But this $w$ makes the rest of the terms of \eqref{eq:ideC} infinite. 
\end{proof}

\begin{proof}[Proof of Corollary \ref{thm:Hardyhom}]

Take $u(v,z,\rho)=C_1(n,m,s)\rho^{2s}f\ast \varphi_{s,\rho}(v,z,\rho)$. By Theorem \ref{thm:sol}, $u$ solves the extension problem \eqref{eq:ep} with initial condition $\varphi$ so that, by Theorem \ref{thm:GtHi}, we have the identity
$$
\int_0^\infty\int_{N}\big| \N u(x, \rho)\big|^2 \rho^{1-2s}\,dx d\rho=2^{1-2s}\frac{ \Gamma(1-s)}{\Gamma(s)}\int_Nf(x) \mathcal{L}_sf(x)\,dx.
$$
Then, applying Theorem \ref{thm:tHihomo}, we get 
$$
\int_Nf(x) \mathcal{L}_s(x)\,dx\ge  C_2(n,m,s)\int_N f^2(v,z)w_s(v,z)\,dv\,dz,
$$
and the conclusion follows.
\end{proof}

\begin{rem}
It would be interesting to get an inequality of the form
$$
\int_0^\infty\int_{N}\big| \N u(x, \rho)\big|^2 \rho^{1-2s}\,dx d\rho\ge C(n,m,s)\int_Nu^2(x,0)|x|^{-2s}\,dx,
$$
with sharp constant. Unfortunately, the weight function $w_s(x)$, though it has the right homogeneity, does not simplify to yield $|x|^{-2s}$, even in the case of the Heisenberg group. Recall that in the Euclidean setting 
$$
w_s(x)=\frac{|x|^{-\frac{n+s}{2}}}{|x|^{-\frac{n+s}{2}}\ast |x|^{-n+s}}=C(n,s)|x|^{-2s},
$$
with a precise constant $C(n,s)$ yielding the sharp Hardy inequality.

In the case of the Heisenberg group, we have 
$$
\psi_s(z,t)=\int_{\H^n}|(\zeta,\tau)|^{-(n+1+s)}|(z,t)(\zeta,\tau)^{-1}|^{-Q+2s}\,d\zeta\,d\tau=|(z,t)|^{-(n+1-s)}\psi_s(z',t'),
$$  
where $ |z'|^4+t'^2 = 1.$  Using Cayley transform (see for instance \cite{STU}), it is easy to see that the above integral is equal to a constant times
$$  ((1+|z'|^2)^2+t'^2)^{-\lambda / 4}  \int_{S^{2n+1}} |1-\zeta \cdot \bar{\eta}|^{-\lambda /2} |(1-\zeta_{n+1})(1+\zeta_{n+1})|^{-\gamma /2} d\zeta $$  where
$ \lambda = 2(n+1)-2s, \gamma = n+1+s$  and $ \eta $ is the Cayley transform of $ (z',t').$ A sharp  upper bound for the above integral would lead to a Hardy inequality with homogeneous weight $ |(z,t)|^{-2s}$.
\end{rem}

\begin{rem}
In Theorem \ref{thm:tHihomo}, the weight function $w_s(x)$ is optimal in the following sense: If $\widetilde{w}_s\ge w_s$ is another weight function for which
$$
\int_0^{\infty}\int_N \big| \N u(x, \rho)\big|^2 \rho^{1-2s}\,dx d\rho\ge 2^{1-2s}\frac{\Gamma(1-s)}{\Gamma(s)}C_2(n,m,s)\int_Nu^2(x,0)\widetilde{w}_s(x)\,dx
$$
then $\widetilde{w}_s=w_s$.

To see this, consider
\begin{multline*}
2^{1-2s}\frac{\Gamma(1-s)}{\Gamma(s)}C_2(n,m,s)\int_N(\widetilde{w}_s(x)-w_s(x))u^2(x,0)\,dx\\
\le \int_0^{\infty}\int_N \big| \N u(x, \rho)\big|^2 \rho^{1-2s}\,dx d\rho-2^{1-2s}\frac{\Gamma(1-s)}{\Gamma(s)}C_2(n,m,s)\int_Nw_s(x)u^2(x,0)\,dx.
\end{multline*}
In view of the identity \eqref{eq:ideC}, we have
\begin{equation}
\label{eq:ideo}
\int_N(\widetilde{w}_s(x)-w_s(x))u^2(x,0)\,dx \le \int_0^\infty \int_{N}\Big| \frac{\N u(x,\rho)}{u(x,\rho)}-\frac{\N w(x,\rho)}{w(x,\rho)} \Big|^2 \rho^{1-2s}u^2(x,\rho)\,dx\, d\rho
\end{equation}
where 
$$
w(x,\rho)=C_1(n,m,s)\rho^{2s}\varphi_s\ast |\cdot|^{-Q+2s}\ast \varphi_{s,\rho}(x).
$$ 
Let  $\psi_s=C_1(n,m,s)\varphi_s\ast |\cdot|^{-Q+2s}$, as in the proof of Theorem \ref{thm:tHihomo}. It is clear that $\psi_s(x)\le C|x|^{-\frac{Q}{2}+s}$. By defining $\psi_s^{(1)}=\psi_s\chi_{|x|\le 1}$ and $\psi_s^{(2)}=\psi_s\chi_{|x|> 1}$, we see that $\psi_s=\psi_s^{(1)}+\psi_s^{(2)}\in L^1(N)+L^p(N)$ for $p>\frac{2Q}{Q-2s}$. Consequently, $C_1(n,m,s)\rho^{2s}\psi_s\ast \varphi_{s,\rho}(x)\to \psi_s(x)$ for a.e. $x$ and also in $L^1(N)+L^p(N)$. Let 
$$
u_k(x,\rho)=C_1(n,m,s)\eta_k\rho^{2s}[\psi_s\ast \varphi_{s,\rho}](x),
$$
with $\eta_k$ as in the proof of Theorem \ref{thm:tHihomo}. Then, $u_k(x,0)=\eta_k\psi_s(x)\to \psi_s(x)$ as $k\to \infty$.

Let us now take $u=u_k$ in the inequality \eqref{eq:ideo}. As $k\to \infty$ the left hand side converges to $\int_N(\widetilde{w}_s(x)-w_s(x))\psi_s(x)\,dx$. On the other hand, as $u_k(x,\rho)=\eta_kw(x,\rho)$ it follows that $\big| \N u_k(x,\rho)-\eta_k\N(x,\rho)\big|^2\to 0$ as $k\to \infty$. Consequently, the right hand side tends to $0$, proving $\widetilde{w}_s= w_s$, as $\psi_s(x)>0$.
\end{rem}
\subsubsection*{Acknowledgments.}

This research was partially supported through the program ``Research in Pairs'' by the Mathematisches Forschungsintitut Oberwolfach in 2017. The first author wishes to thank \'Oscar Ciaurri and Pablo R. Stinga for
introducing her to Hardy inequalities and for many fruitful
discussions.

\end{document}